\theoremstyle{plain}
 \newtheorem{theorem}{Theorem}[section]
 \newtheorem{lemma}[theorem]{Lemma}
 \newtheorem{corollary}[theorem]{Corollary}
 \newtheorem{proposition}[theorem]{Proposition}
 \newtheorem{example}[theorem]{Example}
\theoremstyle{remark}
\newtheorem{remark}[theorem]{Remark}
 \newtheorem{condition}[theorem]{Condition}
 \def\beqlb{\begin{eqnarray}}\def\eeqlb{\end{eqnarray}}
 \def\beqnn{\begin{eqnarray*}}\def\eeqnn{\end{eqnarray*}}
 \def\<{\langle}\def\>{\rangle}
 \def\eqref#1{{\rm(\ref{#1})}}
 \def\qed{\hfill$\Box$\medskip}
\def\e{{\mbox{\rm e}}}
\def\<{\left<}\def\>{\right>}
\newcommand{\dd}{\mathrm{d}}
\font\tenmsbm=msbm10\textfont
\font\sevenmsbm=msbm7
\def\<{\left<}\def\>{\right>}
\def\({\left(}\def\){\right)}
\title[Ancestral lineages in  subcritical continuous-state branching populations]{ \bf Asymptotic behaviour of ancestral lineages in subcritical continuous-state branching populations}
\newcommand{\ddr}{\mathrm{d}}
\keywords{{branching processes}, {continuous-state space}, {inverse subordinators}, {ancestral lineage}, {almost sure asymptotics}, {Siegmund dual}, {invariant function}, {invariant measure}, {stochastic flows}.}
\date{December 7, 2020} \subjclass[2010]{60J80, 60J70, 92D25}
\begin{document}
\maketitle

\centerline{\large  Cl\'ement Foucart \footnote{Universit\'e  Sorbonne Paris Nord and Paris 8, Laboratoire Analyse, G\'eom\'etrie $\&$ Applications, UMR 7539. Institut Galil\'ee, 99 avenue J.B. Cl\'ement, 93430 Villetaneuse, France; foucart@math.univ-paris13.fr} and Martin  M\"ohle \footnote{
Eberhard Karls Universit\"at Tübingen, Auf der Morgenstelle 10, 72076 T\"ubingen, Germany; martin.moehle@uni-tuebingen.de.}}
%\date{December 7, 2020}
\begin{abstract}
Consider the population model with infinite size associated to subcritical continuous-state branching processes (CSBP). Individuals reproduce independently according to the same subcritical offspring distribution. We study the long-term behaviour of the ancestral lineages as time goes to the past and show that the flow of ancestral lineages, properly renormalized, converges almost surely to the inverse of a drift-free subordinator whose Laplace exponent is explicit in terms of the branching mechanism. We provide an interpretation in terms of the genealogy of the population. In particular,  we show that the inverse subordinator is partitioning the current population  into ancestral families with distinct common ancestors.  
When Grey’s condition is satisfied, the population comes from a discrete set of ancestors and the  ancestral families are i.i.d and distributed according to the quasi-stationary distribution of the CSBP conditioned on non-extinction. When Grey’s condition is not satisfied, the population comes from a continuum of ancestors which is described as the set of increase points $\mathscr{S}$ of the limiting inverse subordinator. The Hausdorff dimension of $\mathscr{S}$ is given. The proof is based on a general result for stochastically monotone processes of independent interest, which relates $\theta$-invariant measures and $\theta$-invariant functions for a process and its Siegmund dual. 
\end{abstract}
\section{Introduction} 
Continuous-state branching processes (CSBPs) are positive Markov processes satisfying the branching property. They arise as scaling limits of Galton-Watson processes and form a  fundamental class of random population models. Their longterm behavior has received a great deal of attention since the seventies. We refer to the early works of Bingham \cite{MR0410961} and Grey \cite{MR0408016}.  In the seminal work \cite{MR1771663}, Bertoin and Le Gall showed how to encode a complete genealogy of a random branching population by considering a flow of subordinators $(X_{s,t}(x), s\leq t, x\geq 0).$

In this representation the population has an infinite size at all times and all individuals have arbitrarily old ancestors. More precisely, individuals at time $s$ with descendants at time $t$ are the locations of the jumps of the subordinator $x\mapsto X_{s,t}(x)$ and the descendants at time $t$ of the individuals in the population at time $s$ are represented by the jump intervals. We shall provide more background on the flow of subordinators in the sequel.

Most works on CSBPs focus on their long-term behaviour forward in time. We refer to Bertoin et al. \cite{MR2455180}, Duquesne and Labb\'{e} \cite{MR3164759}, Labb\'{e} \cite{labbe2012genealogy} and Foucart and Ma \cite{foucart2019} for studies in the framework of the flow of subordinators.  
However, the representation of the population model through $(X_{s,t}(x),s\leq t, x\geq 0)$ allows one to follow the \textit{ancestral lineages backward in time}. In this article, we are interested in the backward genealogy of the continuous population and how it behaves on the long-term. To the best of our knowledge, fewer works on CSBPs have been done in this direction. We refer however to Labb\'{e} \cite{labbe2012genealogy}, Lambert \cite{MR2014270}, Lambert and Popovic \cite{MR3059232} and Foucart et al. \cite{FoMaMa2019}. The latter work initiates the study of the inverse flow $(\hat{X}_{s,t}(x), s\leq t, x\geq 0)$
defined for $s\leq t$ and $x\in [0,\infty]$, as
\begin{equation}\label{dualflowdef} \hat{X}_{s,t}(x):=\inf \{y\geq 0:\ X_{-t,-s}(y)> x\}.
\end{equation}
This random variable represents the ancestor at time $-t$ of the individual $x$ in the population at time $-s$. From now on consider an arrow of time pointing to the past, and call $\hat{X}_t(x):=\hat{X}_{0,t}(x)$, the ancestor at time $t\geq 0$ (backwards) of the individual $x$ of the population at time $0$. The two-parameter flow $(\hat{X}_t(x),t\geq 0, x\geq 0)$ is therefore representing the ancestral lineages of  the individuals in the current population. 
We call $(\hat{X}_t(x),t\geq 0)$ the \textit{ancestral lineage process}. This is a Feller process with no positive jumps, namely  $\mathbb{P}\big(\sup_{t>0}(\hat{X}_{t}(x)-\hat{X}_{t-}(x))>0\big)=0$. Moreover, for any $x\neq y$, whenever $(\hat{X}_t(x),t\geq 0)$ and $(\hat{X}_t(y),t\geq 0)$ cross, they coalesce and such a coalescence represents the occurrence in the past of a common ancestor of the individuals $x$ and $y$. We stress that coalescence can be multiple in the sense that more than two lineages can coalesce at the same time. We refer to \cite{FoMaMa2019} for a study of the coalescent processes embedded in the flow  $(\hat{X}_t(x),t\geq 0, x\geq 0)$. 
%
%For any fixed $x\in [0,\infty]$, the Markov process $(\hat{X}_t(x),t\geq 0)$ corresponds to the so-called Siegmund dual of the branching process $(X_t(x),t\geq 0)$. It satisfies the duality relation
%\begin{equation}\label{duality} \mathbb{P}(\hat{X}_t(x)\geq y)=\mathbb{P}(x\leq X_t(y)).\end{equation}
%%To the best of our knowledge, only a few attention has been paid to the dual process $(\hat{X}_t(x),t\geq 0)$.  
%It has been previously studied in discrete-state space, but without interpretation in terms of ancestral lineages, by Pakes et al. \cite{MR2409319} and Pakes \cite{pakes2017}. 
%In the continuous-state space, a characterization of its semigroup, its generator and transience/recurrence properties  have been obtained in \cite{FoMaMa2019}. 
%It is also worth noticing that such a process naturally occurs in the study of the number of blocks in a Bolthausen-Sznitman coalescent see M\"ohle \cite{MR3333734}, Kukla and M\"ohle \cite{MR3758343}. 
%Recently, it has been remarked in Foucart et al. \cite{} that the Siegmund dual of a branching process has a direct interpretation in terms of genealogy   

We focus on subcritical CSBPs. 
%The main purpose is to study the ancestral lineages of the individuals standing at some initial time,  as time goes to $\infty$ into the past. 
In such a setting, it is known, see  \cite[Proposition 2.8]{FoMaMa2019}, that for all $x\in (0,\infty)$, the Markov process $(\hat{X}_t(x),t\geq 0)$ is transient. We shall find an almost sure renormalisation of $(\hat{X}_t(x),t\geq 0)$ as $t$ goes to $\infty$ and study the limiting process in the variable $x$. This leads to an almost sure description of the long-term behaviour of the ancestral lineages  for all subcritical CSBPs, including those not satisfying Grey's condition, see Section \ref{background} and Theorem \ref{mainthm}. 
%It completes some preliminary results established in \cite[Proposition 2.1 and Theorem 4.24]{FoMaMa2019} when Grey's condition holds (namely when families get extinct in finite time almost surely).  

%Further background on continuous-state branching processes (CSBPs for short) and the representation in terms of flow of subordinators, is provided in Section \ref{background}. 

The paper is organised as follows. Further background on CSBPs and their representation in terms of flow of subordinators are provided in Section \ref{background}. 
%We give in Section \ref{background} backgrounds on continuous-state branching processes and briefly explain the flow representation of the associated continuous population. 
Fundamental properties of the ancestral lineage process $(\hat{X}_t(x),t\geq 0)$, such as its Siegmund duality relation with the CSBP  $(X_t(x),t\geq 0)$ and the representation of its semigroup, are also recalled. Our main results are stated in Section \ref{results} and proven in Section \ref{proofs}. The proof is based on a general result, established in Theorem \ref{invariantfunction}, for stochastically monotone Markov processes by showing how to link (infinite) $\theta$-invariant measures of a process $(X_t,t\geq 0)$ with (increasing) $\theta$-invariant functions of its Siegmund dual process $(\hat{X}_t,t\geq 0)$. We apply this result in the setting of CSBPs. 
%First, we show an almost sure convergence for the process $(\hat{X}_t(x),t\geq 0)$ started from a fixed value $x$, see Lemma \ref{asconvlambda}. In a second time, we study the associated limiting process in $x$ and show that it satisfies the properties stated in Theorem \ref{mainthm}. We will see from the proof of Theorem \ref{mainthm} that Proposition \ref{ancestralpartition} holds true.
%Proposition \ref{dimension} will be a consequence of Theorem \ref{mainthm}.
\section{Background on CSBPs and the flow of subordinators}\label{background}
We first recall basic definitions and properties of CSBPs and their representation in terms of flows. These processes are continuous time and continuous space analogue of Galton-Watson Markov chains. They have been introduced by Lamperti \cite{MR0208685} and Ji{\v{r}}ina \cite{MR0101554}. CSBPs are positive Markov processes satisfying the branching property: for any $x,y\geq 0$ and fixed time $t\geq 0$,
\begin{equation}\label{branching} X_t(x+y)=X'_t(x)+X''_t(y),\end{equation}
where $(X_t(x+y),t\geq 0)$ is a CSBP started from $x+y$, and $(X'_t(x),t\geq 0)$ and $(X''_t(y),t\geq 0)$ are two independent copies of the process started respectively from $x$ and $y$. We refer the reader to \cite[Chapter 3]{MR2760602} for an introduction to CSBPs. Denote by $\mathcal{L}$ the generator of  $(X_t(x),t\geq 0)$. For any $q\geq 0$, set $e_q(x):=e^{-qx}$ for any $x\geq 0$. The operator $\mathcal{L}$ acts on the exponential functions as follows. For any $q\geq 0$, and $x\in [0,\infty)$
\begin{equation}\label{generatoronexponential}\mathcal{L}e_q(x)=\Psi(q)xe_q(x),
\end{equation}
where $\Psi$ is a L\'{e}vy-Khintchine function and is called the branching mechanism. We refer e.g. to Silverstein \cite{MR0226734}.  The linear span of exponential functions $A:=\mathrm{Span}(\{e_q,q\in [0,\infty)\})$ is a core for generator $\mathcal{L}$.

We shall merely be interested in subcritical CSBPs for which $\Psi$  is of the form
\begin{equation}\label{branchingmechanism} \Psi(u)=\frac{\sigma^2}{2}u^2+\gamma u +\int_{0}^{\infty}(e^{-ux}-1+ux)\pi(\ddr x) \text{ for all } u\geq 0,
\end{equation}
%with $\sigma\geq 0$, $\gamma=\Psi(0+)>0$ and $\pi$ such that $\int_0^{\infty}(x\wedge x^{2})\pi(\ddr x)<\infty$.  We discard in the sequel the trivial deterministic case for which the branching mechanism $\Psi$ is linear. Namely, 
where $\gamma=\Psi'(0+)>0$, $\sigma\geq 0$, and $\pi$ is a L\'{e}vy measure, i.e. a Borel measure such that $\int_{0}^{\infty}(x\wedge x^2)\pi(\ddr x)<\infty$. We assume that either $\pi \neq 0$ or $\sigma>0$, so that $\Psi$ is not linear.
%and to Bertoin and Le Gall \cite{MR1771663} and \cite[Section 1]{FoMaMa2019} for the definition of the associated flow of subordinators.
%of CSBPs $(X_{s,t}(x),t\geq s, x\geq 0)$ 
%with mechanism $\Psi$ 
The semigroup of $(X_t(x),t\geq 0)$ satisfies for any $\lambda\in (0,\infty)$, $t\geq 0$ and $x\in [0,\infty)$
\begin{equation}\label{semigroupX}
\mathbb{E}[e^{-\lambda X_t(x)}]=e^{-xv_t(\lambda)} %\text{ with }
%v_t(\lambda) \text{ solution to the equation } \int_{v_t(\lambda)}^{\lambda}\frac{\ddr u}{\Psi(u)}=t.
\end{equation}
with $t\mapsto v_t(\lambda)$ for any $\lambda\in (0,\infty)$, as the solution to the integral equation 
\begin{equation}\label{v}\int_{v_t(\lambda)}^{\lambda}\frac{\ddr u}{\Psi(u)}=t.
\end{equation}
Note that $t\mapsto v_t(\lambda)$ solves $\frac{\ddr }{\ddr t}v_t(\lambda)=-\Psi(v_t(\lambda))$ with $v_0(\lambda)=\lambda$.
\noindent As a first consequence of \eqref{semigroupX}, the process $(X_t(x),t\geq 0)$ gets extinct at time $t$ with probability $e^{-xv_t(\infty)}$ where $v_t(\infty):=\underset{\lambda \rightarrow \infty}{\lim} v_t(\lambda)\in (0,\infty]$. The latter is finite if and only if $\Psi$ satisfies Grey's condition 
\begin{equation}\label{Greycondition}
\int^{\infty}\frac{\dd u}{\Psi(u)}<\infty.
\end{equation}

Lambert \cite{MR2299923} and Li \cite{MR1727226} have studied the subcritical process $(X_t(x),t\geq 0)$ conditioned on non-extinction and established the following weak convergence when \eqref{Greycondition} holds: \[\mathbb{P}(X_t(x)\in \cdot \ |X_t(x)>0)\underset{t\rightarrow \infty}{\longrightarrow} \nu_{\infty}(\cdot),\]
where $\nu_{\infty}$, the so-called quasi-stationary distribution of the CSBP, has Laplace transform
\begin{equation}\label{qsd}\int_{0}^{\infty}e^{-q x}\nu_{\infty}(\ddr x)=1-\kappa_{\infty}(q):=1-e^{-\Psi'(0+)\int_{q}^{\infty}\frac{\ddr u}{\Psi(u)}}, \ q\geq 0.
\end{equation}

The branching property \eqref{branching} can be translated in terms of independence and stationarity of the increments of the process $(X_t(x),x\geq 0)$ for any fixed time $t$. The latter is therefore a subordinator and according to \eqref{semigroupX}, its Laplace exponent is $\lambda\mapsto v_t(\lambda)$.
%, which takes thus the form
%\[v_t(\lambda)=d_t\lambda+\int_{0}^{\infty}(1-e^{-qx})\ell_t(\ddr x),
%with d_t=e^{-Dt}.
Starting from this observation, Bertoin and Le Gall in \cite{MR1771663} showed that a complete population model can be associated to CSBPs through a flow of subordinators $(X_{s,t}(x), s \leq t, x \geq 0)$. 
%\begin{definition}
%\label{flowdef2}

More precisely, the collection of processes $(X_{s,t}(x), s \leq t, x \geq 0)$ is satisfying the following properties:
\begin{enumerate}
  \item For every $s \leq t$, $x \mapsto X_{s,t}(x)$ is a càdlàg subordinator with Laplace exponent $\lambda\mapsto v_{t-s}(\lambda)$.
  \item For every $t \in \mathbb{R}$, $(X_{r,s}, r \leq s \leq t)$ and $(X_{r,s}, t\leq r \leq s)$ are independent.
  \item For every $r \leq s \leq t$, $X_{r,t} = X_{s,t} \circ X_{r,s}$.
 % \item For every $s \in \mathbb{R}$ and $x \geq 0$, we have $X_{s,s}(x) = x = \lim_{t \to s} X_{s,t}(x)$ in probability.
\end{enumerate}
%\end{definition}
The two-parameter flow $(X_{t}(x),x\geq 0, t\geq 0):=(X_{0,t}(x),x\geq 0, t\geq 0)$ is a flow of CSBPs with branching mechanism $\Psi$, each starting from an initial population of arbitrarily large size $x$. The three-parameter flow above provides a complete genealogy of the underlying (infinite) population: let $y\in (0,\infty)$, if $X_{s,t}(y-)<X_{s,t}(y)$, then the individual $y$ at time $s$ has descendants at time $t$ and those are represented by the interval $(X_{s,t}(y-),X_{s,t}(y)]$; see Figure \ref{flowrepre}.

\begin{figure}[!ht]
\centering \noindent
\includegraphics[height=.22 \textheight]{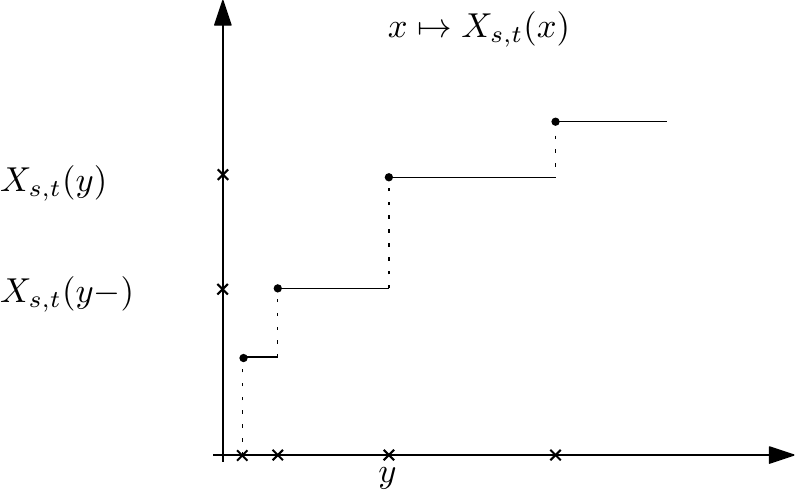}
\caption{Symbolic representation of the genealogy forward in time}
\label{flowrepre}
\end{figure}

We work now on the probability space on which the flow $(X_{s,t}(x),  s\leq t, x\geq 0)$ is defined. Recall the definition \eqref{dualflowdef} of the inverse flow and set 
$$(\hat{X}_{t}(x),t\geq 0):=(\hat{X}_{0,t}(x),t\geq 0).$$
%For any fixed $x\in [0,\infty]$, the Markov process $(\hat{X}_t(x),t\geq 0)$ corresponds to the so-called \textit{Siegmund dual} of the branching process $(X_t(x),t\geq 0)$. The following duality relationship is fundamental in our study: for any $t$ and $x,y$
%\begin{equation}\label{dualitycsbp}\mathbb{P}\big(\hat{X}_t(x)<y\big)=\mathbb{P}\big( x<X_t(y)\big).
%\end{equation}
%It satisfies the duality relation
%\begin{equation}\label{duality} \mathbb{P}(\hat{X}_t(x)\geq y)=\mathbb{P}(X_t(y)\leq x), \text{ for any }x,y\in [0,\infty).\end{equation}  
We summarize here fundamental properties of the ancestral lineage process $(\hat{X}_t,t\geq 0)$, see \cite[Section 3]{FoMaMa2019}. We stress that here, apart if this is explicitely mentioned, the branching mechanism $\Psi$ is  general  and not assumed to be subcritical. The process $(\hat{X}_t(x),t\geq 0)$, started from $x>0$, is a non-explosive càdlàg Feller process with no positive jumps.
%; $\mathbb{P}\big(\sup_{t>0}(\hat{X}_{t}(x)-\hat{X}_{t-}(x))>0\big)=0$. 
Property (3) of the flow of subordinators $(X_{s,t}(x),s\leq t, x\geq 0)$ entails that for all $r\leq s \leq t$, 
\begin{center}
$\hat{X}_{r,t}=\hat{X}_{s,t}\circ \hat{X}_{r,s}$ a.s. 
\end{center}
This entails that the flow of processes $\big(\hat{X}_{t}(x),t\geq 0,x\in (0,\infty)\big)$
%This process represents the ancestral lineage of the individual $x$ alive at time $0$ when time runs in the past. 
is \textit{coalescing}, in the sense that, when two ancestral lineages $(\hat{X}_t(x),t\geq 0)$ and $(\hat{X}_t(y),t\geq 0)$ cross, they merge. Such a coalescence represents the occurrence in the past of a common ancestor of the individuals $x$ and $y$. 
We refer the  reader to \cite[Section 5.2]{FoMaMa2019} for a study of coalescent processes  embedded  in the flow $(\hat{X}_{s,t}, s\leq t)$ . 

The processes $X$ and $\hat{X}$ are linked through the following duality relations, see \cite[Lemma 3.3-(1) and Equation (3.5), Section 3]{FoMaMa2019}. For any $t\geq 0$ and $x,y\in (0,\infty)$
\begin{equation}\label{dualitycsbp0}
\{\hat{X}_t(x)<y\}=\{x<X_{-t,0}(y)\} \text{ almost surely.}
\end{equation}
In particular, since for any $t\geq 0$, $X_{-t,0}(y)$ has the same law as $X_{t}(y)$,
%\[
\begin{equation}\label{dualitycsbp}
\mathbb{P}\big(\hat{X}_t(x)<y\big)=\mathbb{P}\big( x<X_t(y)\big).
\end{equation}
%\]
The following other duality relations will be more convenient to work with. It also allows one to identify the Markov process $(\hat{X}_t(x),t\geq 0)$ as the \textit{Siegmund dual} of $(X_t(x),t\geq 0)$, see the forthcoming Section \ref{generalresult}.
\begin{lemma}\label{dualitylemma}
For any $t\geq 0$ and $x,y\in (0,\infty)$
\begin{equation}\label{dualitycsbp3}
\{\hat{X}_t(x)>y\}=\{x>X_{-t,0}(y)\} \text{ almost surely,}
\end{equation}
and 
\begin{equation}\label{dualitycsbp2} \mathbb{P}\big(\hat{X}_t(x)>y\big)=\mathbb{P}\big( x>X_t(y)\big).
\end{equation}
\end{lemma}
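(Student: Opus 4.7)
The plan is to derive \eqref{dualitycsbp3} from the already-established \eqref{dualitycsbp0} via a pathwise sandwich argument, and then to obtain \eqref{dualitycsbp2} by passing to probabilities and invoking the stationarity in law of the CSBP flow, $X_{-t,0}(y)\stackrel{d}{=}X_t(y)$.

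First I would establish two pathwise inclusions using only the defining formula $\hat X_t(x)=\inf\{z\geq 0:X_{-t,0}(z)>x\}$ together with the c\`adl\`ag, non-decreasing character of $z\mapsto X_{-t,0}(z)$. On one hand, if $X_{-t,0}(y)<x$ then right-continuity of the subordinator yields a $\delta>0$ with $X_{-t,0}(y+\delta)<x$, so $y+\delta\notin\{z:X_{-t,0}(z)>x\}$ and hence $\hat X_t(x)\geq y+\delta>y$. On the other hand, if $\hat X_t(x)>y$ then $X_{-t,0}(z)\leq x$ for all $z\leq y$, and by monotonicity $X_{-t,0}(y)\leq x$. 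These give the pathwise sandwich
\[
\{X_{-t,0}(y)<x\}\ \subseteq\ \{\hat X_t(x)>y\}\ \subseteq\ \{X_{-t,0}(y)\leq x\}.
\]

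The two outer events differ by the event $\{X_{-t,0}(y)=x\}$, which I would argue has probability $0$. The law of $X_{-t,0}(y)$ at fixed $y>0$ is infinitely divisible with Laplace exponent $v_t$, and the standing non-linearity assumption on $\Psi$ (so $\sigma>0$ or $\pi\neq 0$) yields via the flow equation $\partial_t v_t=-\Psi(v_t)$ that the subordinator $z\mapsto X_{-t,0}(z)$ is non-degenerate with distributions absolutely continuous on $(0,\infty)$. With this atom-free property the sandwich collapses to the a.s.\ equality \eqref{dualitycsbp3}, and \eqref{dualitycsbp2} follows by taking probabilities in \eqref{dualitycsbp3} and substituting $X_{-t,0}(y)\stackrel{d}{=}X_t(y)$.

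The main obstacle will be rigorously justifying the atom-free property of the one-dimensional law of $X_{-t,0}(y)$. Although intuitive, ruling out atoms on $(0,\infty)$ for a general CSBP subordinator requires a short structural analysis of the L\'evy triplet of $v_t$, separating the Grey and non-Grey cases (positive drift or infinite activity in the latter, absolutely continuous jump measure after a compound-Poisson decomposition in the former). Alternatively one can sidestep this issue entirely by first obtaining \eqref{dualitycsbp2} as the probability-level Siegmund duality identity, and then combining it with the pathwise sandwich above to upgrade to the a.s.\ statement \eqref{dualitycsbp3}.
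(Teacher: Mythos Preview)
Your pathwise sandwich is correct, and your diagnosis of the remaining obstacle---that one must show $\mathbb{P}(X_{-t,0}(y)=x)=0$ for fixed $x,y\in(0,\infty)$---is accurate. This atom-freeness does require genuine work: in the Grey regime the subordinator $z\mapsto X_{-t,0}(z)$ is a driftless compound Poisson process, so one must argue that the L\'evy measure of $v_t$ (the CSBP canonical measure at time $t$) is diffuse, which is a nontrivial structural fact about CSBPs and not a consequence of the non-linearity of $\Psi$ alone. Your proposed shortcut of first deriving \eqref{dualitycsbp2} ``as the probability-level Siegmund duality identity'' is not available without further input: identifying the abstract Siegmund dual of the CSBP with the concrete inverse-flow process $\hat X$ is precisely what the pair \eqref{dualitycsbp0}--\eqref{dualitycsbp3} accomplishes, so invoking Siegmund duality at that stage would be circular.

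The paper takes a shorter route that trades your distributional hypothesis for a path-regularity one. Starting from the already-established identity \eqref{dualitycsbp0}, it reduces \eqref{dualitycsbp3} to checking $\{\hat X_t(x)=y\}=\{X_{-t,0}(y)=x\}$ a.s. For this the paper decomposes $\{\hat X_t(x)=y\}$ according to whether $y$ is a jump location of the subordinator $X_{-t,0}$ or not, and eliminates the jump case by the elementary fact that a L\'evy process has no fixed time of discontinuity: $\mathbb{P}\bigl(X_{-t,0}(y-)\neq X_{-t,0}(y)\bigr)=0$ for each fixed $y$. This stochastic-continuity argument is automatic for any L\'evy process and requires no analysis of the one-dimensional law of $X_{-t,0}(y)$, which is the advantage the paper's approach buys over yours. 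What your approach buys is self-containment: you do not invoke \eqref{dualitycsbp0} at all, deriving the sandwich directly from the definition of the right inverse.
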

\begin{proof} Let $t\geq 0$ and $x,y\in (0,\infty)$. We establish that $\{\hat{X}_t(x)\leq y\}=\{x\leq X_t(y)\}$ almost surely. Note that this is equivalent to \eqref{dualitycsbp3}.  According to \eqref{dualitycsbp0}, 
$\{\hat{X}_t(x)< y\}=\{x<X_t(y)\}$ almost surely, therefore we only need to focus on the events $\{\hat{X}_t(x)=y\}$ and $\{X_t(y)=x\}$. Recall the definition of $\hat{X}_t(x)$ in \eqref{dualflowdef}, 
%\[
$$\{\hat{X}_t(x)=y\}=\{X_{-t,0}(y-)\leq x<X_{-t,0}(y)\} \cup \{X_{-t,0}(y-)=X_{-t,0}(y)=x\}.$$
%\]
Since $X_{-t,0}$ is a subordinator, it has no almost sure fixed discontinuities and the event $\{X_{-t,0}(y-)<X_{-t,0}(y)\}$ for fixed $y$ has probability $0$. Thus
\[\{\hat{X}_t(x)=y)\}=\{X_{-t,0}(y)=x)\} \text{ a.s,}\]
and \eqref{dualitycsbp3} is established. Since $X_{-t,0}(y)$ has the same law as $X_{t}(y)$, 
\[\mathbb{P}(\hat{X}_t(x)=y)=\mathbb{P}(X_{-t,0}(y)=x)=\mathbb{P}(X_{t}(y)=x),\]
and the identity \eqref{dualitycsbp2} holds.
%where the latter equality follows from the fact that . Finally, \eqref{dualitycsbp2} is established.\qed
\end{proof}
%It satisfies the duality relation
%\begin{equation}\label{duality} \mathbb{P}(\hat{X}_t(x)\geq y)=\mathbb{P}(X_t(y)\leq x), \text{ for any }x,y\in [0,\infty).\end{equation}  
The next theorem characterizes the semigroup of $(\hat{X}_t,t\geq 0)$.
\begin{theorem}[Theorem 3.5, Proposition 3.6 in  \cite{FoMaMa2019}]
For any continuous function $f$ defined on $(0,\infty)$ and any $q>0$,
\begin{equation}\label{semigroupXhat} \mathbb{E}[f(\hat{X}_t(\mathbbm{e}_q))]=\mathbb{E}[f(\mathbbm{e}_{v_t(q)})],
\end{equation}
where for any $\lambda \in (0,\infty)$, $\mathbbm{e}_{\lambda}$ is an exponential random variable with parameter $\lambda$.
  
The process admits an entrance boundary at $0+$ if and only if \eqref{Greycondition} is satisfied. 
\end{theorem}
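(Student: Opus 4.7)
The plan is to deduce the identity \eqref{semigroupXhat} directly from the Siegmund duality of Lemma \ref{dualitylemma} together with the CSBP Laplace transform \eqref{semigroupX}. The key observation is that it is enough to identify the distribution of $\hat X_t(\mathbbm{e}_q)$ as $\mathrm{Exp}(v_t(q))$: once this distributional equality is in hand, $\mathbb{E}[f(\hat X_t(\mathbbm{e}_q))]=\mathbb{E}[f(\mathbbm{e}_{v_t(q)})]$ holds for every measurable $f$ making either side defined, and the continuity hypothesis on $f$ is essentially cosmetic.

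First I would compute the tail $\mathbb{P}(\hat X_t(\mathbbm{e}_q)>y)$ by conditioning on $\mathbbm{e}_q$ (taken independent of the flow), invoking the duality \eqref{dualitycsbp2}, and applying Fubini:
\begin{align*}
\mathbb{P}\big(\hat X_t(\mathbbm{e}_q)>y\big)
&=\int_0^\infty qe^{-qx}\,\mathbb{P}\big(\hat X_t(x)>y\big)\,\ddr x
=\int_0^\infty qe^{-qx}\,\mathbb{P}\big(x>X_t(y)\big)\,\ddr x\\
&=\mathbb{E}\!\left[e^{-qX_t(y)}\right]=e^{-yv_t(q)},
\end{align*}
where the last equality uses \eqref{semigroupX}. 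This exhibits $\hat X_t(\mathbbm{e}_q)$ as $\mathrm{Exp}(v_t(q))$ and proves the semigroup identity.

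For the entrance boundary at $0+$, I would exploit the monotonicity of $x\mapsto\hat X_t(x)$ inherited from the definition \eqref{dualflowdef}: the almost sure limit $\hat X_t(0+):=\lim_{x\downarrow 0}\hat X_t(x)\in[0,\infty]$ exists. Passing to the limit in \eqref{dualitycsbp2} and using the monotone convergence of the nested events $\{X_t(y)<x\}\downarrow\{X_t(y)=0\}$ as $x\downarrow 0$, I obtain, for every $y>0$,
$$\mathbb{P}\big(\hat X_t(0+)>y\big)=\mathbb{P}\big(X_t(y)=0\big)=e^{-yv_t(\infty)}.$$
Under Grey's condition \eqref{Greycondition} one has $v_t(\infty)<\infty$, so $\hat X_t(0+)\sim\mathrm{Exp}(v_t(\infty))$ is a genuine nontrivial entrance law. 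When \eqref{Greycondition} fails, $v_t(\infty)=\infty$, so $\mathbb{P}(\hat X_t(0+)>y)=0$ for every $y>0$, whence $\hat X_t(0+)=0$ almost surely and no entrance from $0+$ is possible.

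The only step requiring any care is the passage to the limit $x\downarrow 0$ in the duality, but this is automatic once one observes that the events involved are monotone decreasing. Beyond this, the argument is routine Laplace-transform bookkeeping; the proof invokes no deeper properties of the flow than the duality of Lemma \ref{dualitylemma} and the branching identity \eqref{semigroupX}, and the principal conceptual content is the realisation that randomising the starting point by an exponential variable linearises the duality into a Laplace transform computation.
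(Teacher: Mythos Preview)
The paper does not actually prove this theorem; it is quoted as Theorem~3.5 and Proposition~3.6 of \cite{FoMaMa2019}, so there is no in-paper argument to compare against. That said, your derivation of \eqref{semigroupXhat} is correct and is precisely the natural route: randomising the starting point by an independent exponential turns the Siegmund duality \eqref{dualitycsbp2} into the CSBP Laplace transform \eqref{semigroupX}, and this is essentially the argument given in \cite{FoMaMa2019}.

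For the entrance-boundary claim your computation of the one-dimensional law of $\hat X_t(0+)$ is also correct and captures the core of the dichotomy. One caveat is worth recording: showing that $\hat X_t(0+)\sim\mathrm{Exp}(v_t(\infty))$ for each fixed $t$ does not by itself establish that $0+$ is an entrance boundary in the Feller sense. One still needs that the family of limit laws is consistent with the semigroup (so that $(\hat X_t(0+),t\geq 0)$ is a genuine Markov process with the same transitions) and that the extended semigroup is Feller on $[0,\infty)$. In the present setting both points follow easily from the cocycle identity $\hat X_{r,t}=\hat X_{s,t}\circ\hat X_{r,s}$ together with the almost-sure monotone limits, so the gap is minor; but strictly speaking your last paragraph proves the existence of a nondegenerate limiting law rather than the full entrance-boundary property.
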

When \eqref{Greycondition} is satisfied, we denote by $(\hat{X}_t(0+),t\geq 0)$ the process started from $0$, defined at any time, as $\hat{X}_t(0+):=\underset{x \downarrow 0}{\lim}\  \hat{X}_t(x)$ a.s. This corresponds to the first individual at time $t$ with descendants at time $0$.

We complete these fundamental results by showing that $\hat{X}$ satisfies some properties of regularity. For $x,y\in (0,\infty)$, set \begin{equation}\label{firstpassagetime}\hat{T}_y:=\inf\{t>0: \hat{X}_t(x)>y\}=\inf\{t>0: \hat{X}_t(x)=y\}.
\end{equation}
We shall sometimes write $\hat{T}_y=\hat{T}^x_y$ to emphasize on the initial state of the process.
\begin{lemma}[Regularity] If $-\Psi$ is not the Laplace exponent of a subordinator, then the process is regular on $(0,\infty)$, namely for any $x<y$, \[\mathbb{P}_x(\hat{T}_y<\infty)>0.\]
\end{lemma}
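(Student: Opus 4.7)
The plan is to transfer the claim to one about the forward CSBP via the Siegmund duality of Lemma \ref{dualitylemma}. By the very definition of $\hat{T}_y$ one has the trivial inclusion $\{\hat{X}_t(x) > y\} \subseteq \{\hat{T}_y \leq t\}$ for every $t > 0$, so \eqref{dualitycsbp2} gives
\[
\mathbb{P}_x(\hat{T}_y < \infty) \ \geq\ \mathbb{P}_x\big(\hat{X}_t(x) > y\big) \ =\ \mathbb{P}\big(X_t(y) < x\big).
\]
It therefore suffices to exhibit a single $t > 0$ for which $\mathbb{P}(X_t(y) < x) > 0$.

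I would obtain this from a Chebyshev-type bound based on the Laplace transform \eqref{semigroupX}. Splitting $\mathbb{E}[e^{-\lambda X_t(y)}]$ over $\{X_t(y) \leq \eta\}$ and its complement yields
\[
\mathbb{P}\big(X_t(y) \leq \eta\big) \ \geq\ e^{-y v_t(\lambda)} - e^{-\lambda \eta}, \qquad \lambda > 0,\ \eta \in (0, x),
\]
so the task reduces to finding $t, \lambda > 0$ and $\eta \in (0, x)$ with $y v_t(\lambda) < \lambda \eta$. Here the hypothesis enters. Recall that a CSBP has non-decreasing paths iff $-\Psi$ is the Laplace exponent of a subordinator iff the convex function $\Psi$ (which satisfies $\Psi(0) = 0$) is non-positive on $[0,\infty)$; our assumption rules this out, so the zero set of $\Psi$ in $[0,\infty)$ is an interval $[0,\rho]$ with some $\rho \geq 0$ (automatically $\rho = 0$ in the subcritical setting of the paper), and $\Psi > 0$ on $(\rho,\infty)$. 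Reading off the ODE $\tfrac{\dd}{\dd t} v_t(\lambda) = -\Psi(v_t(\lambda))$, $v_0(\lambda) = \lambda$, we see that $t \mapsto v_t(\lambda)$ is strictly decreasing and converges to $\rho$ as $t \to \infty$, for every $\lambda > \rho$. Choosing $\lambda$ larger than both $\rho$ and $\rho y/x$ (automatic when $\rho = 0$) secures $\rho < \lambda x/y$, and hence $y v_t(\lambda) < \lambda \eta$ for all sufficiently large $t$ and some $\eta \in (\rho y/\lambda,\, x)$, completing the proof.

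The genuine technical point, though classical, is the identification of $\lim_{t\to\infty} v_t(\lambda)$ with the largest zero of $\Psi$ in $[0,\lambda]$; this is where one has to use both the convexity of $\Psi$ and the fact that the flow of the branching ODE cannot cross its fixed points. Once this is in hand the proof is a clean combination of the Siegmund duality of Lemma \ref{dualitylemma} with the elementary Laplace-bound above.
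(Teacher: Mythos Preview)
Your argument is correct and shares its opening move with the paper: both reduce the question to showing $\mathbb{P}(X_t(y)<x)>0$ for some $t$, via the duality \eqref{dualitycsbp2}. From there the paper takes a shorter path, simply invoking the classical fact that when $-\Psi$ is not a subordinator exponent the event $\{X_t(y)\to 0\}$ has positive probability, whence $\liminf_{t\to\infty}\mathbb{P}(X_t(y)<x)>0$. Your route is instead a quantitative Chebyshev--Laplace bound $\mathbb{P}(X_t(y)\leq\eta)\geq e^{-yv_t(\lambda)}-e^{-\lambda\eta}$, combined with the ODE asymptotics $v_t(\lambda)\downarrow\rho$ for $\lambda>\rho$. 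This is a little longer but more self-contained: it needs only the behaviour of $v_t$ rather than the $0/\infty$ dichotomy for the CSBP. One small slip: the zero set of $\Psi$ on $[0,\infty)$ is $\{0\}$ or $\{0,\rho\}$, not the interval $[0,\rho]$ (on $(0,\rho)$ one has $\Psi<0$ in the supercritical case); this is harmless since all you actually use is that $\Psi>0$ on $(\rho,\infty)$, which is correct.
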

\begin{proof}
Let $x,y\in (0,\infty)$, for any $t>0$, $\mathbb{P}_x(\hat{T}_y<t)\geq \mathbb{P}\big(\hat{X}_t(x)>y\big)=\mathbb{P}\big(x>X_t(y)\big).$
By assumption, $-\Psi$  is not the Laplace exponent of a subordinator, this ensures that the CSBP is not almost surely non-decreasing and that the event $\{X_t(y)\underset{t\rightarrow \infty}{\longrightarrow} 0\}$ has positive probability. Hence, 
$\mathbb{P}_x(\hat{T}_y<\infty)\geq \underset{t\rightarrow \infty}{\lim}\mathbb{P}\big(x>X_t(y)\big)\geq \mathbb{P}(X_t(y)\underset{t\rightarrow \infty}{\longrightarrow} 0)>0.$\qed
\end{proof}
In the subcritical case, for which $\Psi'(0+)=\gamma>0$, one has $\underset{t\rightarrow \infty}{\lim} v_t(\lambda)=0$ and all families forward in time are getting extinct. As mentioned in the introduction, in this case the ancestral lineages are transient. 

\begin{proposition}[Proposition 3.8 in \cite{FoMaMa2019}]\label{transience} Assume $\Psi'(0+)>0$. For any $x\in (0,\infty)$, the ancestral lineage process $(\hat{X}_t(x), t\geq 0)$ is transient, i.e
%\[
$\hat{X}_t(x)\underset{t\rightarrow \infty}{\longrightarrow} \infty \text{ a.s.}$
%\]
\end{proposition}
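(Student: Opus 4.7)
The plan is to combine the sample-path duality \eqref{dualitycsbp3} with the almost-sure extinction of the subcritical CSBP, then upgrade convergence in probability to almost-sure convergence via a Borel--Cantelli argument that uses the flow's cocycle structure to control behaviour between integer times.

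First I would establish the almost-sure decay $X_t(y)\to 0$. Differentiating $\dot v_t(\lambda)=-\Psi(v_t(\lambda))$ at $\lambda=0$ using $\Psi'(0+)=\gamma>0$ yields $v_t'(0+)=e^{-\gamma t}$, so \eqref{semigroupX} gives $\mathbb{E}[X_t(y)]=ye^{-\gamma t}$ and, by the Markov property, $(e^{\gamma t}X_t(y))_{t\geq 0}$ is a non-negative martingale converging a.s.\ to a finite limit; this forces $X_t(y)\to 0$ a.s. Together with \eqref{dualitycsbp2} this already yields $\hat X_t(x)\to\infty$ in probability.

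For the almost-sure statement, Markov's inequality and \eqref{dualitycsbp2} at integer times give
\begin{equation*}
\mathbb{P}\bigl(\hat X_n(x)\leq y\bigr)=\mathbb{P}\bigl(X_n(y)\geq x\bigr)\leq \frac{ye^{-\gamma n}}{x},
\end{equation*}
summable in $n$, so Borel--Cantelli yields $\hat X_n(x)>y$ for almost all large $n$. To pass to all $t\to\infty$, I would invoke the cocycle identity $X_{-t,0}(y)=X_{-n,0}(X_{-t,-n}(y))$ for $t\in[n,n+1]$ (property (3)) together with the independence, granted by property (2), between $X_{-n,0}$ and the family $\{X_{-n-s,-n}:s\in[0,1]\}$. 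Writing $M_n:=\sup_{s\in[0,1]}X_{-n-s,-n}(y)$ and using monotonicity of the subordinator $X_{-n,0}$ gives the pathwise bound $\sup_{t\in[n,n+1]}X_{-t,0}(y)\leq X_{-n,0}(M_n)$. Splitting on $\{M_n\leq K\}$, combining the bound $\mathbb{P}(X_{-n,0}(K)\geq x)\leq Ke^{-\gamma n}/x$ (Markov on the independent subordinator) with a weak-$L^1$ tail estimate $\mathbb{P}(M_n\geq K)\leq Cy/K$, and optimising in $K$ produces a summable bound of order $e^{-\gamma n/2}$ on $\mathbb{P}(\sup_{t\in[n,n+1]}X_{-t,0}(y)\geq x)$. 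A second Borel--Cantelli application then forces $\hat X_t(x)>y$ for every $t\in[n,n+1]$ once $n$ is large enough; since $y>0$ was arbitrary, $\liminf_{t\to\infty}\hat X_t(x)=\infty$ almost surely.

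The main obstacle is the continuous-time extension. The pointwise almost-sure duality for each fixed $t$ must be promoted to a pathwise statement on $[n,n+1]$, and the process $s\mapsto X_{-n-s,-n}(y)$ is not a standard CSBP in $s$ (varying $s$ changes the starting time while fixing the end time, coupling different values through shared flow segments), so the weak-$L^1$ tail bound on $M_n$ requires a careful argument exploiting the subordinator structure rather than a direct Doob inequality. The cocycle reduction isolates this difficulty into a factor of a single subordinator evaluated at an independent maximum, which is precisely the form on which the tail estimate can be carried out.
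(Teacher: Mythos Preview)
The paper does not prove this proposition; it is quoted verbatim as Proposition~3.8 of \cite{FoMaMa2019} and used as a standing input for the subsequent arguments. There is therefore no proof in the present paper to compare your attempt against, and I can only assess the proposal on its own merits.

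Your first two steps are clean and correct. The non-negative martingale $(e^{\gamma t}X_t(y))_{t\geq 0}$ forces $X_t(y)\to 0$ a.s., and the duality \eqref{dualitycsbp2} plus Markov's inequality gives $\mathbb{P}(\hat X_n(x)\leq y)\leq ye^{-\gamma n}/x$, which is summable; Borel--Cantelli then handles the integer skeleton.

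The continuous-time extension, however, is incomplete in two respects.
\begin{itemize}
\item[(i)] \emph{Measurability.} The quantity $M_n=\sup_{s\in[0,1]}X_{-n-s,-n}(y)$ presupposes a version of the flow that is jointly regular in the \emph{backward} time parameter. In the Bertoin--Le Gall construction the subordinators $X_{s,t}$ are produced for each fixed pair $(s,t)$; c\`adl\`ag regularity of $s\mapsto X_{-n-s,-n}(y)$ for fixed $y$ is not part of the definition and would itself need justification.
\item[(ii)] \emph{The tail bound.} You correctly flag that $s\mapsto X_{-n-s,-n}(y)$ is not a CSBP in $s$, so no martingale or Doob-type maximal inequality applies. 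But then the asserted weak-$L^1$ estimate $\mathbb{P}(M_n\geq K)\leq Cy/K$ is left entirely unsupported. The ``subordinator structure'' you appeal to is in the \emph{spatial} variable, not in $s$, and I do not see how it yields a bound on the supremum in $s$. Without this estimate the optimisation in $K$ collapses, and with it the second Borel--Cantelli step.
\end{itemize}

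A route that avoids the uncontrolled supremum altogether is to stay on the $\hat X$ side and integrate the same Markov bound:
\[
\int_0^{\infty}\mathbb{P}\bigl(\hat X_t(x)\leq y\bigr)\,\ddr t
=\int_0^{\infty}\mathbb{P}\bigl(X_t(y)\geq x\bigr)\,\ddr t
\leq \frac{y}{\gamma x}<\infty,
\]
so the expected occupation time of every half-line $(0,y]$ is finite. Since $(\hat X_t(x))_{t\geq 0}$ is a regular c\`adl\`ag Feller process on $(0,\infty)$ (both facts are recorded in the paper just before Proposition~\ref{transience}), the standard recurrence/transience dichotomy for strong Markov processes then forces $\hat X_t(x)\to\infty$ a.s. This still requires citing a general theorem, but it sidesteps the delicate control of the backward flow between integer times that your scheme would need.
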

The main aim of this article is to obtain an almost sure renormalisation of the inverse flow and to interpret it in terms of the genealogy. 
%We shall see that the speed of escape towards $\infty$ varies in function of the individual $x\in (0,\infty)$. 
Siegmund dual processes of discrete branching Markov processes have been studied by Pakes et al. \cite{MR2409319} and Pakes \cite{pakes2017}.  A striking difference for continuous-state space processes is that when Grey's condition \eqref{Greycondition} does not hold, the latter are persistent in the sense that although subcritical, they are not getting absorbed at $0$, but are decreasing towards $0$ while keeping positive mass at all times.  
%We shall follow a different route than Pakes \cite{pakes2017} in order to take this phenomenon into account. 
%We also shed light on an almost sure interpretation in terms of genealogy of the Siegmund dual. 
%Moreover, 

%We will adapt the proof of \cite[Theorem 10]{pakes2017} to establish the almost sure convergence for a fixed initial value $x$. The method follows closely the work of Barbour \cite{MR0375487},
\section{Results}\label{results}
%Our main result is the following.
%Recall $t\mapsto v_t(\lambda)$ defined in \eqref{v}, and note  that for any $\lambda>0$, $v_t(\lambda)\underset{t\rightarrow \infty}{\longrightarrow} 0$. 
%almost sure renormalisation.
\noindent Assume $\Psi'(0+)>0$. For any $\lambda \in (0,\infty)$, define the map
\begin{equation}\label{laplaceexponent} \kappa_\lambda: \theta \mapsto e^{-\Psi'(0+)\int_{\theta}^{\lambda}\frac{\ddr u}{\Psi(u)}}. 
\end{equation}
We shall see that the function $\kappa_\lambda$ is the Laplace exponent of a drift-free subordinator. We denote its L\'{e}vy measure by $\nu_{\lambda}$. The latter is finite if and only if $\int^{\infty}\frac{\ddr u}{\Psi(u)}<\infty$ (Grey's condition). In this case, the function $\kappa_\infty$ defined in \eqref{qsd},
% as
%$\kappa_\infty:\theta \mapsto e^{-\Psi'(0+)\int_{\theta}^{\infty}\frac{\ddr u}{\Psi(u)}}$
is the Laplace exponent of compound Poisson process with jump law $\nu_{\infty}$, the quasi-stationary distribution of the $\Psi$-CSBP conditioned on non-extinction.

Our main result is the following. 
\begin{theorem}\label{mainthm} Assume $\Psi'(0+)>0$.  Fix  $\lambda \in (0,\infty)$. Then, almost surely
\[v_t(\lambda)\hat{X}_t(x)\underset{t\rightarrow \infty}{\longrightarrow} \hat{W}^{\lambda}(x), \text{ for all } x\notin J^{\lambda}:=\{x>0: \hat{W}^{\lambda}(x+)>\hat{W}^{\lambda}(x)\},
\]
where $(\hat{W}^{\lambda}(x),x\geq 0)$ is a process with non-decreasing left-continuous sample paths and its right-continuous inverse process $(W^{\lambda}(y),y\geq 0)$, defined for any $y\geq 0$, by
%\[
%\begin{center}
$W^{\lambda}(y):=\inf\{x\geq 0: \hat{W}^{\lambda}(x)> y\}$, 
%\end{center}
%,\]
is a drift-free  subordinator with Laplace exponent $\kappa_\lambda$.
% $$\kappa_\lambda: \theta \mapsto e^{-\Psi'(0+)\int_{\theta}^{\lambda}\frac{\ddr u}{\Psi(u)}}=:\int_{0}^{\infty}(1-e^{-\theta x})\nu_{\lambda}(\ddr x)$$
%where $\nu_{\lambda}$ denotes its Lévy measure. 
\begin{itemize}
\item[i)] If $\int^{\infty}\frac{\ddr u}{\Psi(u)}=\infty$, then for any $\lambda \in (0,\infty)$ the process $(\hat{W}^{\lambda}(x),x\geq 0)$ has  continuous sample paths almost surely.
% if and only if $\int^{\infty}\frac{\ddr u}{\Psi(u)}=\infty$. 
%If $\int^{\infty}\frac{\ddr u}{\Psi(u)}<\infty$, then $W^{\infty}$ has discrete jumps Lévy measure the quasi-stationary distribution of the CSBP.
%\begin{itemize}
%\item[(i)] 
\item[ii)] If $\int^{\infty}\frac{\ddr u}{\Psi(u)}<\infty$, then for any $\lambda \in (0,\infty)$ the process $(\hat{W}^{\lambda}(x),x\geq 0)$ has piecewise constant sample paths almost surely, and 
\[v_t(\infty)\hat{X}_t(x) \underset{t\rightarrow \infty}{\longrightarrow} \hat{W}^{\infty}(x) \text{ for all }x\notin J^{\infty} \text{ almost surely},\] where $(\hat{W}^{\infty}(x),x\geq 0)$ is the inverse of a compound Poisson process with Laplace exponent $\kappa_\infty$.
\end{itemize}

%\item[(ii)] If $\int^{\infty}\frac{\ddr u}{\Psi(u)}=\infty$ then for any $\lambda \in (0,\infty)$, the Lévy measure of the associated subordinator $W^{\lambda}$ is infinite and the process $(\hat{W}^{\lambda}(x),x\geq 0)$ has continuous paths.
%\end{itemize}
\end{theorem}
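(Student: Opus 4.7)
The plan is to construct a positive eigenfunction of the ancestral lineage semigroup $\hat P_t$ by transferring, via Theorem \ref{invariantfunction} and Siegmund duality, a natural $(-\gamma)$-invariant measure of the CSBP; the resulting martingale along $(\hat X_t(x))$ supplies the almost-sure input, while Siegmund duality applied to the rescaled $y$-subordinator $u\mapsto X_t(u/v_t(\lambda))$ supplies the distributional identification of the limit with $\hat W^{\lambda}(x)$.

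\textbf{The exponent $\kappa_\lambda$ and the eigenfunction.} Dividing the ODE $v_t'=-\Psi(v_t)$ by $v_t$ and using $\Psi(u)\sim\gamma u$ at $0$ yields $v_t(\theta)/v_t(\lambda)\to\kappa_\lambda(\theta)$ as $t\to\infty$. Each $\theta\mapsto v_t(\theta)/v_t(\lambda)$ is Bernstein with value $0$ at $0$, so $\kappa_\lambda$ is itself Bernstein; drift-freeness comes from $\log(\kappa_\lambda(\theta)/\theta)\to-\infty$ at infinity, using the superlinear growth of $\Psi$ guaranteed by $\sigma>0$ or $\pi\neq 0$. Denote by $\nu_\lambda$ the Lévy measure of $\kappa_\lambda$ and set $\hat h_\lambda(z):=\nu_\lambda((z,\infty))$. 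The identity $\kappa_\lambda(v_t(q))=e^{-\gamma t}\kappa_\lambda(q)$, immediate from \eqref{laplaceexponent} and \eqref{v}, reads as
\[\int_0^\infty(1-e^{-yv_t(q)})\,\nu_\lambda(\mathrm{d}y)=e^{-\gamma t}\int_0^\infty(1-e^{-qy})\,\nu_\lambda(\mathrm{d}y),\]
i.e.\ $\nu_\lambda$ is a $(-\gamma)$-invariant measure of the CSBP on the core $\{1-e_q:q>0\}$. By Theorem \ref{invariantfunction} applied to the Siegmund dual, $\hat h_\lambda$ is a $(-\gamma)$-invariant function of $\hat P_t$, i.e.\ $\hat P_t\hat h_\lambda=e^{-\gamma t}\hat h_\lambda$ (alternatively, mixing \eqref{semigroupXhat} with an independent $\mathbbm{e}_q$ and inverting Laplace transforms gives this identity directly). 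Consequently $M_t^{x}:=e^{\gamma t}\hat h_\lambda(\hat X_t(x))$ is a positive martingale with constant mean $\hat h_\lambda(x)$, converging almost surely to a finite limit $L^{x,\lambda}$.

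\textbf{Distributional convergence and almost-sure upgrade.} By Siegmund duality (Lemma \ref{dualitylemma}),
\[\mathbb P\bigl(v_t(\lambda)\hat X_t(x)>y\bigr)=\mathbb P\bigl(X_t(y/v_t(\lambda))<x\bigr),\]
and since $u\mapsto X_t(u/v_t(\lambda))$ is a subordinator with Laplace exponent $v_t(\cdot)/v_t(\lambda)\to\kappa_\lambda$, it converges in law to $W^\lambda$; hence $v_t(\lambda)\hat X_t(x)\Rightarrow\hat W^\lambda(x)$ at every continuity point. The delicate step is the almost-sure upgrade. I would use the scaling $\hat h_{v_t(\lambda)}=e^{\gamma t}\hat h_\lambda$ (from $\nu_{v_t(\lambda)}=e^{\gamma t}\nu_\lambda$) together with the monotone bound $z\hat h_\lambda(z)\le\kappa_\lambda'(0+)=e^{-h(\lambda)}/\lambda$ (coming from $\int_0^\infty\hat h_\lambda=\kappa_\lambda'(0+)$) to rewrite $M_t^x=e^{\gamma t}\hat h_\lambda(V_t/v_t(\lambda))$ with $V_t:=v_t(\lambda)\hat X_t(x)$, trap $\liminf V_t$ and $\limsup V_t$ between expressions involving the a.s.\ martingale limit $L^{x,\lambda}$, and then pin down their common value via the distributional identification with $\hat W^\lambda(x)$. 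The monotonicity of $x\mapsto\hat X_t(x)$ and the left-continuity of $\hat W^\lambda$ extend the fixed-$x$ convergence on a single almost-sure event to all $x\notin J^\lambda$.

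\textbf{Cases (i), (ii) and path regularity.} Grey's condition is equivalent to $\nu_\lambda((0,\infty))=\kappa_\lambda(\infty)<\infty$. Under its failure, $\nu_\lambda$ is infinite, $W^\lambda$ is strictly increasing, and its inverse $\hat W^\lambda$ has continuous sample paths; under it, $W^\lambda$ is compound Poisson and $\hat W^\lambda$ is piecewise constant. The additional $\lambda=\infty$ statement of case (ii) follows by letting $\lambda\uparrow\infty$ in the preceding analysis, using the monotone convergence $v_t(\lambda)\uparrow v_t(\infty)$ and the corresponding convergence of $\nu_\lambda$ to the finite Lévy measure $\nu_\infty$ associated to the quasi-stationary distribution via \eqref{qsd}.
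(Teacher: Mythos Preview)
Your distributional identification via duality and the treatment of cases (i)--(ii) are fine and run parallel to the paper. The genuine problem is the almost-sure upgrade: your martingale is degenerate. With $\theta=-\gamma$ you pick the \emph{decreasing} eigenfunction $\hat h_\lambda(z)=\bar\nu_\lambda(z)$, and for the positive martingale $M_t^{x}=e^{\gamma t}\hat h_\lambda(\hat X_t(x))$ to be useful you need its a.s.\ limit $L^{x,\lambda}$ to be strictly positive. In general it is zero. Concretely, for the subcritical Feller diffusion $\Psi(u)=\gamma u+\tfrac{\sigma^2}{2}u^2$ one computes $\kappa_\lambda(q)=\dfrac{q(\gamma+\tfrac{\sigma^2}{2}\lambda)}{\lambda(\gamma+\tfrac{\sigma^2}{2}q)}$, a compound Poisson exponent with $\bar\nu_\lambda(z)=C\,e^{-2\gamma z/\sigma^2}$. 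Since the theorem itself (or Corollary \ref{LlogL}) gives $\hat X_t(x)\sim \hat W^\lambda(x)/v_t(\lambda)$ with $v_t(\lambda)\asymp e^{-\gamma t}$, the process $\hat X_t(x)$ grows exponentially, so $\hat h_\lambda(\hat X_t(x))$ decays doubly exponentially and $M_t^{x}\to 0$ a.s. The same collapse occurs in the Neveu example, where $\bar\nu(z)\sim \gamma e^{-z}/z$. Your scaling identity $\hat h_{v_t(\lambda)}=e^{\gamma t}\hat h_\lambda$ is correct but tautological here: it rewrites $M_t^x$ as $\hat h_{v_t(\lambda)}(\hat X_t(x))$, not as $\hat h_\lambda(V_t)$, and the one-sided bound $z\hat h_\lambda(z)\le\kappa_\lambda'(0+)$ only yields $\limsup e^{-\gamma t}\hat X_t(x)<\infty$ on the event $\{L^{x,\lambda}>0\}$, which has probability zero. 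Combining a degenerate a.s.\ martingale limit with the distributional convergence cannot ``pin down'' $\liminf V_t=\limsup V_t$.

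The paper's route is different precisely to avoid this: it takes a $\theta$-invariant measure $\mu_\theta$ with $\theta>0$ (in fact $\theta=1$), whose distribution function $f_\theta(y)=\mu_\theta([0,y))$ is \emph{increasing}. A Tauberian argument (Lemma \ref{regularvar}) shows $f_\theta$ is regularly varying at $\infty$ with index $\theta/\gamma>0$, and the martingale $e^{-t}f_1(\hat X_t(x))$ has a strictly positive limit, identified via optional stopping as $f_1(y_0)e^{-\hat T^{x}_{y_0}}$ (this step uses $\theta>0$ so that $e^{-\theta t}$ is bounded). One can then invert the regular variation, using $R_\lambda(v_t(\lambda))=e^{t}$, to obtain the a.s.\ limit of $v_t(\lambda)\hat X_t(x)$. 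If you want to salvage your eigenmeasure idea, switch to a positive $\theta$; the $(-\gamma)$-eigenmeasure $\nu_\lambda$ is the right object for the \emph{law} of the limit (it is literally the L\'evy measure of $W^\lambda$), but not for producing a non-trivial almost-sure martingale along $\hat X$.
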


The next observation ensures that the choice of the parameter $\lambda$ is arbitrary. A change in $\lambda$ only affects the limit by a multiplicative factor.
\begin{lemma}\label{lambda} For any $\lambda'\neq \lambda \in (0,\infty)$ and $x\in (0,\infty)$ $$\hat{W}^{\lambda'}(x)=c_{\lambda',\lambda}\hat{W}^{\lambda}(x)\text{ almost surely, with } c_{\lambda',\lambda}=e^{\Psi'(0+)\int_{\lambda}^{\lambda'}\frac{\ddr u}{\Psi(u)}}.$$
\end{lemma}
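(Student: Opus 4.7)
The plan is to leverage Theorem~\ref{mainthm} so that both $\hat{W}^\lambda(x)$ and $\hat{W}^{\lambda'}(x)$ are almost sure limits along $t\to\infty$ of $v_t(\lambda)\hat{X}_t(x)$ and $v_t(\lambda')\hat{X}_t(x)$ respectively. Since $\hat{X}_t(x)$ is the common factor, the ratio of the two limits reduces to a purely deterministic problem about the branching semigroup:
\[
\lim_{t\to\infty} \frac{v_t(\lambda')}{v_t(\lambda)} \stackrel{?}{=} c_{\lambda',\lambda}.
\]
Once this is established, the lemma follows by dividing (and handling the exceptional set as described below).

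For the deterministic part, I would work directly from the integral equation~\eqref{v}. Writing it for both $\lambda$ and $\lambda'$ and subtracting, one gets the key identity
\[
\int_{v_t(\lambda)}^{v_t(\lambda')} \frac{\ddr u}{\Psi(u)} = \int_{\lambda}^{\lambda'} \frac{\ddr u}{\Psi(u)} =: C,
\]
valid for \emph{every} $t\geq 0$. Because we are in the subcritical regime, $v_t(\lambda)$ and $v_t(\lambda')$ both tend to $0$ as $t\to\infty$, and near $0$ one has $\Psi(u)/u\to\Psi'(0+)=\gamma>0$. A standard sandwich argument then yields
\[
\int_{v_t(\lambda)}^{v_t(\lambda')} \frac{\ddr u}{\Psi(u)} \;=\; \frac{1+o(1)}{\gamma}\,\log\!\frac{v_t(\lambda')}{v_t(\lambda)} \qquad (t\to\infty),
\]
so that $\log (v_t(\lambda')/v_t(\lambda))\to \gamma C$ and hence $v_t(\lambda')/v_t(\lambda)\to e^{\Psi'(0+)\int_{\lambda}^{\lambda'}\ddr u/\Psi(u)} = c_{\lambda',\lambda}$.

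Combining the two ingredients gives $\hat{W}^{\lambda'}(x) = c_{\lambda',\lambda}\hat{W}^{\lambda}(x)$ almost surely for every $x \in (0,\infty)\setminus(J^{\lambda}\cup J^{\lambda'})$. To get the statement for arbitrary fixed $x$, I would note that $J^\lambda\cup J^{\lambda'}$ is at most countable (jumps of monotone functions), hence of Lebesgue measure zero, and that both $x\mapsto \hat{W}^{\lambda}(x)$ and $x\mapsto \hat{W}^{\lambda'}(x)$ are left-continuous by construction; therefore an identity holding on a dense subset of $(0,\infty)$ extends to every $x$. Alternatively, one may observe that for any fixed $x>0$ the event $\{x\in J^{\lambda}\cup J^{\lambda'}\}$ has probability zero (by general properties of inverses of driftless subordinators), which is enough.

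The only real work is the deterministic asymptotic of $v_t(\lambda')/v_t(\lambda)$; everything else is bookkeeping via Theorem~\ref{mainthm} and the left-continuity of the two limits. I do not expect any genuine obstacle — the proof should fit in a few lines once the identity for $\int_{v_t(\lambda)}^{v_t(\lambda')} \ddr u/\Psi(u)$ is in place.
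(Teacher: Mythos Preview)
Your proposal is correct and follows essentially the same route as the paper's own proof (given there as Lemma~\ref{scalelambda}): establish the deterministic limit $v_t(\lambda')/v_t(\lambda)\to c_{\lambda',\lambda}$, then combine with the almost sure convergence and left-continuity of $\hat{W}^{\lambda}$ already obtained in Lemmas~\ref{asconvlambda} and~\ref{modification}. The only cosmetic differences are that the paper computes the ratio via $v_t(\lambda)/v_t(\lambda')=\exp\int_{\lambda'}^{\lambda}\frac{\Psi(v_t(u))}{v_t(u)}\frac{\ddr u}{\Psi(u)}$ (differentiating $\log v_t$ in $\lambda$) and passes to the limit by monotone convergence, whereas you subtract the two copies of~\eqref{v} to get the conserved integral $\int_{v_t(\lambda)}^{v_t(\lambda')}\frac{\ddr u}{\Psi(u)}=\int_{\lambda}^{\lambda'}\frac{\ddr u}{\Psi(u)}$; and note that in the paper's logical order this lemma \emph{precedes} Theorem~\ref{mainthm} (indeed Lemma~\ref{levymeasure}, used for Theorem~\ref{mainthm}, invokes it), so you should cite Lemmas~\ref{asconvlambda}/\ref{modification} directly rather than Theorem~\ref{mainthm} --- the content you actually use is only from those lemmas, so there is no circularity.
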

Recall $\pi$ the L\'{e}vy measure in the L\'{e}vy-Khintchine form  \eqref{branchingmechanism} of $\Psi$.  The following corollary shows that the ancestral lineage process $(\hat{X}_t(x),t\geq 0)$ has an exponential growth when the measure $\pi$  satisfies an $L\log L$ condition.
\begin{corollary}\label{LlogL} For any $\lambda>0$, $v_t(\lambda)\underset{t\rightarrow \infty}{\sim} c_{\lambda}e^{-\Psi'(0+)t}$  for some constant $c_{\lambda}>0$ if and only if $\int_{1}^{\infty}u \log u\pi(\ddr u)<\infty$. Moreover, under this latter condition, almost surely
\[e^{-\Psi'(0+)t}\hat{X}_t(x)\underset{t\rightarrow \infty}{\longrightarrow} \hat{W}(x), \text{ for all } x\notin J,\]
where $ J:=\{x>0: \hat{W}(x+)>\hat{W}(x)\}$
and $\hat{W}$ is the inverse of a subordinator $W$ with Laplace exponent
\[\kappa: \theta\in [0,\infty)\mapsto \theta e^{-\Psi'(0+)\int_{0}^{\theta}\left(\frac{1}{\Psi'(0+)u}-\frac{1}{\Psi(u)}\right)\ddr u}.\] 
\end{corollary}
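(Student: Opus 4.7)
The plan is to derive the corollary from Theorem \ref{mainthm} by isolating the precise rate of decay of $v_t(\lambda)$ and then rescaling the limit $\hat{W}^\lambda$ into $\hat{W}$, using a classical $L\log L$ analysis of the ODE \eqref{v}.

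First I would handle the equivalence $v_t(\lambda)\sim c_\lambda e^{-\Psi'(0+)t}$. Setting $\gamma:=\Psi'(0+)>0$ and starting from the integral equation $\int_{v_t(\lambda)}^{\lambda}\frac{\ddr u}{\Psi(u)}=t$, I would add and subtract $\frac{1}{\gamma u}$ to rewrite
\[
\log(v_t(\lambda) e^{\gamma t}) \;=\; \log\lambda \;-\;\gamma\int_{v_t(\lambda)}^{\lambda}\Bigl(\frac{1}{\gamma u}-\frac{1}{\Psi(u)}\Bigr)\ddr u.
\]
Since $v_t(\lambda)\downarrow 0$, the existence of $c_\lambda:=\lim_t v_t(\lambda)e^{\gamma t}\in(0,\infty)$ is equivalent to the finiteness at $0$ of this integral. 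Inserting the L\'{e}vy–Khintchine form \eqref{branchingmechanism}, Fubini yields
\[
\int_{0}^{\lambda}\frac{\Psi(u)-\gamma u}{u\,\Psi(u)}\,\ddr u \;\asymp\; \tfrac{\sigma^2}{2}\int_0^\lambda \ddr u \;+\; \int_0^\infty\!\!\pi(\ddr x)\int_0^{\lambda x}\frac{e^{-v}-1+v}{v^2}\,\ddr v,
\]
and the elementary asymptotics $\int_0^{M}\frac{e^{-v}-1+v}{v^2}\ddr v\sim\log M$ as $M\to\infty$ identify the integrability of the right-hand side with $\int_1^\infty x\log x\,\pi(\ddr x)<\infty$. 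This gives the first equivalence, together with the explicit value
\[
c_\lambda \;=\; \lambda\exp\!\Bigl(-\gamma\int_0^{\lambda}\bigl(\tfrac{1}{\gamma u}-\tfrac{1}{\Psi(u)}\bigr)\ddr u\Bigr).
\]

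Next, under the $L\log L$ condition, I apply Theorem \ref{mainthm}: $v_t(\lambda)\hat X_t(x)\to \hat W^{\lambda}(x)$ a.s. for every $x\notin J^\lambda$. Writing $e^{-\gamma t}\hat X_t(x)=\bigl(v_t(\lambda)e^{\gamma t}\bigr)^{-1} v_t(\lambda)\hat X_t(x)$ and using $v_t(\lambda)e^{\gamma t}\to c_\lambda$, we obtain $e^{-\gamma t}\hat X_t(x)\to \hat W(x):=c_\lambda^{-1}\hat W^\lambda(x)$ a.s., on the same exceptional set $J=J^\lambda$. The right-continuous inverse of $\hat W$ is then $W(y)=W^\lambda(c_\lambda y)$, which is a drift-free subordinator with Laplace exponent $\kappa(\theta)=c_\lambda \kappa_\lambda(\theta)$.

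Finally, using the expressions for $c_\lambda$ and $\kappa_\lambda$ together with $\log(\lambda/\theta)=\int_\theta^\lambda \ddr u/u$, a direct cancellation gives
\[
c_\lambda\kappa_\lambda(\theta) \;=\; \theta\exp\!\Bigl(-\gamma\int_0^{\theta}\bigl(\tfrac{1}{\gamma u}-\tfrac{1}{\Psi(u)}\bigr)\ddr u\Bigr),
\]
which is the claimed formula for $\kappa$. The $\lambda$-independence here is consistent with Lemma \ref{lambda}. I expect the main obstacle to be the $L\log L$ step: making the Fubini-based equivalence between the integrability of $(\gamma u)^{-1}-\Psi(u)^{-1}$ near $0$ and the tail integral $\int_1^\infty x\log x\,\pi(\ddr x)$ fully rigorous. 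Everything else is a direct application of Theorem \ref{mainthm} and an algebraic rescaling of the limiting subordinator.
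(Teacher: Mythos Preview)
Your approach is essentially identical to the paper's: both rewrite \eqref{v} to reduce the asymptotic $v_t(\lambda)\sim c_\lambda e^{-\gamma t}$ to the finiteness of $\int_0^\lambda\bigl(\tfrac{1}{\gamma u}-\tfrac{1}{\Psi(u)}\bigr)\ddr u$, then apply Theorem~\ref{mainthm} and rescale $\hat W^\lambda$ by $c_\lambda^{-1}$, checking $c_\lambda\kappa_\lambda(\theta)=\kappa(\theta)$ by the same algebraic cancellation. The only difference is that the paper outsources the $L\log L$ equivalence to \cite[around Proposition~3.14]{MR2760602} whereas you sketch it directly; note however that your Fubini display drops a factor of $x$ (the substitution $v=ux$ in $\int_0^\lambda\frac{e^{-ux}-1+ux}{u^2}\ddr u$ yields $x\int_0^{\lambda x}\frac{e^{-v}-1+v}{v^2}\ddr v$), and it is precisely this extra $x$ that turns the $\log M$ asymptotic into the $x\log x$ tail condition rather than merely $\log x$.
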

\begin{example} Let $\gamma>0$. Consider the subcritical Neveu CSBP whose branching mechanism is defined by $\Psi(u):=\gamma (u+1)\log(u+1)$ for all $u\geq 0$. Note that $\Psi'(0+)=\gamma>0$ and $\int^{\infty}\frac{\ddr u}{\Psi(u)}=\infty$. Solving \eqref{v} yields \[v_t(\lambda)=(\lambda+1)^{e^{-\gamma t}}-1 \underset{t\rightarrow \infty}{\sim} \log(1+\lambda) e^{-\gamma t}.\]  By Corollary \ref{LlogL}, almost surely \[e^{-\gamma t}\hat{X}_t(x)\underset{t\rightarrow \infty}{\longrightarrow} \hat{W}(x) \text{ for all } x\geq 0,\]
where $\hat{W}$ is the inverse of a subordinator $W$ with Laplace exponent
\[\kappa(\theta)=\gamma \log(1+\theta)=\int_{0}^{\infty}(1-e^{-\theta x})\nu(\ddr x),\]
with $\nu(\ddr x):=\gamma\frac{e^{-x}}{x}\ddr x$. The limiting process $\hat{W}$ is therefore an inverse Gamma subordinator.
\end{example}

The process $(\hat{W}^{\lambda}(x),x\geq 0)$ can be interpreted as follows. Define a random equivalence relation $\mathscr{A}$ on $(0,\infty)$ via
\begin{center}
 $x\overset{\mathscr{A}}{\sim} y$ if and only if $\hat{W}^{\lambda}(x)=\hat{W}^{\lambda}(y)$.
\end{center}
This induces a random partition of the set $(0,\infty)$ into intervals of constancy of $\hat{W}^{\lambda}$. A simple application of Lemma \ref{lambda} ensures that this partition does not depend on $\lambda$.  
%One writes $\mathscr{A}=\left((x_{i-1},x_i], i\in I\right)$, the partition ordered in the decreasing order of interval lengths.
By definition, the subintervals of the partition $\mathscr{A}$ are made of individuals whose ancestral lineages have the same asymptotic behaviour. The next proposition states that $\mathscr{A}$ corresponds actually to the families of current individuals having a common ancestor.
\begin{proposition}\label{ancestralpartition} For any $x, y \in (0,\infty)$,  \[x\overset{\mathscr{A}}{\sim} y \text{ if and only if } \hat{X}_t(x)=\hat{X}_{t}(y) \text{ for some } t\geq 0.\]
\end{proposition}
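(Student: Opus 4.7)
The proof splits into two implications. The choice of $\lambda \in (0,\infty)$ is arbitrary since, by Lemma \ref{lambda}, the partition $\mathscr{A}$ does not depend on it.

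The direction ``$\hat{X}_t(x) = \hat{X}_t(y)$ for some $t \geq 0 \Rightarrow x \overset{\mathscr{A}}{\sim} y$'' is the easy one. Given $\hat{X}_{t_0}(x) = \hat{X}_{t_0}(y)$, the flow composition $\hat{X}_t = \hat{X}_{t_0, t} \circ \hat{X}_{t_0}$ valid for $t \geq t_0$ forces $\hat{X}_t(x) = \hat{X}_t(y)$ for every $t \geq t_0$. Multiplying by $v_t(\lambda)$ and letting $t \to \infty$ via Theorem \ref{mainthm} (first for $x, y \notin J^{\lambda}$; the general case follows by left-continuity of $\hat{W}^{\lambda}$ and monotone approximation from the left) yields $\hat{W}^{\lambda}(x) = \hat{W}^{\lambda}(y)$.

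For the converse, my strategy is to compare probabilities. Set $A_t := \{\hat{X}_t(x) = \hat{X}_t(y)\}$. By the coalescing property of the flow, the events $A_t$ are non-decreasing in $t$, so
\begin{equation*}
\mathbb{P}\bigl(\exists\, t \geq 0 : \hat{X}_t(x) = \hat{X}_t(y)\bigr) = \lim_{t \to \infty} \mathbb{P}(A_t).
\end{equation*}
Siegmund duality \eqref{dualitycsbp3} shows that a.s.\ $A_t$ holds iff the subordinator $X_{-t, 0}(\cdot)$ never takes a value in $[x, y)$, i.e.\ at its first passage of $x$ it overshoots $y$. Introducing the time-rescaled subordinator $\tilde{X}^{(t)}(s) := X_{-t, 0}(s/v_t(\lambda))$, whose range coincides with that of $X_{-t, 0}$ and whose Laplace exponent is $\phi_t(\mu) := v_t(\mu)/v_t(\lambda)$, the event $A_t$ is unchanged.

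Differentiating \eqref{v} with respect to $\mu$ gives $v'_t(\mu) = \Psi(v_t(\mu))/\Psi(\mu)$, hence
\begin{equation*}
\frac{\ddr}{\ddr \mu} \log v_t(\mu) = \frac{\Psi(v_t(\mu))}{v_t(\mu)\,\Psi(\mu)} \ \underset{t \to \infty}{\longrightarrow}\ \frac{\Psi'(0+)}{\Psi(\mu)},
\end{equation*}
using $v_t(\mu) \to 0$ together with $\Psi(v)/v \to \Psi'(0+)$ as $v \downarrow 0$. Integrating this convergence from $\lambda$ to $\mu$ gives $\phi_t(\mu) \to \kappa_\lambda(\mu)$ for every $\mu > 0$, so $\tilde{X}^{(t)}$ converges to the $\kappa_\lambda$-subordinator $W^{\lambda}$ in the sense of Laplace exponents (and hence in the Skorohod topology). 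Continuity of the first-passage distribution of the drift-free limit $W^{\lambda}$ at $y$ allows the passage to the limit in the non-visit event, yielding
\begin{equation*}
\lim_{t \to \infty} \mathbb{P}(A_t) = \mathbb{P}\bigl(W^{\lambda} \text{ never visits } [x, y)\bigr) = \mathbb{P}\bigl(\hat{W}^{\lambda}(x) = \hat{W}^{\lambda}(y)\bigr).
\end{equation*}
Combined with the almost-sure inclusion $\{\exists\, t, \hat{X}_t(x) = \hat{X}_t(y)\} \subseteq \{\hat{W}^{\lambda}(x) = \hat{W}^{\lambda}(y)\}$ obtained in the first direction, the equality of probabilities forces the two events to coincide almost surely.

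The principal obstacle is this last limit passage. It rests on (i) the identification $\phi_t \to \kappa_\lambda$, which leans on subcriticality $\Psi'(0+) > 0$ and the ODE for $v_t$, and (ii) the continuity of the overshoot distribution of $W^{\lambda}$, guaranteed because $W^{\lambda}$ is drift-free (no creeping at~$y$), which validates the Portmanteau-type passage at the boundary point $y$.
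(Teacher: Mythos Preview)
Your strategy is sound and the two implications are correctly identified, but the route you take for the converse is substantially more indirect than the paper's.

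\textbf{Easy direction.} Your argument is correct, but you overcomplicate it by invoking Theorem~\ref{mainthm} and then patching the case $x \in J^{\lambda}$. Since Proposition~\ref{ancestralpartition} is stated for \emph{fixed} $x,y$, it suffices to appeal directly to Lemma~\ref{asconvlambda}, which gives $v_t(\lambda)\hat{X}_t(x) \to \hat{W}^{\lambda}(x)$ almost surely for each fixed $x$ (the null set may depend on~$x$).

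\textbf{Converse: comparison with the paper.} The paper's argument (Lemma~\ref{coincide}) is pathwise and essentially one line. From the identity
\[
\hat{W}^{\lambda}(x)=c(\lambda,y_0)\,e^{-\Psi'(0+)\,\hat{T}^{x}_{y_0}}
\]
of Lemma~\ref{asconvlambda}, the equality $\hat{W}^{\lambda}(x)=\hat{W}^{\lambda}(y)$ forces $\hat{T}^{x}_{y_0}=\hat{T}^{y}_{y_0}$ almost surely for any $y_0>x\vee y$. Since $\hat{X}$ has no positive jumps, at the common first-passage time $T:=\hat{T}^{x}_{y_0}$ one has $\hat{X}_T(x)=y_0=\hat{X}_T(y)$, and the lineages have coalesced. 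No weak-convergence machinery is needed.

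Your approach, by contrast, compares \emph{probabilities}: having the inclusion $\{\exists\,t:\hat{X}_t(x)=\hat{X}_t(y)\}\subseteq\{\hat{W}^{\lambda}(x)=\hat{W}^{\lambda}(y)\}$, you match their probabilities via a limit. The rescaling $\tilde{X}^{(t)}(s)=X_{-t,0}(s/v_t(\lambda))$ and the convergence $\phi_t\to\kappa_\lambda$ are correct (this is Lemma~\ref{scalelambda}), and the passage to the limit does go through: drift-freeness of $W^{\lambda}$ makes fixed points polar for its range, so that a.s.\ $W^{\lambda}(\tau_x-)<x<W^{\lambda}(\tau_x)$ and $W^{\lambda}(\tau_x)\neq y$; the overshoot functional is then a.s.\ continuous and the continuous mapping theorem applies. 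This is valid but trades the paper's pathwise identification for a functional weak-convergence argument requiring careful bookkeeping of continuity sets.

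\textbf{What each buys.} The paper exploits the first-passage-time representation of $\hat{W}^{\lambda}$ to give a short almost-sure argument. Your method is more robust in principle---it would transfer to settings lacking such an explicit representation---but here it is the harder road, and you should make explicit that both the no-creeping and the no-atom-at-$y$ properties (each a consequence of $W^{\lambda}$ being drift-free, cf.\ \cite[Proposition~1.9]{Bertoin1999}) are what legitimise the limit passage.
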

%We shall observe later, see Proposition \ref{C(infty)}, that the subintervals  of the partition $\mathscr{A}$ are not only families whose ancestral lineages have same asymptotic behaviours but those with a same common ancestor. 
%We shall actually see  that $\mathscr{A}$ coincides with the genuine ancestral partition into common ancestors.
Theorem \ref{mainthm} and Proposition \ref{ancestralpartition} complete the results obtained under Grey's condition, in Foucart et al. \cite[Sections 4 and 5.3]{FoMaMa2019}, on the long-term behavior of the ancestral lineages. When $\int^{\infty}\frac{\ddr u}{\Psi(u)}<\infty$ (Grey's condition), the process $(\hat{W}^{\infty}(x),x\geq 0)$ is the left-continuous inverse of a compound Poisson process whose jump law is $\nu_\infty$. Therefore it has piecewise constant sample paths with intervals of constancy of i.i.d lengths with law $\nu_\infty$. The partition $\mathscr{A}$ is thus constituted of i.i.d. families with lengths of law $\nu_\infty$, i.e of the form \[\mathscr{A}=\big((0,x_1],(x_1,x_2], \ldots \big) \text{ a.s.},\] 
where $(x_{i},i\geq 1)$ is a random renewal process with jump law $\nu_{\infty}$. Set $x_0=0$. The ancestral lineage of a given family $(x_{i-1},x_{i}]$, for $i\geq 1$, escapes towards $\infty$ at speed \[t\mapsto \frac{\hat{W}^{\infty}(x_{i})}{v_t(\infty)}.\]  Conditionally on $x_i$, we have that $\hat{W}^{\infty}(x_{i})=\mathbbm{e}_1+\ldots +\mathbbm{e}_i$, where the random variables $\mathbbm{e}_1,\mathbbm{e}_2,...$ are independent and standard exponential.
The following figure provides a symbolic representation of the families, their lineages and the process $\hat{W}_\infty$, under Grey's condition.
\begin{figure}[!ht]
\centering \noindent
\includegraphics[height=.20 \textheight]{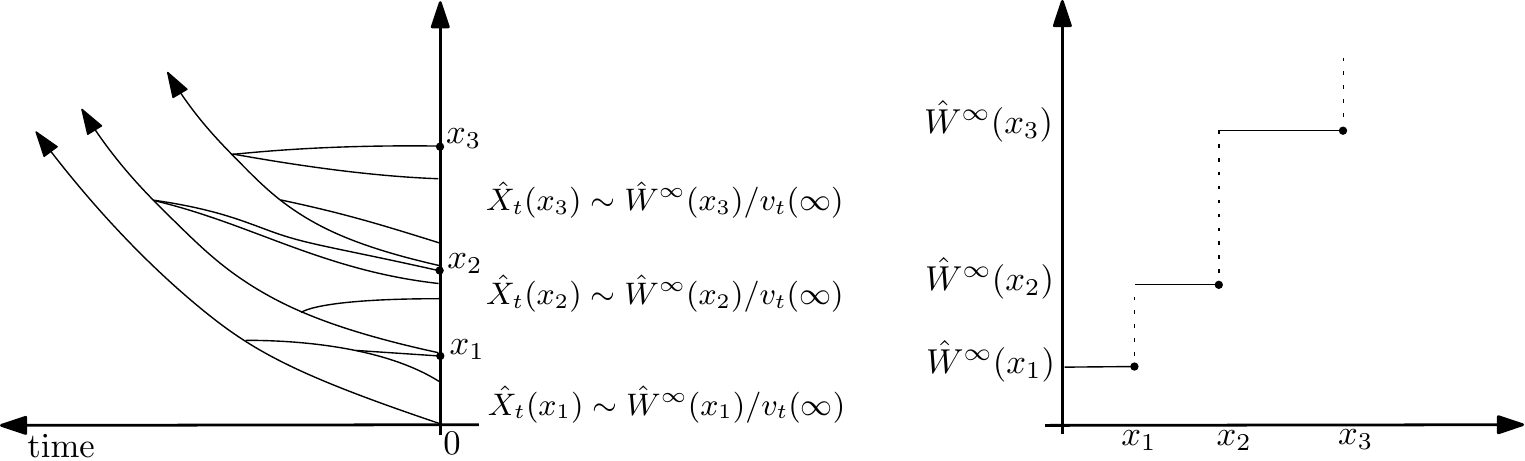}
\caption{Symbolic representation of ancestral families under Grey's condition}
\label{renewal}
\end{figure}

Coalescences between the ancestral lineages inside each family are possibly multiple and can be described using the notion of consecutive coalescents, see \cite[Section 5]{FoMaMa2019}. 

Conditionally given $\mathscr{A}$, for any $j>i$, when $\hat{W}^{\infty}(x_j)-\hat{W}^{\infty}(x_{j-1})=\mathbbm{e}_j>\hat{W}^{\infty}(x_i)-\hat{W}^{\infty}(x_{i-1})=\mathbbm{e}_i$, the ancestor of the family $(x_{j-1},x_j]$ is found \textit{asymptotically earlier} in the past than that of $(x_{i-1},x_i]$. We may thus interpret the exponential random variables $(\mathbbm{e}_i,i\geq 1)$ as measures of ages of the families in $\mathscr{A}$. In the figure above, the divergence towards $\infty$ of the ancestral lineage of $(x_2,x_3]$ is quicker than that of $(x_1,x_2]$, namely $\mathbbm{e}_3>\mathbbm{e}_2$, and
the family $(x_1,x_2]$ is older than $(x_2,x_3]$.

 % the quasi-stationary distribution of the CSBP. %It completes some preliminary observations in law established in \cite[Proposition 2.1]{FoMaMa2019} when Grey's condition holds (namely when families get extinct in finite time almost surely).  

When $\int^{\infty}\frac{\ddr u}{\Psi(u)}=\infty$, the description is more involved since the process $(\hat{W}^{\lambda}(x),x\geq 0)$ has singular continuous paths and any fixed subinterval of $(0,\infty)$ of finite length contains infinitely many small families with positive probability. More precisely, the ancestral families are separated by points $x_i,i\in I$, in the support  $\mathscr{S}$ of the associated singular Stieltjes measure $\ddr \hat{W}^{\lambda}$.
\begin{proposition}\label{dimension} Set $\Psi'(\infty):=\underset{u\rightarrow \infty}{\lim} \frac{\Psi(u)}{u}\in (0,\infty]$. For any $x>0$, the Hausdorff dimension of $\mathscr{S}\cap [0,x]$ is 
 \begin{equation}\label{hausdorff}\mathrm{dim}_{H}(\mathscr{S}\cap [0,x])=\frac{\Psi'(0+)}{\Psi'(\infty)}\in [0,1) \text{ a.s.}
\end{equation} 
\end{proposition}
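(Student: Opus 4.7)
My plan is to recognize $\mathscr{S}$ as the closed range of the subordinator $W^{\lambda}$ and then apply the classical Blumenthal--Getoor formula for the Hausdorff dimension of the range of a subordinator. Since $\hat{W}^{\lambda}$ is by construction the left-continuous inverse of the drift-free subordinator $W^{\lambda}$, the open intervals on which $\hat{W}^{\lambda}$ is constant are precisely the jump intervals $(W^{\lambda}(y-),W^{\lambda}(y))$ traversed by $W^{\lambda}$. Hence the support of the Stieltjes measure $\ddr\hat{W}^{\lambda}$ coincides almost surely with the closed range $\overline{\{W^{\lambda}(y):y\geq 0\}}$, and for any fixed $x>0$ the set $\mathscr{S}\cap[0,x]$ equals, up to a single boundary point which is irrelevant for Hausdorff dimension, the range of $W^{\lambda}$ stopped at the (a.s. finite) first-passage time above $x$.

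Next I would invoke the theorem of Blumenthal and Getoor: for a subordinator with Laplace exponent $\kappa$, the Hausdorff dimension of its closed range on any bounded time interval is almost surely the upper index $\beta(\kappa)=\limsup_{\theta\to\infty}\log\kappa(\theta)/\log\theta\in[0,1]$. Applied to $W^{\lambda}$, this reduces the problem to computing $\beta(\kappa_{\lambda})$ from the explicit formula \eqref{laplaceexponent}. Writing, for $\theta>\lambda$,
\[
\frac{\log\kappa_{\lambda}(\theta)}{\log\theta}=\frac{\Psi'(0+)}{\log\theta}\int_{\lambda}^{\theta}\frac{\ddr u}{\Psi(u)},
\]
I would then distinguish two cases. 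If Grey's condition \eqref{Greycondition} holds the integral remains bounded and the ratio tends to $0$; in that regime $\Psi'(\infty)=\infty$ is forced, since $\Psi(u)/u$ is non-decreasing to $\Psi'(\infty)$ so $\Psi(u)\leq\Psi'(\infty)u$ would imply $\int^{\infty}\ddr u/\Psi(u)=\infty$, contradicting Grey's condition. Hence $\beta(\kappa_{\lambda})=0=\Psi'(0+)/\Psi'(\infty)$. If Grey's condition fails, the numerator diverges and one application of L'H\^{o}pital's rule gives
\[
\lim_{\theta\to\infty}\frac{\log\kappa_{\lambda}(\theta)}{\log\theta}=\Psi'(0+)\lim_{\theta\to\infty}\frac{\theta}{\Psi(\theta)}=\frac{\Psi'(0+)}{\Psi'(\infty)},
\]
with the convention $1/\infty=0$. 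The strict upper bound $\Psi'(0+)/\Psi'(\infty)<1$ then follows from strict convexity of $\Psi$ (which holds because $\sigma>0$ or $\pi\neq 0$), yielding $\Psi'(\infty)>\Psi'(0+)$.

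The main obstacle is the identification step in the singular-continuous regime $\int^{\infty}\ddr u/\Psi(u)=\infty$: there $\hat{W}^{\lambda}$ has no jumps and one must verify carefully that its intervals of constancy correspond bijectively to the jumps of $W^{\lambda}$, so that no additional points contribute to the support of $\ddr\hat{W}^{\lambda}$ beyond the closure of the range of $W^{\lambda}$. Once this bookkeeping is settled, the dimension computation is just Blumenthal--Getoor combined with the elementary logarithmic limit above.
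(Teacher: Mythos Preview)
Your approach coincides with the paper's: both identify $\mathscr{S}$ with the closed range of the drift-free subordinator $W^{\lambda}$ and then invoke the classical formula for the Hausdorff dimension of the range of a subordinator in terms of the asymptotics of $\log\kappa_\lambda(\theta)/\log\theta$ as $\theta\to\infty$ (the paper cites Theorem~5.1 in Bertoin's Saint-Flour notes rather than Blumenthal--Getoor, but this is the same result). The computations of the index are also essentially the same; the paper splits cases according to whether $\Psi'(\infty)$ is finite or infinite and uses a direct asymptotic estimate, while you split on Grey's condition and use L'H\^{o}pital, but these are equivalent reorganizations of the same argument.

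One small correction: the Hausdorff dimension of the range of a subordinator is given by the \emph{lower} index $\liminf_{\theta\to\infty}\log\kappa(\theta)/\log\theta$, not the upper index $\limsup$. This does not affect your conclusion here, because your L'H\^{o}pital step (and the Grey-case observation that the integral stays bounded) actually shows that the full limit exists, so the lower and upper indices coincide; but the general statement you quote is inaccurate and would fail for subordinators whose Laplace exponent oscillates between two powers.
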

From \eqref{branchingmechanism}, one sees that $\Psi'(\infty)=\frac{\sigma^2}{2}\cdot \infty+\gamma+\int_{0}^{\infty}x\pi(\ddr x)\in (0,\infty]$. In the case of a CSBP with unbounded variation, namely with $\sigma>0$ or $\int_{0}^{1}x\pi(\ddr x)=\infty$, one has $\Psi'(\infty)=\infty$ and the Hausdorff dimension of $\mathscr{S}$ is zero. In the bounded variation case, \eqref{hausdorff} can be rewritten as
\[\mathrm{dim}_{H}(\mathscr{S}\cap [0,x])=\frac{\gamma}{\gamma+\int_{0}^{\infty}x\pi(\ddr x)} \text{ a.s}.\]
\begin{example} Let $\Psi$ be the branching mechanism with drift $\gamma=1$ and L\'{e}vy measure  $\pi(\ddr x)=x^{-\alpha-1}e^{-x}\ddr x$ with $\alpha\in (0,2)$.
\begin{itemize}
\item[i)] If $\alpha\in (1,2)$, then Grey's condition \eqref{Greycondition} holds, $\Psi'(\infty)=\infty$, and the ancestral families $\mathscr{A}$ are separated along a discrete set $\mathscr{S}$ and $\mathrm{dim}_{H}(\mathscr{S}\cap [0,x])=0$ a.s.
\item[ii)] If $\alpha=1$ (Neveu case), then  Grey's condition \eqref{Greycondition} does not hold, $\Psi'(\infty)=\infty$, and
the ancestral families $\mathscr{A}$ are separated along the set $\mathscr{S}$ and $\mathrm{dim}_{H}(\mathscr{S}\cap [0,x])=0$ a.s.
\item[iii)] If $\alpha<1$, then Grey's condition \eqref{Greycondition} does not hold, $\Psi'(\infty)<\infty$, and the ancestral families $\mathscr{A}$ are separated along the set $\mathscr{S}$ and $\mathrm{dim}_{H}(\mathscr{S}\cap [0,x])=\frac{1}{1+\Gamma(1-\alpha)}$ a.s.
\end{itemize}
\end{example}
In general, inverse subordinators do not have the Markov property. The joint density of the finite-dimensional marginals of  $(\hat{W}^{\lambda}(x),x\geq 0) $ are thus rather involved. We refer to the works of Lageras \cite{lageras2005} and Veillette and Taqqu \cite{VEILLETTE2010697} for information on inverse subordinators. The following proposition is a side result on the one-dimensional laws of the limiting process $(\hat{W}^{\lambda}(x),x\geq 0)$. Recall that $\nu_{\lambda}$ denotes the L\'{e}vy measure of the subordinator $(W^{\lambda}(x),x\geq 0)$. Set $\bar{\nu}_\lambda$ the tail of the measure $\nu_\lambda$: for any $x\geq 0$, $\bar{\nu}_\lambda(x):=\nu_{\lambda}\big((x,\infty)\big)$.
\begin{proposition}\label{densityprop}
The law of $\hat{W}^{\lambda}(x)$ admits the density $g^{\lambda}_x$ defined on $(0,\infty)$ by
\begin{equation}\label{density} g^{\lambda}_x(u):= \int_{0}^{x}\bar{\nu}_\lambda(x-z)\mathbb{P}(W^{\lambda}(u)\in \ddr z).\end{equation}
When $\int^{\infty}\frac{\ddr u}{\Psi(u)}<\infty$, $\hat{W}^{\infty}(x)$ has density $g^{\infty}_x$ :
\begin{equation}\label{densitygrey} g^{\infty}_x(u):= e^{-u}\sum_{n=0}^{\infty}\frac{u^{n}}{n!}\int_{0}^{x}\bar{\nu}_\infty(x-z)\nu_{\infty}^{\star n}(\ddr z).
\end{equation}
\end{proposition}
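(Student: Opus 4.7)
The plan is to reduce the computation of the density of $\hat{W}^{\lambda}(x)$ to a first-passage calculation for the subordinator $W^{\lambda}$ and then apply a compensation-formula argument. Since $W^{\lambda}$ is defined as the right-continuous inverse of the non-decreasing left-continuous process $\hat{W}^{\lambda}$, a direct check from $W^{\lambda}(u)=\inf\{x'\geq 0:\hat{W}^{\lambda}(x')>u\}$ together with the left-continuity of $\hat{W}^{\lambda}$ yields the equivalence
\[
\{\hat{W}^{\lambda}(x)\leq u\}=\{W^{\lambda}(u)\geq x\}.
\]
Since a subordinator has no fixed discontinuities, $\mathbb{P}(W^{\lambda}(u)=x)=0$ for every $x>0$, hence $\mathbb{P}(\hat{W}^{\lambda}(x)\leq u)=\mathbb{P}(W^{\lambda}(u)>x)$, and the density $g^{\lambda}_x(u)$ is obtained as the partial derivative in~$u$ of the right-hand side.

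The core step is the classical first-passage identity for a drift-free subordinator: because $W^{\lambda}$ has no linear drift, it crosses level $x$ only by a jump, and the compensation formula applied to the Poisson point process of its jumps gives
\[
\mathbb{P}(W^{\lambda}(u)>x)=\int_0^u ds\int_{[0,x]}\mathbb{P}(W^{\lambda}(s)\in dz)\,\bar\nu_{\lambda}(x-z),
\]
since conditionally on $W^{\lambda}(s-)=z\leq x$ the instantaneous intensity for a crossing jump is $\bar\nu_{\lambda}(x-z)$. Differentiating in $u$ produces exactly formula \eqref{density}.

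The Grey case is then a matter of substitution. From the identity $\kappa_{\infty}(\theta)=1-\int_0^{\infty}e^{-\theta z}\nu_{\infty}(dz)$ the subordinator $W^{\infty}$ is a compound Poisson process of unit rate with jump law $\nu_{\infty}$, so
\[
\mathbb{P}(W^{\infty}(u)\in dz)=e^{-u}\sum_{n=0}^{\infty}\frac{u^n}{n!}\,\nu_{\infty}^{\star n}(dz),
\]
and inserting this expression into \eqref{density} yields \eqref{densitygrey}.

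The main obstacle is to justify the first-passage identity rigorously when Grey's condition fails, since $\nu_{\lambda}$ is then infinite and the small jumps of $W^{\lambda}$ accumulate. I would handle this by truncating $\nu_{\lambda}$ to its restriction to $[\varepsilon,\infty)$, applying the compensation formula to the resulting finite-activity compound Poisson subordinator (where it is elementary), and passing to the limit $\varepsilon\downarrow 0$ via monotone and dominated convergence; the key point is that the contribution of the compensated small-jump martingale to the crossing of a strictly positive level is negligible in the limit. Alternatively, one can invoke the standard joint law of the undershoot, overshoot, and first-passage time for a drift-free subordinator, from which the formula can be read off directly.
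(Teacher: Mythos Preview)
Your proof is correct but follows a genuinely different route from the paper's. The paper computes the double Laplace transform $\int_0^{\infty}\mathbb{E}[e^{-\theta \hat{W}^{\lambda}(x)}]e^{-qx}\,\ddr x$ by randomising $x$ with an independent exponential variable $\mathbbm{e}_q$, using the duality $\mathbb{P}(W^{\lambda}(\mathbbm{e}_\theta)\geq \mathbbm{e}_q)=\mathbb{P}(\mathbbm{e}_\theta\geq \hat{W}^{\lambda}(\mathbbm{e}_q))$ to rewrite this as $\frac{1}{q}\frac{\kappa_\lambda(q)}{\kappa_\lambda(q)+\theta}$, and then inverting both transforms via the identity $\kappa_\lambda(q)/q=\int_0^{\infty}e^{-qu}\bar{\nu}_\lambda(u)\,\ddr u$. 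Your argument instead goes directly through the first-passage representation $\mathbb{P}(\hat{W}^{\lambda}(x)\leq u)=\mathbb{P}(W^{\lambda}(u)>x)$ and the compensation formula for the jump point process. Your approach is more probabilistic and sidesteps Laplace inversion entirely; the paper's is purely analytic and self-contained (it does not need to invoke the compensation formula or the overshoot/undershoot law). Two small remarks: first, ``no fixed discontinuities'' is not the right justification for $\mathbb{P}(W^{\lambda}(u)=x)=0$ (that statement concerns jump \emph{times}); the correct reason, as the paper itself uses elsewhere, is that a drift-free subordinator almost surely misses every fixed level $x>0$. Second, your truncation detour for infinite $\nu_\lambda$ is unnecessary: the compensation formula for Poisson point processes holds without any finiteness assumption on the intensity, so the first-passage identity is valid directly.
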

\section{Proofs}\label{proofs}
We first establish a general result of independent interest relating $\theta$-invariant functions of stochastically monotone processes, with $\theta$-invariant measures of their dual processes, see Section \ref{generalresult}.  We then apply this result in our setting. The asymptotics of a certain $\theta$-invariant function for the dual process $\hat{X}$ is studied. It enables to show an almost sure convergence for the process $(\hat{X}_t(x),t\geq 0)$ started from a fixed value $x$, see Lemma \ref{asconvlambda}. We study the associated limiting process in $x$ and show that it satisfies the properties stated in Theorem \ref{mainthm}, see Lemma \ref{modification}. We will see from the proof of Lemma \ref{asconvlambda} and Lemma \ref{coincide} that Proposition \ref{ancestralpartition} holds true. Proposition \ref{dimension} will be a consequence of Theorem \ref{mainthm}. The proof of Theorem \ref{mainthm} is divided into several lemmas. The first are needed to show the almost sure convergence towards some positive random variable $\hat{W}^{\lambda}(x)$. Finally, we identify the law of the process $(\hat{W}(x),x\geq 0)$.
\subsection{Invariant functions of stochastically monotone Markov processes}\label{generalresult}

In this section, we consider a ``general" standard Markov process $X:=(X_t,t\geq 0)$ with state space $[0,\infty)$, and denote by $(X_t(y),t\geq 0)$ the process started from $y\in [0,\infty)$.
%For the sake of simplicity, we assume the process to be non-explosive. 
Recall that the process  $X$ is said to be stochastically monotone if for any $t\geq 0$ and $x\in [0,\infty)$, the map $y\mapsto \mathbb{P}(X_t(y)\geq x)$ is non-decreasing.
%\[\text{ if } x\leq y \text{ then } \mathbb{P}(X_t(x)\geq z)\leq  \mathbb{P}(X_t(y)\geq z).\]
%Those processes form a broad class of Markov processes.  
Siegmund \cite{MR0431386} has established that if the process $X$ is  stochastically monotone, non-explosive or with boundary $\infty$ absorbing,
and that for any fixed $t$ and $z$, the map $y\mapsto \mathbb{P}(X_t(y)\geq z)$ is right-continuous then there exists a Markov process $\tilde{X}$, the so-called Siegmund dual process, such that for any $t$ and $x,y$
\begin{equation}\label{dualitySiegmund}
\mathbb{P}(X_t(y)\geq x)=\mathbb{P}(\hat{X}_t(x)\leq y).
\end{equation}
%\begin{remark} Notice that if the process $X$ is Feller, then for any $z\in (0,\infty)$, $X_t(x)\underset{x\rightarrow z}{\longrightarrow} X_t(z)$ in law, see e.g. \cite[Lemma 19.3, page 369]{Kallenberg}, and the map $x\mapsto \mathbb{P}(X_t(x)\geq y)$ is right-continuous.
%\end{remark}
The latter identity can be rewritten as
\begin{equation}\label{duality}\mathbb{P}\big(\hat{X}_t(x)>y\big)=\mathbb{P}\big( x>X_t(y)\big).
\end{equation}
%The process $(\hat{X}_t(x),t\geq 0,x\in [0,\infty))$, called Siegmund dual of $(X_t,t\geq 0)$, is a Markov process. 
Our first result shows how to find fundamental martingales for the Siegmund dual process $(\hat{X}_t(x),t\geq 0)$ of any stochastically monotone Markov process $(X_t(x),t\geq 0)$.  Recall $\hat{T}_y$ defined in \eqref{firstpassagetime}. 
\begin{theorem}[Invariant functions of $\hat{X}$]\label{invariantfunction} 
Let $(P_t,t\geq 0)$ be the semigroup of the process $(X_t,t\geq 0)$. Let $\theta\in \mathbb{R}$. If $\mu_{\theta}$ is a positive Borel measure on $(0,\infty)$ satisfying for any  $t\geq 0$, $\mu_{\theta}P_t=e^{\theta t}\mu_{\theta}$, then the functions
$x\mapsto \mu_{\theta}([0,x))$ and $x\mapsto \mu_{\theta}((x,\infty))$, provided they are well-defined, are $\theta$-invariant functions, namely functions $f_\theta$ such that 
%$\bar{\nu}_\theta(x):=\mu_{\theta}\big((x,\infty)\big)<\infty$,
% $f_{\theta}$ defined on $[0,\infty)$ by \[f_{\theta}:y\mapsto \bar{\nu}_{\theta}(y)\]is non-increasing, right-continuous and satisfies: 
 for any $t\geq 0$ and $x\in [0,\infty)$, 
\[\mathbb{E}[f_{\theta}(\hat{X}_t(x))]=e^{\theta t}f_{\theta}(x),\] so that 
\begin{equation}\label{key}\big(e^{-\theta t}f_{\theta}(\hat{X}_t(x)),t\geq 0\big) \text{ is a martingale}.
\end{equation}
In particular, if the process $(\hat{X}_t,t\geq 0)$ has no positive jumps and $\mu_{\theta}$ is finite on $[0,x)$ for all $x>0$, then $f_\theta:x\mapsto \mu_{\theta}([0,x))$ is a well-defined increasing and left-continuous function, and for  all $y\geq x\geq 0$,
\begin{equation}\label{laplacetransformTy}
\mathbb{E}_x[e^{-\theta \hat{T}_y}]=\frac{\mu_{\theta}([0,x))}{\mu_{\theta}([0,y))}.
\end{equation}
\end{theorem}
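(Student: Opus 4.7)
Plan. The engine of the proof is a Fubini + duality calculation. First I would rewrite
\[f_\theta(\hat{X}_t(x)) = \mu_\theta([0,\hat{X}_t(x))) = \int_{(0,\infty)} \mathbbm{1}_{\{\hat{X}_t(x) > y\}} \, \mu_\theta(\mathrm{d}y),\]
take expectations, swap the two integrations by Tonelli, and apply the Siegmund duality $\mathbb{P}(\hat{X}_t(x) > y) = \mathbb{P}(X_t(y) < x)$ of \eqref{duality} to obtain
\[\mathbb{E}[f_\theta(\hat{X}_t(x))] = \int P_t(y,[0,x)) \, \mu_\theta(\mathrm{d}y) = (\mu_\theta P_t)([0,x)).\]
The hypothesis $\mu_\theta P_t = e^{\theta t} \mu_\theta$ then collapses the right-hand side to $e^{\theta t} \mu_\theta([0,x)) = e^{\theta t} f_\theta(x)$, which is the asserted $\theta$-invariance. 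For $g_\theta(x) = \mu_\theta((x,\infty))$ the computation is symmetric, now using the complementary form $\mathbb{P}(\hat{X}_t(x) < y) = \mathbb{P}(X_t(y) > x)$, which follows from \eqref{duality} together with a mild regularity of the dual transition kernels.

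The martingale statement \eqref{key} is then immediate from the Markov property: for $s \leq t$,
\[\mathbb{E}\big[e^{-\theta t} f_\theta(\hat{X}_t(x)) \,\big|\, \mathcal{F}_s\big] = e^{-\theta s}\, e^{-\theta(t-s)}\, \mathbb{E}_{\hat{X}_s(x)}[f_\theta(\hat{X}_{t-s})] = e^{-\theta s} f_\theta(\hat{X}_s(x)),\]
where the second equality is the identity just established applied at the (random) starting point $\hat{X}_s(x)$. Monotonicity of $f_\theta(x) = \mu_\theta([0,x))$ is obvious and its left-continuity is monotone convergence using the finiteness hypothesis on $\mu_\theta$.

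For the first-passage formula \eqref{laplacetransformTy} I would apply optional stopping to the nonnegative martingale $M_t := e^{-\theta t} f_\theta(\hat{X}_t(x))$ at the bounded stopping time $\hat{T}_y \wedge t$. The no-positive-jumps hypothesis forces $\hat{X}_{\hat{T}_y}(x) = y$ on $\{\hat{T}_y < \infty\}$ (the process creeps up to the level $y$), so $f_\theta(\hat{X}_{\hat{T}_y}(x)) = f_\theta(y)$. Since $\hat{X}_{s \wedge \hat{T}_y}(x) \leq y$ and $f_\theta(y) < \infty$, the stopped martingale is bounded by $f_\theta(y)$ when $\theta \geq 0$, so dominated convergence as $t \to \infty$ yields $f_\theta(x) = f_\theta(y)\, \mathbb{E}_x\big[e^{-\theta \hat{T}_y} \mathbbm{1}_{\{\hat{T}_y < \infty\}}\big]$, from which \eqref{laplacetransformTy} follows (with the usual convention $e^{-\theta\cdot\infty}=0$ when $\theta>0$).

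The principal obstacle is the optional-stopping step for arbitrary sign of $\theta$: the boundedness argument above covers $\theta \geq 0$ cleanly, whereas for $\theta < 0$ one would need a separate uniform-integrability or change-of-measure input (a range that may fall outside the paper's intended applications). A secondary, benign nuisance is the strict-versus-non-strict form of the Siegmund duality required for $g_\theta$, which can be sidestepped by invoking the absence of fixed discontinuities of $X$ and $\hat{X}$ at given levels.
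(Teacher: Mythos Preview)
Your proposal is correct and follows essentially the same route as the paper: the Fubini--duality computation for $f_\theta$, the martingale property via Markov, and optional stopping at $t\wedge\hat{T}_y$ with the bound $f_\theta(\hat{X}_{t\wedge\hat{T}_y}(x))\le f_\theta(y)$ are exactly the paper's steps. Your caveat about $\theta<0$ is well placed---the paper's dominated-convergence argument, like yours, is clean only for $\theta\ge 0$, and indeed the paper only ever invokes \eqref{laplacetransformTy} with $\theta>0$ in the sequel.
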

\begin{remark} 
%\begin{enumerate}
%\item 
A measure $\mu_{\theta}$ verifying $\mu_{\theta}P_t=\theta \mu_{\theta}$ is sometimes called an \textit{eigen-measure} or a $\theta$-invariant measure. If $\theta<0$ and  $\mu_{\theta}$ is a probability measure, then $\mu_{\theta}$ is a quasi-stationary distribution whose rate of decay is $\theta$. Indeed, let $T$ be a killing time, typically  $T=\inf\{t>0: X_t=0\}$, then for any $t>0$, $\mu_{\theta}P_t=\theta \mu_{\theta}$ is equivalent to
$\mathbb{P}_{\mu_{\theta}}(X_t\in \cdot|T>t)=\mu_{\theta}(\cdot)$ and $\mathbb{P}_{\mu_{\theta}}(T>t)=e^{-\theta t}$.
\end{remark}
%\item 
\begin{remark} 
Let $\mathcal{L}$ be the generator of the process $(X_t,t\geq 0)$.  The Kolmogorov forward equation entails that the condition $\mu_{\theta}P_t=e^{\theta t}\mu_{\theta}$ for all $t\geq 0$,  is equivalent to $\mu_{\theta}\mathcal{L}=\theta \mu_{\theta}$ where $\mu_{\theta}\mathcal{L}$ is by definition the measure such that $\langle \mu_{\theta}\mathcal{L},f\rangle=\int \mathcal{L}f(x)\mu_{\theta}(\ddr x)$ for any function $f\in C^{2}_b((0,\infty))$.
\end{remark}

%\end{enumerate}

\begin{proof}
Set $f_\theta(x)=\mu_{\theta}([0,x))$ for all $x>0$. For any $x\in (0,\infty)$ and any $t\geq 0$,
\begin{align*}
\hat{P}_tf_{\theta}(x)=\mathbb{E}\left[f_{\theta}\big(\hat{X}_t(x)\big)\right]&=\mathbb{E}\left[\int \mathbbm{1}_{\{\hat{X}_t(x)>y\}}\mu_{\theta}(\ddr y)\right]\\
&=\mathbb{E}\left[\int \mathbbm{1}_{\{x> X_t(y)\}}\mu_{\theta}(\ddr y)\right] \text{ by the duality relation } \eqref{duality}\\
&=\mu_{\theta}P_t\big([0,x)\big)=e^{\theta t}\mu_{\theta}\big([0,x)\big)=e^{\theta t}f_{\theta}(x).
\end{align*}
The martingale property \eqref{key} follows readily from the Markov property. Note that the map $f_{\theta}:x\mapsto \mu_{\theta}([0,x))$ is left-continuous. We now apply the bounded optional stopping time theorem at time $t\wedge \hat{T}_y$:
\[\mathbb{E}\big[e^{-\theta t\wedge \hat{T}_y}f_{\theta}\big(\hat{X}_{t\wedge \hat{T}_y}(x)\big)\big]=f_{\theta}(x).\]
Since $f_{\theta}$ is non-decreasing and $\hat{X}_{t\wedge \hat{T}_y}(x)\leq y$ a.s, one has for any $t\geq 0$, $f_{\theta}\big(\hat{X}_{t\wedge \hat{T}_y}(x)\big)\leq f_{\theta}(y)$. On the event $\{\hat{T}_y<\infty\}$, the left-continuity of $f_{\theta}$ and the absence of negative jumps in the process $(\hat{X}_t,t\geq 0)$ ensure that $f_{\theta}\big(\hat{X}_{t\wedge \hat{T}_y}(x)\big)\underset{t\rightarrow \infty}{\longrightarrow} f_{\theta}(y)$. This yields 
\[f_{\theta}(x)=\underset{t\rightarrow \infty}{\lim} \mathbb{E}\big[e^{-\theta t\wedge \hat{T}_y}f_{\theta}\big(\hat{X}_{t\wedge \hat{T}_y}(x)\big)\big]=\mathbb{E}\big[e^{-\theta \hat{T}_y}f_{\theta}(y)\mathbbm{1}_{\{\hat{T}_y<\infty\}}\big],\]
which provides the identity \eqref{laplacetransformTy}.
\qed
\end{proof}
\begin{remark} 
Theorem \ref{invariantfunction} holds in general for any stochastically monotone Markov process. In particular, the process is not required to have one-sided jumps. The state space $[0,\infty]$ could also be replaced by a more general nice ordered state space. 
\end{remark}
\begin{remark}
Let $\hat{\mathcal{L}}$ denote the generator of the process $\hat{X}$. An invariant function $f_\theta$ for the semigroup of $\hat{X}$ can be thought as a solution to the equation $\hat{\mathcal{L}}f_\theta=\theta f_{\theta}$. However, when the process has jumps, $\hat{\mathcal{L}}$ is an integro-differential operator and no general theory allows one for identifying solutions of this equation. Lemma \ref{invariantfunction} reveals that for stochastically monotone processes finding a $\theta$-invariant function corresponds to finding a $\theta$-invariant measure for the dual process. This is reminiscent to a result of Cox and R\"osler \cite{MR724061}.
% where it is shown that the functional duality inverts the role of entrance  and exit laws.
%\item Let $\theta>0$. General theory of Markov processes ensures that any one-sided Feller regular process admits a unique increasing left-continuous function $h_{\theta}$ such that for any $x\leq y$
%\[\mathbb{E}_x[e^{-\theta \hat{T}_y}]=\frac{h_{\theta}(x)}{h_{\theta}(y)}.\]
%We refer for instance the reader to Cissé et al. \cite[Proposition 4.1]{} and to \cite{Hawkes}.
%We see therefore that $h_{\theta}$ and $1/f_{\theta}$ coincide. In particular, since $f_{\theta}$ is right-continuous, $h_{\theta}$  is continuous.
\end{remark}

\subsection{Application to CSBPs and their ancestral lineages}
Recall the definition of $(\hat{X}_t(x),t\geq 0)$ as the right-continuous inverse of $(X_{-t,0}(x),t\geq 0)$ and the duality relation \eqref{dualitycsbp}. We start by establishing a second duality relation, which will allow us to apply Theorem \ref{invariantfunction}.
\begin{lemma} For any $t\geq 0$, $x,y\in [0,\infty)$ \begin{equation}\label{duality2}\mathbb{P}\big(\hat{X}_t(x)>y\big)=\mathbb{P}\big( x>X_t(y)\big).
\end{equation}
\end{lemma}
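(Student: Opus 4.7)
The identity \eqref{duality2} to be proved is literally the same as equation \eqref{dualitycsbp2} already established in Lemma \ref{dualitylemma}, so the plan is simply to cite that earlier lemma. The reason for restating the relation in the current subsection is a conceptual one: in the form \eqref{duality2}, the duality between $(X_t,t\geq 0)$ and $(\hat{X}_t,t\geq 0)$ is \emph{exactly} the Siegmund duality \eqref{dualitySiegmund} that underlies Theorem \ref{invariantfunction}. Putting the identity in evidence here is what lets us invoke Theorem \ref{invariantfunction} directly in the CSBP setting.

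Should a self-contained argument be desired, one can reproduce the short derivation given in the proof of Lemma \ref{dualitylemma}. The plan is as follows. First, from the definition \eqref{dualflowdef} of the inverse flow, $\hat{X}_t(x) > y$ if and only if $X_{-t,0}(y) \leq x$. Second, since for fixed $y$ the subordinator $z \mapsto X_{-t,0}(z)$ has no almost sure fixed discontinuity at $y$, one has $\mathbb{P}(X_{-t,0}(y-) < X_{-t,0}(y)) = 0$, which combined with the analysis of the equality cases already carried out in the proof of Lemma \ref{dualitylemma} yields the almost sure identity $\{\hat{X}_t(x) > y\} = \{x > X_{-t,0}(y)\}$. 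Third, using stationarity of the flow, $X_{-t,0}(y)$ has the same law as $X_t(y)$, so taking probabilities produces \eqref{duality2}.

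There is no real obstacle in this proof; all the work has effectively been done in Lemma \ref{dualitylemma}. The substantive role of this lemma is downstream: together with the fact that $y \mapsto \mathbb{P}(X_t(y) \geq x)$ is non-decreasing (the CSBP is stochastically monotone because $X_t(y) = X'_t(y_1)+X''_t(y_2)$ for $y = y_1+y_2$ by the branching property \eqref{branching}), it certifies that the pair $(X,\hat{X})$ satisfies the hypotheses of Theorem \ref{invariantfunction}. Consequently, any $\theta$-invariant Borel measure $\mu_\theta$ for the CSBP semigroup will yield a left-continuous non-decreasing $\theta$-invariant function $f_\theta(x) := \mu_\theta([0,x))$ for $\hat{X}$, together with the associated martingale and the stopping-time identity \eqref{laplacetransformTy}. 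This is the handle that will be used in the subsequent lemmas to extract the almost-sure renormalisation of $(\hat{X}_t(x),t\geq 0)$ claimed in Theorem \ref{mainthm}.
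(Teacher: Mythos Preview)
Your proposal is correct and matches the paper's approach exactly: the lemma in Section~\ref{proofs} is a verbatim restatement of \eqref{dualitycsbp2} from Lemma~\ref{dualitylemma}, and the paper gives no separate proof here, the actual argument being the one already supplied in the proof of Lemma~\ref{dualitylemma} (which you accurately summarise). Your added remarks on why the restatement matters---namely that it casts $(\hat{X}_t)$ as the Siegmund dual of the CSBP so that Theorem~\ref{invariantfunction} applies---are precisely the content of the paper's remark following the lemma.
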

\begin{remark} We stress here on the strict inequalities. Lemma \ref{duality2} entails that  $(\hat{X}_t,t\geq 0)$ is the Siegmund dual, in the sense of Section \ref{generalresult}, of the CSBP $(X_t,t\geq 0)$.
\end{remark}

Recall the action \eqref{generatoronexponential} of the generator $\mathcal{L}$ on exponential functions. We now look for the $\theta$-invariant measures $\mu_{\theta}$ in our setting and  their Laplace transforms explicitly in terms of $\Psi$. The following Lemma holds for general branching mechanism (i.e not necessarily subcritical).
\begin{lemma} 
For any $\theta>0$, the map $c_\theta:q\mapsto e^{-\theta\int_{1}^{q}\frac{\ddr u}{\Psi(u)}}$ is the Laplace transform of a Borel measure $\mu_{\theta}$ on $(0,\infty)$. Moreover, the measure $\mu_\theta$ is $\theta$-invariant for the semigroup $(P_t,t\geq 0)$ of the CSBP $(X_t,t\geq 0)$. 
%Let $f_{\theta}(x)=\mu_{\theta}([0,x[)$ for any $x\geq 0$,  the Laplace transform of $f_{\theta}$ is \[\xi_{\theta}(q):=\int_{0}^{\infty}f_{\theta}(x)e^{-qx}\ddr x=\frac{c_{\theta}(\lambda)}{q}e^{\theta \int_{q}^{\lambda}\frac{\ddr u }{\Psi(u)}}.\] 
\end{lemma}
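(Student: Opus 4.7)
The plan is in two steps: first identify $c_\theta$ as the Laplace transform of some positive Borel measure $\mu_\theta$; then verify the eigenequation $\mu_\theta P_t = e^{\theta t}\mu_\theta$ by testing on exponentials, which by hypothesis form a core for $\mathcal{L}$.

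For the first step, I would observe that $c_\theta$ is a negative power of the Bernstein function $\kappa_1$ from \eqref{laplaceexponent} evaluated at $\lambda=1$: since $\kappa_1(q)=\exp(\Psi'(0+)\int_1^q \ddr u/\Psi(u))$, a direct comparison of exponents yields
\[
c_\theta(q)=\kappa_1(q)^{-\theta/\Psi'(0+)}.
\]
The function $\kappa_1$ is the Laplace exponent of a drift-free subordinator $(S_s,s\geq 0)$ with $\kappa_1(0+)=0$ (the vanishing follows from $\int_0^1 \ddr u/\Psi(u)=+\infty$, since $\Psi(u)\sim \Psi'(0+)u$ near zero). Setting $\alpha:=\theta/\Psi'(0+)>0$, the identity $x^{-\alpha}=\Gamma(\alpha)^{-1}\int_0^\infty s^{\alpha-1}e^{-sx}\,\ddr s$ combined with $\mathbb{E}[e^{-qS_s}]=e^{-s\kappa_1(q)}$ and Fubini give
\[
c_\theta(q)=\int e^{-qx}\mu_\theta(\ddr x),\qquad \mu_\theta(\ddr x):=\frac{1}{\Gamma(\alpha)}\int_0^\infty s^{\alpha-1}\mathbb{P}(S_s\in\ddr x)\,\ddr s,
\]
so that $\mu_\theta$ is a well-defined $\sigma$-finite Borel measure carried by $(0,\infty)$ (with at most an atom at $0$ under Grey's condition, where $S$ is compound Poisson).

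For the second step, since exponentials form a core it suffices to verify $\int P_t e_q\,\ddr\mu_\theta = e^{\theta t}\int e_q\,\ddr\mu_\theta$ for every $q>0$. By \eqref{semigroupX}, $P_te_q(x)=e^{-xv_t(q)}$, so the left-hand side equals $c_\theta(v_t(q))$. From \eqref{v}, i.e. $\int_{v_t(q)}^q \ddr u/\Psi(u)=t$, I would deduce
\[
\int_1^{v_t(q)}\frac{\ddr u}{\Psi(u)}=\int_1^q\frac{\ddr u}{\Psi(u)}-t,
\]
and therefore $c_\theta(v_t(q))=e^{\theta t}c_\theta(q)$. Two $\sigma$-finite Borel measures on $[0,\infty)$ whose Laplace transforms coincide on $(0,\infty)$ are equal, and this gives $\mu_\theta P_t=e^{\theta t}\mu_\theta$.

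The main obstacle is producing $\mu_\theta$, which amounts to showing that $c_\theta$ is completely monotone. The argument above recasts this as the standard fact from subordinator potential theory that $\phi^{-\alpha}$ is completely monotone for every Bernstein function $\phi$ with $\phi(0)=0$ and every $\alpha>0$. Beyond this, everything reduces to a direct calculation using the ODE $\ddr v_t(\lambda)/\ddr t=-\Psi(v_t(\lambda))$ and the identification of exponentials as a core; for truly general (non-subcritical) branching mechanisms a different Bernstein function would have to replace $\kappa_1$, but the template is the same.
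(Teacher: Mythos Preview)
Your argument is correct, but it differs from the paper's proof in both halves. For complete monotonicity, the paper simply asserts that $(-1)^n c_\theta^{(n)}\geq 0$ can be checked directly and invokes Bernstein's theorem; your route via the identification $c_\theta=\kappa_1^{-\theta/\Psi'(0+)}$ and the Gamma--subordination formula is more structural and yields the bonus of an explicit potential-type expression for $\mu_\theta$. The cost is twofold: you are making a forward reference, since the fact that $\kappa_1$ is a Bernstein function is only established later in Lemma~\ref{levymeasure} (the proof there is independent of the present lemma, so there is no circularity, but you should flag this); and your argument as written is tied to the subcritical case $\Psi'(0+)>0$, whereas the paper states the lemma for general branching mechanisms---you acknowledge this, but the paper's direct Bernstein verification does not carry this restriction. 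For the invariance, the paper checks the infinitesimal identity $\mu_\theta\mathcal{L}=\theta\mu_\theta$ via the ODE $-\Psi(q)c_\theta'(q)=\theta c_\theta(q)$ and then appeals to the Kolmogorov forward equation; your approach works directly at the semigroup level through the exact relation $c_\theta(v_t(q))=e^{\theta t}c_\theta(q)$ and the injectivity of the Laplace transform. This is arguably cleaner for an infinite measure $\mu_\theta$, since it sidesteps any justification of the forward equation in that setting (and, incidentally, the core hypothesis you mention is not actually used in your argument---uniqueness of Laplace transforms does all the work).
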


\begin{proof}
Recall that $\mathcal{L}$ denotes the generator of the CSBP $(X_t(x),t\geq 0)$.
Let $\theta \geq 0$. It is easily checked from the expression of $c_\theta$ that $(-1)^{n}c_{\theta}^{(n)}\geq 0$ on $(0,\infty)$. Bernstein theorem, see e.g. \cite[Theorem 12.b, page 161]{Widder}, guarantees that there exists  a certain Borel measure $\mu_{\theta}$ (possibly infinite) on $[0,\infty)$ such that $c_\theta(q)=\int_{(0,\infty)}e^{-qx}\mu_{\theta}(\ddr x)$ for all $q>0$. We now check that the measure $\mu_{\theta}$ solves $\mu_{\theta}\mathcal{L}=\theta \mu_{\theta}$. Let $q>0$, recall that $\lambda>0$ is fixed. Observe first that $c_\theta$ satisfies the equation  
\begin{equation}\label{ode1}-\Psi(q)c'_\theta(q)=\theta c_\theta(q), \quad c_{\theta}(1)=1.
\end{equation}

%One has
% \[\int_{0}^{\infty}e^{-qx}\mu_{\theta}(\ddr x)=c_{\theta}(\lambda)e^{\theta \int_{q}^{\lambda}\frac{\ddr u}{\Psi(u)}}\]
%for some $c_{\theta}(\lambda)>0$. 
Recall $e_{q}:x\mapsto e^{-qx}$ for any $x,q\geq 0$. Since the linear span of exponential functions is a core for the generator $\mathcal{L}$, it  is enough to verify that 
%$\mu_\theta$ is a $\theta$-invariant measure then necessary
%\begin{equation}\label{LTeq} \langle \mu_{\theta}\mathcal{L},1-e_q\rangle:=\int_{0}^{\infty}\mathcal{L}(1-e_q)(x)\mu_{\theta}(\ddr x)=\langle \theta \mu_{\theta},1-e_q \rangle=\theta \int_{0}^{\infty}(1-e^{-qx})\mu_{\theta}(\ddr x)
%\end{equation}
%One has $\mathcal{L}(1-e_q)(x)=-\Psi(q)xe_q(x)$ and \eqref{LTeq} provides 
%\[-\Psi(q)\int_{0}^{\infty}xe^{-qx}\mu_{\theta}(\ddr x)=\theta \int_{0}^{\infty}(1-e^{-qx})\mu_{\theta}(\ddr x)\]
%and if one sets $c_\theta(q):=\int_{0}^{\infty}(1-e^{-qx})\mu_{\theta}(\ddr x)$, we see that it satisfies the equation  
%\[-\Psi(q)\frac{\partial }{\partial q}c_{\theta}(q)=\theta c_{\theta}(q).\]
%Hence for any fixed $\lambda \in (0,\infty)$ and all $q\in [0,\infty]$
%\[c_{\theta}(q)=c_{\theta}(\lambda)e^{-\theta \int_{\lambda}^{q}\frac{\ddr u}{\Psi(u)}}.\]
%\\
%Similarly,
%If $\mu_\theta$ is a $\theta$-invariant measure then necessary
\begin{equation}\label{LTeq} \langle \mu_{\theta}\mathcal{L},e_q\rangle:=\int_{0}^{\infty}\mathcal{L}e_q(x)\mu_{\theta}(\ddr x)=\langle \theta \mu_{\theta},e_q \rangle=\theta \int_{0}^{\infty}e^{-qx}\mu_{\theta}(\ddr x).
\end{equation}
One has on the other hand $\mathcal{L}e_q(x)=\Psi(q)xe_q(x)$ and \eqref{LTeq} is equivalent to
\begin{equation}\label{ode} \Psi(q)\int_{0}^{\infty}xe^{-qx}\mu_{\theta}(\ddr x)=\theta \int_{0}^{\infty}e^{-qx}\mu_{\theta}(\ddr x),
\end{equation}
%if one sets $d_\theta(q):=\int_{0}^{\infty}e^{-qx}\mu_{\theta}(\ddr x)$, 
which holds true by using \eqref{ode1}.
%for any fixed $\lambda \in (0,\infty)$ and all $q\in [0,\infty]$
%\[d_{\theta}(q)=d_{\theta}(\lambda)e^{-\theta \int_{\lambda}^{q}\frac{\ddr u}{\Psi(u)}}.\]
%Unclear! note also that $\mu_{\theta}$ cannot be a finite measure when $\theta>0$ and we should be careful when using Laplace transforms.
\qed
\end{proof}
\begin{remark} In the subcritical or critical cases, the map $c_\theta$ is completely monotone on $(0,\infty)$ and not defined at $0$, the measure $\mu_{\theta}$ is infinite. In the supercritical case, $c_\theta$ is completely monotone \textit{and} well-defined  and right-continuous at $0$, and $\mu_{\theta}$ is finite.
\end{remark}
According to Theorem \ref{invariantfunction}, the map $f_\theta:x\mapsto \mu_{\theta}([0,x))$ is a $\theta$-invariant function for $(\hat{X}_t,t\geq 0)$. 
The following simple calculation provides an expression of the Laplace transform of $f_{\theta}$. 
%set \[\xi_{\theta}(q):=\int_{0}^{\infty}f_{\theta}(x)e^{-qx}\ddr x=\frac{c_{\theta}(\lambda)}{q}e^{\theta \int_{q}^{1}\frac{\ddr u }{\Psi(u)}}.\] 
%\begin{lemma} 
For any $q>0$,
\begin{equation}\label{LTxi}
\xi_{\theta}(q):=\int_{0}^{\infty}f_{\theta}(y)e^{-qy}\ddr y=\int_{0}^{\infty}\int_{0}^{\infty}\mathbbm{1}_{\{u<y\}}e^{-qy}\mu_{\theta}(\ddr u)\ddr y=\frac{1}{q}c_{\theta}(q).
%\\
%&=\int_{0}^{\infty}\frac{e^{-qu}}{q}\mu_{\theta}(\ddr u)=\frac{1}{q}c_{\theta}(q)
%\\
%&=\frac{d_{\theta}(\lambda)}{q} \exp \left(\theta \int_q^{\lambda}\frac{\ddr u}{\Psi(u)}\right).
\end{equation}
%\end{lemma}
Inverting $\xi_{\theta}$ in order to find $f_{\theta}$
does not seem to be feasible in a general setting, however we shall see in the next lemma that $\xi_{\theta}$ has regular variation properties at $0$, Tauberian theorems will then allow us to find an equivalent at $\infty$ of the function $f_{\theta}$ and hence enable us to investigate more precisely the martingale $(e^{-\theta t}f_{\theta}(\hat{X}_t(x)),t\geq 0)$.

\begin{lemma}\label{regularvar} Assume $\Psi'(0+)\neq 0$. The map $R:q\mapsto e^{-\int_1^{q}\frac{\ddr u}{\Psi(u)}}$ is regularly varying at $0$ with index $-1/\Psi'(0+)$. In particular it takes the form $R(q)=q^{-\frac{1}{\Psi'(0+)}}L_{1}(1/q)$, where $L_{1}$ is a slowly varying function at $\infty$. Moreover, for any $\theta>-\Psi'(0+)$,
\begin{equation}\label{equivf}
f_\theta(y)\underset{y\rightarrow \infty}{\sim} y^{\frac{\theta}{\Psi'(0+)}}\frac{1}{\Gamma\left(1+\frac{\theta}{\Psi'(0+)}\right)}L_{1}(y)^{\theta}=\frac{1}{\Gamma\left(1+\frac{\theta}{\Psi'(0+)}\right)}R(1/y)^{\theta}.
\end{equation}
%Under Grey's condition, we have for all $q>0$, $$R_\infty(q)=e^{\int_{\lambda}^{\infty}\frac{\ddr u}{\Psi(u)}}R_{\lambda}(q).$$
\end{lemma}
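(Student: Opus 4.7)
The plan is: (i) use $\Psi(u)\sim\Psi'(0+)u$ as $u\to 0+$ to show by a direct change of variables that the ratio $R(cq)/R(q)$ converges to $c^{-1/\gamma}$, giving the regular variation of $R$ at $0$ with index $-1/\gamma$; (ii) translate this into regular variation of the Laplace transform $\xi_\theta(q)=R(q)^\theta/q$ at $0$; and (iii) invoke Karamata's Tauberian theorem together with the monotone density theorem to extract the asymptotic of $f_\theta$ at $\infty$.

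For step (i), set $\gamma:=\Psi'(0+)$ and for $c>0$ I would rewrite
\[
\frac{R(cq)}{R(q)}=\exp\Big(-\int_q^{cq}\frac{\ddr u}{\Psi(u)}\Big)=\exp\Big(-\int_1^c\frac{q\,\ddr v}{\Psi(qv)}\Big)
\]
via the substitution $u=qv$. Since $\Psi$ is continuous with $\Psi(u)/u\to\gamma$ as $u\to 0+$, on the compact interval with endpoints $1$ and $c$ the integrand $q/\Psi(qv)=(1/v)\cdot(qv/\Psi(qv))$ is bounded uniformly in small $q$ and converges pointwise to $1/(\gamma v)$. Dominated convergence then gives $R(cq)/R(q)\to c^{-1/\gamma}$, showing that $R$ is regularly varying at $0$ with index $-1/\gamma$; the standard representation theorem for regularly varying functions then writes $R(q)=q^{-1/\gamma}L_1(1/q)$ for some slowly varying $L_1$ at $\infty$.

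For steps (ii)--(iii), by \eqref{LTxi} one has
\[
\xi_\theta(q)=\frac{R(q)^\theta}{q}=q^{-(1+\theta/\gamma)}L_1(1/q)^\theta,
\]
and $L_1^\theta$ remains slowly varying for any real $\theta$. When $\theta>-\gamma$ the index $1+\theta/\gamma$ is strictly positive, so the non-decreasing primitive $F(y):=\int_0^y f_\theta(z)\,\ddr z$, whose Laplace--Stieltjes transform coincides with $\xi_\theta$, satisfies by Karamata's Tauberian theorem
\[
F(y)\sim\frac{y^{1+\theta/\gamma}L_1(y)^\theta}{\Gamma(2+\theta/\gamma)}\text{ as }y\to\infty.
\]
Since $f_\theta$ itself is monotone (as the distribution function of the positive measure $\mu_\theta$), the monotone density theorem upgrades this to
\[
f_\theta(y)\sim\frac{(1+\theta/\gamma)\,y^{\theta/\gamma}L_1(y)^\theta}{\Gamma(2+\theta/\gamma)}=\frac{y^{\theta/\gamma}L_1(y)^\theta}{\Gamma(1+\theta/\gamma)},
\]
which rewrites as $R(1/y)^\theta/\Gamma(1+\theta/\gamma)$ and is the desired equivalent.

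The main obstacle is marshalling the Tauberian and monotone-density steps correctly: monotonicity of $f_\theta$ is immediate from the construction $f_\theta(x)=\mu_\theta([0,x))$, but one must match the hypothesis $\theta>-\Psi'(0+)$ to the requirement that $1+\theta/\gamma>0$ in Karamata's theorem, and verify that the explicit form of $\xi_\theta$ meets its regular-variation hypothesis with the correct slowly varying factor. The change-of-variables argument in step (i) is elementary, relying only on the continuity and strict positivity of $u\mapsto\Psi(u)/u$ on a neighborhood of $0$.
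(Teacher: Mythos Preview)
Your proof is correct and follows essentially the same route as the paper: establish regular variation of $R$ at $0$ with index $-1/\Psi'(0+)$, deduce that $\xi_\theta(q)=c_\theta(q)/q$ is regularly varying at $0$ with index $-(1+\theta/\Psi'(0+))$, and then apply the Tauberian theorem together with the monotone density argument (the paper cites Feller~XIII.5, Theorem~4, which packages both steps). The only cosmetic difference is in step~(i): the paper subtracts off $1/(\Psi'(0+)u)$ and changes variables $u\mapsto 1/u$ to exhibit the Karamata representation of $R$ directly, whereas you verify the ratio criterion $R(cq)/R(q)\to c^{-1/\gamma}$ by dominated convergence and then invoke the representation theorem---both are standard and equivalent.
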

\begin{proof} For any $q>0$, 
\begin{align*}
\int_{q}^{1}\frac{\ddr u}{\Psi(u)}&=\int_{q}^{1}\left(\frac{1}{\Psi(u)}-\frac{1}{\Psi'(0+)u}\right) \ddr u+\int_{q}^{1}\frac{\ddr u}{\Psi'(0+)u}\\
&=\int_{1}^{1/q}\left(\frac{1}{u^{2}\Psi(1/u)}-\frac{1}{\Psi'(0+)u}\right) \ddr u.
\end{align*}
Set $\epsilon(u)=\frac{1}{u\Psi(1/u)}-\frac{1}{\Psi'(0+)}$ for $u>0$ and notice that $\epsilon(u)\underset{u\rightarrow \infty}{\longrightarrow} 0$. One has \[R(q)=q^{-\frac{1}{\Psi'(0+)}}\exp\left(\int_{1}^{1/q}\frac{\epsilon(u)}{u}\ddr u\right)\]
and by Karamata's representation theorem, we see that $L_{1}(x):=\exp\left(\int_{1}^{x}\frac{\epsilon(u)}{u}\ddr u\right)$ is slowly varying at $\infty$. Recall $c_\theta(q)=e^{-\theta \int_1^{q}\frac{\ddr u}{\Psi(u)}}$. By \eqref{LTxi}, $\xi_\theta: q\mapsto\frac{c_{\theta}(q)}{q}$ is regularly varying at $0$ with index $\rho=-1-\frac{\theta}{\Psi'(0+)}$. The Tauberian theorem with monotone density, see e.g. \cite[Chapter XIII.5, Theorem 4]{MR0270403}, provides \eqref{equivf}.\qed
\end{proof}

From now on, we focus on the subcritical case $\Psi'(0+)>0$. The next lemma provides an almost sure convergence of the process for fixed $x$ and a representation of the limit $\hat{W}^{\lambda}(x) $
in terms of the first passage time. 
\begin{lemma}\label{asconvlambda} Let $\lambda>0$. For any $x>0$, and $y_0>x$, the following almost sure convergence holds: 
\begin{equation}\label{identity} v_t(\lambda)\hat{X}_t(x)\underset{t\rightarrow \infty}{\longrightarrow} \hat{W}^{\lambda}(x):=c(\lambda,y_0)e^{-\Psi'(0+)\hat{T}^x_{y_0}},
\end{equation}
where $c(\lambda,y_0)$ is a constant independent of $x$. 

Moreover, for any $\lambda>0$ and $x<y<y_0$,
\begin{equation}\label{ratioW}\frac{\hat{X}_t(x)}{\hat{X}_t(y)}\underset{t\rightarrow \infty}{\longrightarrow} \frac{\hat{W}^{\lambda}(x)}{\hat{W}^{\lambda}(y)}=e^{-\Psi'(0+)(\hat{T}^{x}_{y_0}-\hat{T}^{y}_{y_0})}=e^{-\Psi'(0+)\hat{T}^{x}_{y}} \text{ a.s}.
\end{equation}
%Moreover, for any $x$, and $\lambda\neq \lambda'$, $\hat{W}^{\lambda}(x)=c_{\lambda,\lambda'}\hat{W}^{\lambda'}(x)$ almost surely for some constant $c_{\lambda,\lambda'}>0$, and for any $\lambda>0$ and $x<y<y_0$,
%\begin{equation}\label{ratioW}\frac{\hat{X}_t(x)}{\hat{X}_t(y)}\underset{t\rightarrow \infty}{\longrightarrow} \frac{\hat{W}^{\lambda}(x)}{\hat{W}^{\lambda}(y)}=e^{-\Psi'(0+)(\hat{T}^{x}_{y_0}-\hat{T}^{y}_{y_0})}=e^{-\Psi'(0+)\hat{T}^{x}_{y}} \text{ a.s}.
%\end{equation}
%towards some positive random variable $$. 
%We denote its limit by
%\underset{t\rightarrow \infty}{\longrightarrow} 
\end{lemma}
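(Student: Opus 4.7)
The plan is to deduce the almost sure convergence from a martingale provided by Theorem \ref{invariantfunction}, translate it into convergence of $v_t(\lambda)\hat{X}_t(x)$ via the regular variation from Lemma \ref{regularvar}, and finally factorise the limit via the strong Markov property at $\hat{T}^x_{y_0}$.

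First I would take $\theta := \Psi'(0+)>0$ in Theorem \ref{invariantfunction}: the function $f_\theta(z) := \mu_\theta([0,z))$ is a $\theta$-invariant function for $\hat{X}$, so $M_t := e^{-\Psi'(0+) t} f_\theta(\hat{X}_t(x))$ is a non-negative martingale and thus converges almost surely to some $M_\infty$. Because $\theta/\Psi'(0+)=1$, Lemma \ref{regularvar} gives $f_\theta(z)\sim \Phi(z) := R(1/z)^{\Psi'(0+)}$ as $z\to\infty$, where $\Phi$ is regularly varying of index $1$; combined with the transience $\hat{X}_t(x)\to\infty$ a.s.\ (Proposition \ref{transience}), this yields $e^{-\Psi'(0+)t}\Phi(\hat{X}_t(x))\to M_\infty$ a.s.

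Second, I would convert this into convergence of $v_t(\lambda)\hat{X}_t(x)$. Differentiating \eqref{v} gives the key identity $R(v_t(\lambda)) = R(\lambda)e^t$, which rewrites as $\Phi(1/v_t(\lambda)) = R(\lambda)^{\Psi'(0+)} e^{\Psi'(0+)t}$. Since an asymptotic inverse of a function regularly varying of index $1$ is again of index $1$ (so $\Phi^{-1}(cs)\sim c\Phi^{-1}(s)$ as $s\to\infty$ for every constant $c>0$), inverting the two asymptotic relations produces $\hat{X}_t(x)\sim M_\infty\Phi^{-1}(e^{\Psi'(0+)t})$ on $\{M_\infty>0\}$ and $1/v_t(\lambda)\sim R(\lambda)^{\Psi'(0+)}\Phi^{-1}(e^{\Psi'(0+)t})$; dividing yields $v_t(\lambda)\hat{X}_t(x)\to M_\infty/R(\lambda)^{\Psi'(0+)}=:\hat{W}^\lambda(x)$ a.s.\ (the limit is trivially $0$ on $\{M_\infty=0\}$).

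Third, to identify the limit with $c(\lambda,y_0)e^{-\Psi'(0+)\hat{T}^x_{y_0}}$, I would apply the strong Markov property at $\hat{T}^x_{y_0}$. The absence of positive jumps for $\hat{X}$ forces $\hat{X}_{\hat{T}^x_{y_0}}(x)=y_0$ almost surely, and the composition property of the flow ensures that the shifted trajectory $s\mapsto \hat{X}_{\hat{T}^x_{y_0},\hat{T}^x_{y_0}+s}(y_0)$ is an ancestral lineage started from $y_0$, independent of $\mathcal{F}_{\hat{T}^x_{y_0}}$. Writing $\tilde{M}_\infty$ for the a.s.\ limit of the corresponding martingale, the factorisation $M_t = e^{-\Psi'(0+)\hat{T}^x_{y_0}}\tilde{M}_{t-\hat{T}^x_{y_0}}$ for $t\geq \hat{T}^x_{y_0}$ passes to the limit as $M_\infty = e^{-\Psi'(0+)\hat{T}^x_{y_0}}\tilde{M}_\infty$. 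Setting $c(\lambda,y_0) := \tilde{M}_\infty/R(\lambda)^{\Psi'(0+)}$ — which is measurable with respect to the post-$\hat{T}^x_{y_0}$ $\sigma$-algebra and so does not depend on $x$ — yields the claimed identity. The ratio \eqref{ratioW} follows by applying this formula to both $x$ and $y$ with the same $y_0$ and cancelling $c(\lambda,y_0)$; the equality $e^{-\Psi'(0+)(\hat{T}^x_{y_0}-\hat{T}^y_{y_0})}=e^{-\Psi'(0+)\hat{T}^x_y}$ a.s.\ is then obtained from the fact that the left-hand side of \eqref{ratioW} is intrinsic to $x,y$ (independent of $y_0$) by letting $y_0\downarrow y$, combined with the strong Markov property at $\hat{T}^x_y$ and the absence of positive jumps.

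I expect the main obstacle to be the third step: while strong Markov guarantees $\tilde{M}_\infty\stackrel{d}{=}M_\infty^{y_0}$, verifying that the specific random variable $c(\lambda,y_0)$ so obtained is intrinsically independent of $x$, together with a clean pathwise justification of the identity $\hat{T}^x_{y_0}-\hat{T}^y_{y_0}=\hat{T}^x_y$ a.s., requires careful use of the coalescing-flow structure and the no-positive-jumps property — the inversion step in the second paragraph, by contrast, is merely a Karamata-type exercise once the regular variation of $\Phi^{-1}$ is invoked.
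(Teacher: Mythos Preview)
Your first two steps are essentially the paper's argument: the positive martingale $e^{-\theta t}f_\theta(\hat X_t(x))$ from Theorem~\ref{invariantfunction}, its almost sure limit, and the Karamata inversion via Lemma~\ref{regularvar}. The only difference is cosmetic --- you take $\theta=\Psi'(0+)$ so that $f_\theta$ has index $1$, whereas the paper takes $\theta=1$ and works with index $1/\Psi'(0+)$; the inversion of the regularly varying function and the identification $R_\lambda(v_t(\lambda))=e^t$ are the same.

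The divergence is in the third step, and there your approach has a genuine gap. The paper does \emph{not} factorise through the strong Markov property. Instead it uses the explicit Laplace transform \eqref{laplacetransformTy}, $\mathbb E_z[e^{-\hat T_{y_0}}]=f_1(z)/f_1(y_0)$, to recognise that the bounded martingale $\mathbb E[e^{-\hat T^x_{y_0}}\mid \mathcal F_t]$ coincides (on $\{\hat T^x_{y_0}>t\}$) with $e^{-t}f_1(\hat X_t(x))/f_1(y_0)$, and then reads off $Z(x)=f_1(y_0)\,e^{-\hat T^x_{y_0}}$; this yields the \emph{deterministic} constant $c(\lambda,y_0)=(f_1(y_0)/\beta(1,\lambda))^{\Psi'(0+)}$ asserted in the lemma. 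Your route, by contrast, produces $c(\lambda,y_0)=\tilde M_\infty/R(\lambda)^{\Psi'(0+)}$ with $\tilde M_\infty$ the martingale limit along the shifted trajectory $s\mapsto \hat X_{\hat T^x_{y_0},\hat T^x_{y_0}+s}(y_0)$. This is a genuine random variable; the reasoning ``it is measurable with respect to the post-$\hat T^x_{y_0}$ $\sigma$-algebra, hence does not depend on $x$'' is invalid, because the stopping time --- and therefore the shifted flow and the $\sigma$-field --- themselves move with $x$. For $x<x'<y_0$ one obtains two distinct random variables $\tilde M_\infty^{(x)}\neq \tilde M_\infty^{(x')}$ with the same law but built from different pieces of the flow. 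So your argument delivers neither a constant nor an $x$-independent quantity, and the identity \eqref{identity} as stated is not established. You correctly anticipated this as the main obstacle; the paper's way around it is precisely to exploit the closed-form formula \eqref{laplacetransformTy} rather than strong Markov.

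For \eqref{ratioW} the paper is also more direct: it simply invokes the pathwise identity $\hat T^x_y+\hat T^y_{y_0}=\hat T^x_{y_0}$ a.s.\ (absence of positive jumps) and cancels in \eqref{identity}, rather than passing to the limit $y_0\downarrow y$.
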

\begin{proof}
We first establish the almost sure convergence towards some random variable $\hat{W}^{\lambda}(x)$. By applying Theorem \ref{invariantfunction}, the process $(e^{-t}f_{1}(\hat{X}_t(x)),t\geq 0)$ is a positive martingale. Therefore, the latter converges almost surely towards some random variable $Z(x)$. Set $\beta(1,\lambda):=\frac{\lambda \xi_{1}(\lambda)}{\Gamma(1+1/\Psi'(0+))}$ and $R_{\lambda}(1/y):=\exp\left(\int_{1/y}^{\lambda}\frac{\ddr u }{\Psi(u)}\right)=\exp\left(\int_{1}^{\lambda}\frac{\ddr u }{\Psi(u)}\right)R(1/y)$. Recall that $\hat{X}_t(x)\underset{t\rightarrow \infty}{\longrightarrow} \infty$ a.s, see Proposition \ref{transience}. By Lemma \ref{regularvar}, 
\[e^{-t}f_{1}\big(\hat{X}_t(x)\big)\underset{t\rightarrow \infty}{\sim} e^{-t}\beta(1,\lambda)R_{\lambda}\big(1/\hat{X}_t(x)\big)\underset{t\rightarrow \infty}{\longrightarrow} Z(x) \text{ a.s.}\]
Hence
\begin{equation}\label{asymptoticequiv}R_{\lambda}\big(1/\hat{X}_t(x)\big)\underset{t\rightarrow \infty}{\sim} \frac{e^{t}Z(x)}{\beta(1,\lambda)}.
\end{equation}
Now, using the fact that $R_\lambda$ is non-decreasing and regularly varying at $0$ with index $-\frac{1}{\Psi'(0+)}$, we see that $R_\lambda^{-1}$ is regularly varying at $\infty$ with index $-\Psi'(0+)$. Taking $R_\lambda^{-1}$ in the asymptotic equivalence \eqref{asymptoticequiv} yields
\[\frac{1}{\hat{X}_t(x)} \underset{t\rightarrow \infty}{\sim} R_{\lambda}^{-1}\left(\frac{e^{t}Z(x)}{\beta(1,\lambda)}\right)\underset{t\rightarrow \infty}{\sim} \left(\frac{Z(x)}{\beta(1,\lambda)}\right)^{-\Psi'(0+)}R_{\lambda}^{-1}(e^{t}).\]
By the definition of $\lambda\mapsto v_t(\lambda)$, see \eqref{v}, one has $R_{\lambda}\big(v_t(\lambda)\big)=e^{t}$. 
This allows us to conclude that
\[v_t(\lambda)\hat{X}_t(x)\underset{t\rightarrow \infty}{\longrightarrow} \hat{W}^{\lambda}(x):=\big(Z(x)/\beta(1,\lambda)\big)^{\Psi'(0+)} \text{ a.s.}\]
%\[\frac{R_{\lambda}\big(1/\hat{X}_t(x)\big)}{R_{\lambda}\big(1/v_t(\lambda)\big)}\underset{t\rightarrow \infty}{\sim} R_{\lambda}\big(1/v_t(\lambda)\hat{X}_t(x)\big)\underset{t\rightarrow \infty}{\longrightarrow} \frac{Z(x)}{\beta(\theta,\lambda)} \text{a.s.}.\]
%Therefore $v_t(\lambda)\hat{X}_t(x)$ converges almost surely towards $Z^{\lambda}(x)$ defined such that $1/Z^{\lambda}(x)=R_{\lambda}^{-1}\big(\frac{Z(x)}{\beta(\theta,\lambda)}\big)$. 
We now explain the relation between $\hat{W}^{\lambda}(x)$ and the first passage time. Note that since the process $(\hat{X}_t(x),t\geq 0)$ is transient, for any $y_0>x$, $\hat{T}^x_{y_{0}}<\infty$ a.s. Let $y_0>y$. It suffices to observe that $Z(x):=\underset{t\rightarrow \infty}{\lim} e^{-t}f_{1}(\hat{X}_t(x))$ coincides with $f_1(y_0)e^{-\hat{T}^x_{y_0}}$. Denote by $(\mathcal{F}_t)_{t\geq 0}$ the natural filtration of $\hat{X}$. Let $(M_t,t\geq 0)$ be the martingale defined by $M_t:=\mathbb{E}[e^{-\hat{T}^x_{y_0}}|\mathcal{F}_t]$ for any $t\geq 0$. Note that $M_t\underset{t\rightarrow \infty}{\longrightarrow} e^{-\hat{T}^x_{y_0}}$ almost surely. Conditional on $\{\hat{T}^x_{y_0}>t\}$, we have that $\hat{T}^x_{y_0}=\hat{T}_{y_0}^{\hat{X}_t(x)}+t$. We get by the strong Markov property
\begin{align*}
M_t&=\mathbb{E}[e^{-\hat{T}^x_{y_0}}|\mathcal{F}_t]=\mathbb{E}_{\hat{X}_t(x)}[e^{-(\hat{T}_{y_0}^{\hat{X}_t(x)}+t)}]=e^{-t}f_{1}(\hat{X}_t(x))/f_{1}(y_0)\underset{t\rightarrow \infty}{\longrightarrow} e^{-\hat{T}^x_{y_0}} \text{ a.s.}
\end{align*}
Hence, $Z(x)=f_1(y_0)e^{-\hat{T}^x_{y_0}}$ a.s, and setting $c(\lambda,y_0):=\big(f_1(y_0)/\beta(1,\lambda)\big)^{-\Psi'0+)}$, we have $\hat{W}^{\lambda}(x)=c(\lambda,y_0)e^{-\hat{T}^x_{y_0}}$ a.s.
%The latter is related to the first passage times $\hat{T}_y^{x}$. 
The relation \eqref{ratioW} is plainly checked using the fact that, since there are no positive jumps, $\hat{T}^{x}_y+\hat{T}^{y}_{y_0}=\hat{T}^{x}_{y_0} \text{ a.s.}$
This achieves the proof of Lemma \ref{asconvlambda}. \qed
\end{proof}
The following lemma is a direct consequence of Lemma \ref{asconvlambda}. 
%In particular, Lemma \ref{lambda} is shown.
%\begin{lemma}\label{ratiolimit}
%For any $\lambda>0$ and $x<y<y_0$,
%\begin{equation}\label{ratioW}\frac{\hat{X}_t(x)}{\hat{X}_t(y)}\underset{t\rightarrow \infty}{\longrightarrow} \frac{\hat{W}^{\lambda}(x)}{\hat{W}^{\lambda}(y)}=e^{-\Psi'(0+)(\hat{T}^{x}_{y_0}-\hat{T}^{y}_{y_0})}=e^{-\Psi'(0+)\hat{T}^{x}_{y}} \text{ a.s}.
%\end{equation}
%\end{lemma}
%\begin{proof}
%The fact that there exists a constant $c_{\lambda,\lambda'}$ such that $\hat{W}^{\lambda}(x)=c_{\lambda,\lambda'}\hat{W}^{\lambda'}(x)$ almost surely is a direct consequence of the identity \eqref{identity} for $\hat{W}^{\lambda}$. Simple calculations show that $c_{\lambda,\lambda'}=e^{\Psi'(0+)\int_{\lambda'}^{\lambda}\frac{\ddr u}{\Psi(u)}}$. The relation \eqref{ratioW} is also directly obtained. 
%\qed
%\end{proof}

\begin{lemma}\label{coincide} For $\lambda>0$ and $x<y$,   $\hat{W}^{\lambda}(x)=\hat{W}^{\lambda}(y)$ almost surely if and only if there exists $T<\infty$ such that $\hat{X}_t(x)=\hat{X}_t(y)$ for all $t\geq T$ a.s.
\end{lemma}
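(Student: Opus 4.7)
The plan is to show that the two events in question agree up to a null set, and both implications follow almost immediately from the representation of $\hat{W}^\lambda$ as an exponential of a first passage time established in Lemma \ref{asconvlambda}. The role of the lemma is essentially to package together the a.s.\ convergence from Lemma \ref{asconvlambda} with the coalescing property of the flow.

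For the direction $(\Leftarrow)$, I would simply observe that if $\hat{X}_t(x)=\hat{X}_t(y)$ for all $t\geq T$ with $T<\infty$, then $v_t(\lambda)\hat{X}_t(x)=v_t(\lambda)\hat{X}_t(y)$ for every $t\geq T$; letting $t\to\infty$ and applying the a.s.\ convergence stated in Lemma \ref{asconvlambda} to both sides gives $\hat{W}^\lambda(x)=\hat{W}^\lambda(y)$.

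The direction $(\Rightarrow)$ is the main point. Fix any $y_0>y$. Applying Lemma \ref{asconvlambda} with this same $y_0$ to both starting points $x$ and $y$ yields
\[
\hat{W}^\lambda(x)=c(\lambda,y_0)e^{-\Psi'(0+)\hat{T}^x_{y_0}},\qquad \hat{W}^\lambda(y)=c(\lambda,y_0)e^{-\Psi'(0+)\hat{T}^y_{y_0}},
\]
with the \emph{same} multiplicative constant $c(\lambda,y_0)$. Hence the equality $\hat{W}^\lambda(x)=\hat{W}^\lambda(y)$ is equivalent to $\hat{T}^x_{y_0}=\hat{T}^y_{y_0}$ a.s. Because $\hat{X}$ has no positive jumps, at its first passage time $\hat{T}^z_{y_0}$ the process $\hat{X}(z)$ is exactly equal to $y_0$; thus on the event in question, $\hat{X}_{\hat{T}^y_{y_0}}(x)=y_0=\hat{X}_{\hat{T}^y_{y_0}}(y)$, so the two lineages meet at time $\hat{T}^y_{y_0}$. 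By the coalescing property of the flow $(\hat{X}_t(z),t\geq 0,z>0)$, they then agree for all later times. Taking $T:=\hat{T}^y_{y_0}$, which is a.s.\ finite by the transience of $\hat{X}(y)$ (Proposition \ref{transience}), gives the desired conclusion.

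The only subtlety is recognising that the constant $c(\lambda,y_0)$ in the representation of $\hat{W}^\lambda(\cdot)$ depends on $y_0$ and $\lambda$ but not on the starting point, so that it cancels cleanly in the ratio; once this is seen, the rest of the argument is a direct invocation of the no-positive-jumps property and the coalescing nature of the flow, both of which are recorded earlier in the paper.
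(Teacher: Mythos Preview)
Your proposal is correct and follows essentially the same approach as the paper: both use the representation $\hat{W}^\lambda(z)=c(\lambda,y_0)e^{-\Psi'(0+)\hat{T}^z_{y_0}}$ from Lemma~\ref{asconvlambda} to reduce $\hat{W}^\lambda(x)=\hat{W}^\lambda(y)$ to $\hat{T}^x_{y_0}=\hat{T}^y_{y_0}$, then invoke the absence of positive jumps and the coalescing property to conclude. Your write-up is somewhat more explicit than the paper's (which simply calls the $(\Leftarrow)$ direction ``trivial'' and leaves the no-positive-jumps justification implicit), but the substance is identical.
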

\begin{proof}
According to Lemma \ref{asconvlambda}, if $\hat{W}^{\lambda}(x)=\hat{W}^{\lambda}(y)$, then for any $y_0>x,y$, $\hat{T}^{y}_{y_0}=\hat{T}^{x}_{y_0}$. This entails that at time $T:=\hat{T}^{x}_{y_0}$, $\hat{X}_T(x)=\hat{X}_{T}(y)$ and thus that $\hat{X}_t(x)=\hat{X}_t(y)$ for all $t\geq T$ a.s. The other implication is trivial. \qed
\end{proof}
The almost sure convergence in Lemma \ref{asconvlambda} holds for fixed $x$.   We now  further investigate the limiting process in the variable $x$ and establish a stronger convergence. % $(\hat{W}^{\lambda}(x),x\geq 0)$. 
\begin{lemma}\label{modification} The process $(\hat{W}^{\lambda}(x),x\geq 0)$ admits a non-decreasing left-continuous modification. Moreover, setting $J^{\lambda}:=\{x>0: \hat{W}^{\lambda}(x+)>\hat{W}^{\lambda}(x)\}$, one has almost surely,
\begin{equation}\label{asforall} \forall x\notin J^{\lambda},\ v_t(\lambda)\hat{X}_t(x)\underset{t\rightarrow \infty}{\longrightarrow} \hat{W}^{\lambda}(x).
\end{equation}
\end{lemma}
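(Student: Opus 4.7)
The plan is to build a non-decreasing left-continuous version of $\hat{W}^\lambda$ from its values on the rationals, using the spatial monotonicity of the inverse flow, and then to transport the convergence to every continuity point via a monotone sandwich.

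First I would apply Lemma~\ref{asconvlambda} at every positive rational $q$ and use that $y\mapsto\hat{X}_t(y)$ is non-decreasing (as the right-continuous generalized inverse of the subordinator $X_{-t,0}(\cdot)$) to pass monotonicity to the limit, obtaining $\hat{W}^\lambda(q_1)\le\hat{W}^\lambda(q_2)$ a.s.\ for rationals $q_1<q_2$. Intersecting countably many a.s.\ events produces an event $\Omega^*$ of full probability on which $v_t(\lambda)\hat{X}_t(q)\to\hat{W}^\lambda(q)$ for every rational $q>0$ and $q\mapsto\hat{W}^\lambda(q)$ is non-decreasing on the positive rationals. Define on $\Omega^*$
\[
\tilde{W}^\lambda(x):=\sup\{\hat{W}^\lambda(q):q\in\mathbb{Q}\cap(0,x)\},\qquad x>0,
\]
which is non-decreasing and left-continuous by construction; its (random, countable) discontinuity set is $J^\lambda$. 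Still on $\Omega^*$, take $x\notin J^\lambda$ and rationals $q_n\uparrow x$, $q'_n\downarrow x$; spatial monotonicity gives $v_t(\lambda)\hat{X}_t(q_n)\le v_t(\lambda)\hat{X}_t(x)\le v_t(\lambda)\hat{X}_t(q'_n)$, and taking $t\to\infty$ along the rationals and then $n\to\infty$ sandwiches the $\liminf$ and $\limsup$ of $v_t(\lambda)\hat{X}_t(x)$ between $\tilde{W}^\lambda(x)$ and $\tilde{W}^\lambda(x+)$, which agree when $x\notin J^\lambda$; this yields \eqref{asforall}.

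To show that $\tilde{W}^\lambda$ is genuinely a modification of the pointwise limit, i.e.\ $\tilde{W}^\lambda(x)=\hat{W}^\lambda(x)$ a.s.\ for each fixed $x>0$, I would exploit the representation $\hat{W}^\lambda(q)=c(\lambda,y_0)e^{-\Psi'(0+)\hat{T}^q_{y_0}}$ from Lemma~\ref{asconvlambda} (with any $y_0>x$), reducing the matter to showing $\hat{T}^{q_n}_{y_0}\to\hat{T}^x_{y_0}$ a.s.\ along rational $q_n\uparrow x$. The sequence $\hat{T}^{q_n}_{y_0}$ is non-increasing by flow monotonicity, so its a.s.\ limit $L$ satisfies $L\ge\hat{T}^x_{y_0}$; the Siegmund duality \eqref{dualitycsbp2} gives $\mathbb{P}(\hat{T}^q_{y_0}\le t)=\mathbb{P}(X_t(y_0)\le q)$, and passing $q\uparrow x$ together with the absence of atoms of $X_t(y_0)$ at the fixed positive level $x$ yields $\mathbb{P}(L\le t)=\mathbb{P}(\hat{T}^x_{y_0}\le t)$, hence $L\stackrel{d}{=}\hat{T}^x_{y_0}$. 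Combined with $L\ge\hat{T}^x_{y_0}$, this forces $L=\hat{T}^x_{y_0}$ a.s.

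The main obstacle is this last step: the no-atom property of $X_t(y_0)$ at the fixed positive level $x$ is precisely what prevents $\hat{W}^\lambda$ from having a deterministic jump at $x$ and thereby secures the modification property; the rest of the argument is a routine interplay of flow monotonicity, a countable-set argument, and the sandwich inequality.
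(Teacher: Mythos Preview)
Your overall architecture matches the paper's: build the modification from values at rationals via spatial monotonicity of the inverse flow, then sandwich to get the uniform almost-sure convergence at continuity points. The gap is in your verification that $\tilde{W}^\lambda$ is a genuine modification.

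The identity you invoke, $\mathbb{P}(\hat{T}^{q}_{y_0}\le t)=\mathbb{P}(X_t(y_0)\le q)$, does not follow from the Siegmund duality \eqref{dualitycsbp2}. That duality gives $\mathbb{P}(\hat{X}_t(q)>y_0)=\mathbb{P}(X_t(y_0)<q)$, which concerns the position of $\hat{X}$ at the fixed time $t$, not its running maximum. The process $(\hat{X}_t(q))_{t\ge 0}$ has no positive jumps, but it is not monotone in $t$ (it may have negative jumps and, in the diffusive case, fluctuates), so $\{\hat{X}_t(q)>y_0\}\subsetneq\{\hat{T}^{q}_{y_0}\le t\}$ in general. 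Hence your distributional comparison $L\stackrel{d}{=}\hat{T}^{x}_{y_0}$ is unjustified, and the step ``same law plus a.s.\ ordering forces equality'' collapses.

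The paper closes this step differently: it uses the ratio identity $\hat{W}^\lambda(q)=\hat{W}^\lambda(x)\,e^{-\Psi'(0+)\hat{T}^{q}_{x}}$ from Lemma~\ref{asconvlambda}, so the question becomes $\hat{T}^{q}_{x}\to 0$ as $q\uparrow x$. This is obtained from the explicit Laplace transform $\mathbb{E}[e^{-\theta\hat{T}^{q}_{x}}]=\mu_\theta([0,q))/\mu_\theta([0,x))$ provided by Theorem~\ref{invariantfunction}, together with the left-continuity of $z\mapsto\mu_\theta([0,z))$; convergence in law plus the monotonicity $q\mapsto\hat{T}^{q}_{x}$ then gives the a.s.\ convergence. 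In short, the hitting-time input you need is already available from Theorem~\ref{invariantfunction} and does not require (and cannot be obtained from) the pointwise duality \eqref{dualitycsbp2}.
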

\begin{proof}
Recall that by Lemma \ref{asconvlambda}, for any fixed $x$,
$v_t(\lambda)\hat{X}_t(x)$ converges almost surely as $t$ goes to $\infty$ towards a random variable denoted by $\hat{W}^{\lambda}(x)$. Consider the almost sure event $\Omega_1$ on which all random variables $(\hat{W}^{\lambda}(q),q\in \mathbb{Q}_+)$ are defined. Since for any $t\geq 0$, the process $(\hat{X}_t(x),x\geq 0)$ is non-decreasing then for any rational numbers $q'\geq q\geq 0$, one has 
$\hat{W}^{\lambda}(q')\geq \hat{W}^{\lambda}(q)$. We work deterministically on  $\Omega_1$ and define for all non-negative real number $x$, \[\tilde{\hat{W}}^{\lambda}(x):=\!\!\underset{q\uparrow x,\atop q\in \mathbb{Q}_+}\lim \! \hat{W}^{\lambda}(q).\] The process $(\tilde{\hat{W}}^{\lambda}(x),x\geq 0)$ is well-defined on $\Omega_1$ and we define it as the null process on $\Omega\setminus \Omega_1$. By construction, $(\tilde{\hat{W}}^{\lambda}(x),x\geq 0)$ is left-continuous. It has right limits since it is non-decreasing. We now show that for any $x\geq 0$, $\mathbb{P}(\hat{W}^{\lambda}(x)=\tilde{\hat{W}}^{\lambda}(x))=1$. By the identity \eqref{ratioW}, on the event $\Omega_1$, we have that for all $q\in \mathbb{Q}_+$ such that $q<x$ 
\begin{equation}\label{WLT}\hat{W}^{\lambda}(q)=\hat{W}^{\lambda}(x)e^{-\Psi'(0+)\hat{T}^{q}_{x}} \text{ a.s}.
\end{equation}
where we recall that $\hat{T}^{q}_{x}:=\inf\{t\geq 0: \hat{X}_t(q)>x\}$. 
We now verify that $\hat{T}^{q}_{x}\underset{q\rightarrow x \atop q<x}{\longrightarrow} 0 $ a.s. Since almost surely for all $q\leq q'$, $\hat{X}_t(q)\leq \hat{X}_t(q')$, then $\hat{T}^{q'}_{x}\leq \hat{T}^{q}_{x}$. Recall the Laplace transform of $\hat{T}^{q}_{x}$ given in Theorem \ref{invariantfunction}, since the function $x\mapsto \mu_{\theta}([0,x))$ is left-continuous, we have that
$\mathbb{E}[e^{-\theta \hat{T}^{q}_{x}}]=\frac{\mu_{\theta}([0,q))}{\mu_{\theta}([0,x))}\underset{q\rightarrow x \atop q<x}{\longrightarrow} 1$; hence $\hat{T}^{q}_{x}\underset{q\rightarrow x \atop q<x}{\longrightarrow} 0 $ a.s. The identity \eqref{WLT}  entails that $\hat{W}^{\lambda}(q)\underset{q\uparrow x, q\in \mathbb{Q}}{\longrightarrow} \hat{W}^{\lambda}(x)$, hence $\hat{W}^{\lambda}(x)=\tilde{\hat{W}}^{\lambda}(x)$ a.s. and $\tilde{\hat{W}}^{\lambda}$ is a left-continuous version of the process $\hat{W}^{\lambda}$. The first statement of the lemma is established.  It remains to see that the almost sure pointwise convergence holds outside the set of jumps $J^{\lambda}$. 
%For all $x$, almost surely $v_t(\lambda)\hat{X}_t(x)\underset{t\rightarrow \infty}{\longrightarrow} \hat{W}^{\lambda}(x).$ 
Almost surely for all $x\notin J^{\lambda}$, one can choose $q'$ and $q$ two rational numbers such that $q'<x<q$. Since $\hat{X}_t(q')\leq\hat{X}_t(x)\leq \hat{X}_t(q)$ for all $t$, one has
\[\tilde{\hat{W}}^{\lambda}(q')\leq \underset{t\rightarrow \infty}{\liminf}\  v_t(\lambda)\hat{X}_t(x)\leq \underset{t\rightarrow \infty}{\limsup}\ v_t(\lambda)\hat{X}_t(x)\leq \tilde{\hat{W}}^{\lambda}(q).\]
Since $\tilde{\hat{W}}^{\lambda}$ has left-continuous paths with right limits and $x\notin J^{\lambda}$, both sides of the inequalities above converge towards the same value $\hat{W}^{\lambda}(x)$ when $q'\uparrow x$ and $q\downarrow x$. This allows us to claim \eqref{asforall}. \qed
\end{proof}
From now on we work with the left-continuous version of $\hat{W}^{\lambda}$ and give up the notation $\tilde{\hat{W}}^{\lambda}$. The next lemma sheds some light on the role of the parameter $\lambda$ and  provides Lemma \ref{lambda}.
%\[v_t(\lambda)\hat{X}_t(x)\underset{t\rightarrow \infty}{\longrightarrow} \hat{W}^{\lambda}(x)\]
%First one sees from Lemma \ref{scalelambda} that for any $\lambda\neq \lambda'$, the processes $(\hat{W}^{\lambda}(x),x\geq 0)$ and $(\hat{W}^{\lambda'}(x),x\geq 0)$ are proportional. 

\begin{lemma}\label{scalelambda} For any $\lambda>0$ and $\lambda'>0$,
\begin{equation}\frac{v_t(\lambda)}{v_t(\lambda')}\underset{t\rightarrow \infty}{\longrightarrow}c_{\lambda,\lambda'}:=e^{\Psi'(0+)\int_{\lambda'}^{\lambda}\frac{\ddr u}{\Psi(u)}}.
\end{equation}
Moreover
$\hat{W}^{\lambda}(x)=c_{\lambda,\lambda'}\hat{W}^{\lambda'}(x)$ for all $x\in (0,\infty)$ almost surely.
\end{lemma}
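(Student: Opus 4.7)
The strategy is to first derive the deterministic limit of $v_t(\lambda)/v_t(\lambda')$ directly from the integral equation \eqref{v} defining $v_t$, and then to deduce the almost sure scaling relation between $\hat{W}^{\lambda}$ and $\hat{W}^{\lambda'}$ by combining this limit with Lemma \ref{asconvlambda}.

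For the first (deterministic) statement, I would start from the identity
\[
\int_{v_t(\lambda)}^{\lambda}\frac{\ddr u}{\Psi(u)}\ =\ t\ =\ \int_{v_t(\lambda')}^{\lambda'}\frac{\ddr u}{\Psi(u)},
\]
which, by rearranging, gives
\[
\int_{v_t(\lambda)}^{v_t(\lambda')}\frac{\ddr u}{\Psi(u)}\ =\ \int_{\lambda}^{\lambda'}\frac{\ddr u}{\Psi(u)}.
\]
Since $\Psi'(0+)>0$, the subcriticality forces $v_t(\lambda),v_t(\lambda')\downarrow 0$ as $t\to\infty$. I would then split the left-hand integral by subtracting and adding the ``principal part'' $1/(\Psi'(0+)u)$:
\[
\int_{v_t(\lambda)}^{v_t(\lambda')}\!\!\left(\frac{1}{\Psi(u)}-\frac{1}{\Psi'(0+)u}\right)\ddr u\ +\ \frac{1}{\Psi'(0+)}\log\frac{v_t(\lambda')}{v_t(\lambda)}\ =\ \int_{\lambda}^{\lambda'}\frac{\ddr u}{\Psi(u)}.
\]
Because $1/\Psi(u)-1/(\Psi'(0+)u)$ is bounded near $0$ (this is the same fact used in Lemma \ref{regularvar} via Karamata representation), the first integral tends to $0$ as $t\to\infty$. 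Hence
\[
\log\frac{v_t(\lambda')}{v_t(\lambda)}\ \underset{t\rightarrow\infty}{\longrightarrow}\ \Psi'(0+)\int_{\lambda}^{\lambda'}\frac{\ddr u}{\Psi(u)},
\]
and exponentiating yields the claimed limit $v_t(\lambda)/v_t(\lambda')\to c_{\lambda,\lambda'}$.

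For the second (almost sure) statement, I would apply Lemma \ref{asconvlambda} twice: for each $x>0$, almost surely $v_t(\lambda)\hat{X}_t(x)\to\hat{W}^{\lambda}(x)$ and $v_t(\lambda')\hat{X}_t(x)\to\hat{W}^{\lambda'}(x)$, both limits being strictly positive. Writing
\[
v_t(\lambda)\hat{X}_t(x)\ =\ \frac{v_t(\lambda)}{v_t(\lambda')}\cdot v_t(\lambda')\hat{X}_t(x)
\]
and passing to the limit using the first part gives $\hat{W}^{\lambda}(x)=c_{\lambda,\lambda'}\hat{W}^{\lambda'}(x)$ a.s.\ for each fixed $x$. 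To upgrade this to an almost sure identity of left-continuous processes, I would first restrict to $x\in\mathbb{Q}_+$ to obtain a single almost sure event on which equality holds simultaneously for all rational $x$, and then use the left-continuity of both $\hat{W}^{\lambda}$ and $\hat{W}^{\lambda'}$ (established in Lemma \ref{modification}) to extend the identity to all $x\in(0,\infty)$ simultaneously.

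The calculations are routine; the only subtlety is verifying that the remainder integral $\int_{v_t(\lambda)}^{v_t(\lambda')}(1/\Psi(u)-1/(\Psi'(0+)u))\,\ddr u$ vanishes in the limit, which is immediate from the fact that its integrand extends continuously to $0$, so this should be the easiest of the final lemmas of the section.
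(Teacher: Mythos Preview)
Your overall strategy is sound and close to the paper's, but the key step in the deterministic part contains a genuine gap. You claim that
\[
\frac{1}{\Psi(u)}-\frac{1}{\Psi'(0+)u}
\]
is bounded near $0$ (``extends continuously to $0$''). This is false in general: writing $\gamma=\Psi'(0+)$, one has
\[
\frac{1}{\Psi(u)}-\frac{1}{\gamma u}=\frac{\gamma u-\Psi(u)}{\gamma u\,\Psi(u)}\sim -\frac{\Psi(u)-\gamma u}{\gamma^{2}u^{2}}\ \underset{u\downarrow 0}{\longrightarrow}\ -\frac{\Psi''(0+)}{2\gamma^{2}}=-\frac{1}{2\gamma^{2}}\Big(\sigma^{2}+\int_{0}^{\infty}x^{2}\pi(\ddr x)\Big),
\]
which equals $-\infty$ whenever $\int_{0}^{\infty}x^{2}\pi(\ddr x)=\infty$. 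The fact invoked from Lemma \ref{regularvar} is only that $\epsilon(1/u)=u/\Psi(u)-1/\gamma\to 0$; your integrand is $\epsilon(1/u)/u$, which is an indeterminate $0/0$.

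The fix is easy and preserves your approach: factor the remainder as
\[
\int_{v_t(\lambda)}^{v_t(\lambda')}\Big(\frac{1}{\Psi(u)}-\frac{1}{\gamma u}\Big)\ddr u=\int_{v_t(\lambda)}^{v_t(\lambda')}\frac{1}{\Psi(u)}\Big(1-\frac{\Psi(u)}{\gamma u}\Big)\ddr u.
\]
Since $\Psi(u)/(\gamma u)\to 1$ as $u\downarrow 0$, for any $\varepsilon>0$ the factor $|1-\Psi(u)/(\gamma u)|<\varepsilon$ once $v_t(\lambda)\vee v_t(\lambda')$ is small; meanwhile $\int_{v_t(\lambda)}^{v_t(\lambda')}\frac{\ddr u}{\Psi(u)}=\int_{\lambda}^{\lambda'}\frac{\ddr u}{\Psi(u)}$ is a fixed constant. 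Hence the remainder is bounded by $\varepsilon\cdot|\text{constant}|$ and tends to $0$.

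For comparison, the paper avoids this splitting altogether. It differentiates in $\lambda$ using $\frac{\ddr}{\ddr\lambda}v_t(\lambda)=\Psi(v_t(\lambda))/\Psi(\lambda)$ to obtain
\[
\frac{v_t(\lambda)}{v_t(\lambda')}=\exp\!\Big(\int_{\lambda'}^{\lambda}\frac{\Psi(v_t(u))}{v_t(u)}\frac{\ddr u}{\Psi(u)}\Big),
\]
and then applies monotone convergence on the \emph{fixed} interval $[\lambda',\lambda]$, since $\Psi(v_t(u))/v_t(u)\uparrow\Psi'(0+)$. This is slightly cleaner because no remainder estimate is needed. Your treatment of the second (almost sure) statement via Lemma \ref{asconvlambda}, restriction to rationals, and the left-continuity from Lemma \ref{modification} is correct and in fact more explicit than the paper's one-line reference to Lemma \ref{modification}.
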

\begin{proof}
%Recall \eqref{v}, one has $\frac{\ddr v_t(\lambda)}{\ddr t}=-\Psi(v_{t}(\lambda))$, with $v_0(\lambda)=\lambda$, for any $\lambda\geq 0$ and $t\geq 0$. Moreover $v_{t+s}(\lambda)=v_{t}\circ v_{s}(\lambda)$ for any $s,t$. 
Since $\Psi'(0+)\geq 0$, $v_t(\lambda)\underset{t\rightarrow \infty}{\longrightarrow} 0$.  Moreover, $\underset{t\rightarrow \infty}{\lim} \! \uparrow \frac{\Psi(v_t(u))}{v_t(u)}=\Psi'(0+)$. Recall that $\frac{\ddr}{\ddr \lambda}v_t(\lambda)=\frac{\Psi(v_t(\lambda))}{\Psi(\lambda)}$. Therefore for any $\lambda\neq \lambda'$,
\begin{align*}
\frac{v_t(\lambda)}{v_t(\lambda')}&=\exp\left(\int_{\lambda'}^{\lambda}\frac{\ddr }{\ddr u}\log v_t(u) \ddr u\right)=\exp\left(\int_{\lambda'}^{\lambda}\frac{\Psi(v_t(u))}{v_t(u)}\frac{\ddr u}{\Psi(u)}\right)
\end{align*}
and by monotone convergence 
%\[
$\frac{v_t(\lambda)}{v_t(\lambda')}\underset{t\rightarrow \infty}{\longrightarrow} \exp \left(\Psi'(0+)\int_{\lambda'}^{\lambda} \frac{\ddr u}{\Psi(u)}\right)$.
%\]
%\end{align*}
%On the one hand, 
%\begin{align*}
%\int_{v_t(\lambda')}^{v_t(\lambda)}\frac{\ddr u}{\Psi(u)}&\underset{t\rightarrow \infty}{\sim}\frac{1}{\Psi'(0+)}\int_{v_t(\lambda')}^{v_t(\lambda)}\frac{\ddr u}{u}=\frac{1}{\Psi'(0+)}\log\left(\frac{v_t(\lambda)}{v_t(\lambda')}\right).\\
%\end{align*}
%On the other hand,
%\begin{align*}
%\int_{v_t(\lambda')}^{v_t(\lambda)}\frac{\ddr u}{\Psi(u)}&=\int_{v_t(\lambda')}^{\lambda'}\frac{\ddr u}{\Psi(u)}+\int_{\lambda'}^{\lambda}\frac{\ddr u}{\Psi(u)}+\int_{\lambda}^{v_t(\lambda)}\frac{\ddr u}{\Psi(u)}\\
%&=t+\int_{\lambda'}^{\lambda}\frac{\ddr u}{\Psi(u)}-t=\int_{\lambda'}^{\lambda}\frac{\ddr u}{\Psi(u)}.
%\end{align*}
%Therefore, $$\frac{v_t(\lambda)}{v_t(\lambda')}\underset{t\rightarrow \infty}{\longrightarrow} e^{\Psi'(0+)\int_{\lambda'}^{\lambda}\frac{\ddr u}{\Psi(u)}}.$$
We see from Lemma \ref{modification} that $\hat{W}^{\lambda}(x)=c_{\lambda,\lambda'}\hat{W}^{\lambda'}(x)$ for all $x\in (0,\infty)$ almost surely.\qed
\end{proof}

\begin{lemma}\label{levymeasure} 
The map $\kappa_\lambda:q \mapsto \e^{-\Psi'(0+)\int_{q}^{\lambda}\frac{\ddr u}{\Psi(u)}}$ is the Laplace exponent of a drift-free subordinator $W^{\lambda}$. Its L\'{e}vy measure, denoted by $\nu_{\lambda}$, is finite  if and only if $\int^{\infty}\frac{\ddr u}{\Psi(u)}<\infty$. 
\end{lemma}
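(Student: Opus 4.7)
The plan is to realise $\kappa_{\lambda}$ as a pointwise limit of Laplace exponents of bona fide subordinators and then invoke a continuity argument. Applying Lemma \ref{scalelambda} with the roles of $\lambda$ and $\lambda'$ played by $q$ and $\lambda$ respectively yields, for every $q>0$,
$$
\lim_{t\rightarrow \infty}\frac{v_{t}(q)}{v_{t}(\lambda)}
=\exp\!\Bigl(\Psi'(0+)\int_{\lambda}^{q}\frac{\ddr u}{\Psi(u)}\Bigr)=\kappa_{\lambda}(q).
$$
For each fixed $t\geq 0$ the map $q\mapsto v_{t}(q)$ is the Laplace exponent of the subordinator $x\mapsto X_{t}(x)$, and dividing by the positive constant $v_{t}(\lambda)>0$ amounts merely to a deterministic linear time change, so $q\mapsto v_{t}(q)/v_{t}(\lambda)$ is again the Laplace exponent of a subordinator.

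To pass to the limit, I would verify that $\kappa_{\lambda}$ is finite on $(0,\infty)$ and vanishes at $0$. Since $\Psi(u)\sim \Psi'(0+)u$ as $u\downarrow 0$, the function $u\mapsto \Psi'(0+)/\Psi(u)-1/u$ is bounded in a neighbourhood of $0$, so $\Psi'(0+)\int_{q}^{\lambda}\ddr u/\Psi(u)=\log(\lambda/q)+O(1)$ as $q\downarrow 0$; in particular $\kappa_{\lambda}(q)\sim c_{0}q$ with $c_{0}>0$, hence $\kappa_{\lambda}(0+)=0$. The class of Laplace exponents of subordinators is closed under pointwise convergence whenever the limit is continuous at $0$ (a direct consequence of Lévy's continuity theorem applied to the probability laws with Laplace transforms $q\mapsto \exp(-v_{t}(q)/v_{t}(\lambda))$ at time $1$), hence there exists a subordinator $W^{\lambda}$ with Laplace exponent $\kappa_{\lambda}$.

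It remains to identify the drift and the total mass of the Lévy measure $\nu_{\lambda}$. The drift equals $\lim_{q\rightarrow \infty}\kappa_{\lambda}(q)/q$. Using $\kappa_{\lambda}(q)=\exp(\Psi'(0+)\int_{\lambda}^{q}\ddr u/\Psi(u))$ for $q>\lambda$ together with the fact that $\Psi'(\infty)>\Psi'(0+)=\gamma$ strictly (the assumption $\pi\neq 0$ or $\sigma>0$ rules out $\Psi$ linear), one gets either $\kappa_{\lambda}(q)\sim c_{\infty}q^{\gamma/\Psi'(\infty)}$ with exponent strictly less than $1$ when $\Psi'(\infty)<\infty$, or $\Psi'(0+)/\Psi(u)=o(1/u)$ and hence $\kappa_{\lambda}(q)=q^{o(1)}$ when $\Psi'(\infty)=\infty$. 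In either case $\kappa_{\lambda}(q)/q\rightarrow 0$, so $W^{\lambda}$ is drift-free. Finally, the total mass of $\nu_{\lambda}$ is $\lim_{q\rightarrow \infty}\kappa_{\lambda}(q)=\exp(\Psi'(0+)\int_{\lambda}^{\infty}\ddr u/\Psi(u))$, which is finite exactly when Grey's condition $\int^{\infty}\ddr u/\Psi(u)<\infty$ holds.

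The only delicate step is the closure argument in the first paragraph; once pointwise convergence of the prelimit Laplace exponents together with continuity at $0$ of the limit is established, the identification of the drift and of $\nu_{\lambda}((0,\infty))$ is elementary from the explicit expression of $\kappa_{\lambda}$.
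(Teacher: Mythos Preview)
Your approach is essentially the same as the paper's: both identify $\kappa_\lambda$ as the pointwise limit of the subordinator exponents $q\mapsto v_t(q)/v_t(\lambda)$ and read off the drift and total L\'evy mass from the behaviour of $\kappa_\lambda(q)$ at infinity. The paper phrases the limit at the level of the rescaled subordinators $y\mapsto X_{-t,0}(y/v_t(\lambda))$ rather than invoking L\'evy's continuity theorem abstractly, but this is cosmetic.

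Two of your intermediate asymptotic claims are, however, stronger than what is true in general. The assertion that $\Psi'(0+)/\Psi(u)-1/u$ is bounded near $0$, and hence $\kappa_\lambda(q)\sim c_0 q$, fails unless the $L\log L$ condition $\int_1^\infty u\log u\,\pi(\ddr u)<\infty$ holds (compare Corollary~\ref{LlogL}). What you actually need is only $\kappa_\lambda(0+)=0$, and this follows immediately from $\int_0^\lambda \ddr u/\Psi(u)=\infty$, which is clear since $\Psi(u)\sim\Psi'(0+)u$ near $0$. Similarly, the claim $\kappa_\lambda(q)\sim c_\infty q^{\gamma/\Psi'(\infty)}$ is too sharp: in general $\kappa_\lambda$ is only regularly varying of index $\gamma/\Psi'(\infty)$ at infinity, with a slowly varying correction. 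The conclusion $\kappa_\lambda(q)/q\to 0$ still follows from regular variation with index strictly less than $1$, or, more directly as in the paper, from the estimate $\int_{\lambda_0}^\theta\bigl(\tfrac{1}{\Psi'(0+)u}-\tfrac{1}{\Psi(u)}\bigr)\ddr u\to\infty$ obtained by fixing any $\delta\in(\Psi'(0+),\Psi'(\infty))$. With these two statements weakened to what you actually use, the proof is complete.
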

\begin{proof}
For any $\theta>0$ and any $y \in (0,\infty)$, since $X_{-t,0}(z)$ has the same law as $X_{0,t}(z)$ for all $z$ and $t\geq 0$, one gets by \eqref{semigroupX} and by applying Lemma \ref{scalelambda} \begin{equation}\label{convergenceLE} \mathbb{E}[e^{-\theta X_{-t,0}\left(\frac{y}{v_t(\lambda)}\right)}]=e^{-y\frac{v_t(\theta)}{v_t(\lambda)}}\underset{t\rightarrow \infty}{\longrightarrow} e^{-y \kappa_\lambda(\theta)}.
\end{equation} 
At any time $t$, the process $(X_{-t,0}(y/v_{t}(\lambda)),y\geq 0)$ is a subordinator, the function $\kappa_\lambda$ is therefore the Laplace exponent of a certain subordinator $(W^{\lambda}(y),y\geq 0)$. We show that there is no drift in the subordinator. Recall $\Psi'(\infty):=\underset{u\rightarrow \infty}{\lim} \frac{\Psi(u)}{u}\in (0,\infty]$. Since the case of linear branching mechanism is discarded, i.e $\Psi(q)\not\equiv bq$, one has by convexity, $\Psi'(\infty)>\Psi'(0+)$. Choose $\delta\in (\Psi'(0+),\Psi'(\infty))$. There exists $\lambda_0$ such that for all $u\geq \lambda_0$, $\frac{\Psi(u)}{u}\geq \delta$ and thus $\frac{1}{\Psi(u)}\leq \frac{1}{\delta u}$. Therefore
$$\underset{\theta \rightarrow \infty}{\lim} \int_{\lambda_0}^{\theta}\left(\frac{1}{\Psi'(0+)u}-\frac{1}{\Psi(u)}\right)\ddr u\geq \int_{\lambda_0}^{\infty}\left(\frac{1}{\Psi'(0+)}-\frac{1}{\delta}\right)\frac{\ddr u}{u}=\infty.$$
One deduces that
\begin{align*}
\frac{\kappa_\lambda(\theta)}{\theta}&=\frac{1}{\lambda}\exp\left(-\Psi'(0+)\int_{\lambda}^{\theta}\left(\frac{1}{\Psi'(0+)u}-\frac{1}{\Psi(u)}\right)\ddr u\right)\underset{\theta \rightarrow \infty}{\longrightarrow} 0,
\end{align*}
which entails that there is no drift. Letting $\theta$ to $\infty$ in $\kappa_{\lambda}(\theta)$, we see that 
$$\underset{\theta \rightarrow \infty}{\lim} \kappa_\lambda(\theta)=\nu_{\lambda}((0,\infty))=e^{\Psi'(0+)\int_{\lambda}^{\infty}\frac{\ddr u}{\Psi(u)}},$$
which is finite if and only if $\int^{\infty}\frac{\ddr u}{\Psi(u)}<\infty$. Therefore the L\'{e}vy measure $\nu_\lambda$ is finite if and only if $\int^{\infty}\frac{\ddr u}{\Psi(u)}<\infty$. \qed
\end{proof}
\begin{lemma}\label{lawofhatW}
The process $(\hat{W}^{\lambda}(x),x\geq 0)$ has the same finite-dimensional law as 
\[((W^{\lambda})^{-1}(x),x\geq 0)\] 
where $W^{\lambda}$ is a c\`adl\`ag subordinator  with Laplace exponent $\kappa_\lambda$ and $$(W^{\lambda})^{-1}(x):=\inf \{y\geq 0: W^{\lambda}(y)\geq x\}.$$
Moreover, if $\int^{\infty}\frac{\ddr u}{\Psi(u)}=\infty$, then the process $\hat{W}^{\lambda}$ has continuous paths almost surely.
\end{lemma}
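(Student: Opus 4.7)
My plan is to identify the finite-dimensional marginals of $(\hat{W}^\lambda(x),x\ge 0)$ with those of $((W^\lambda)^{-1}(x),x\ge 0)$ by combining the almost-sure convergence of Lemma \ref{modification} with the distributional convergence already obtained in the proof of Lemma \ref{levymeasure}, and then to transfer continuity of sample paths from $(W^\lambda)^{-1}$ to $\hat{W}^\lambda$ by an oscillation argument on dyadic partitions. The Siegmund-type identity of Lemma \ref{dualitylemma}, applied jointly at several points, is what couples the two sides.

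\textbf{Finite-dimensional laws.} For $0<x_1<\cdots<x_n$ and $y_1,\ldots,y_n>0$, the a.s.\ identity \eqref{dualitycsbp3} applied in each coordinate yields
\[
\bigcap_{i=1}^{n}\{v_t(\lambda)\hat{X}_t(x_i)>y_i\}=\bigcap_{i=1}^{n}\{X_{-t,0}(y_i/v_t(\lambda))<x_i\}\quad\text{a.s.}
\]
Since $y\mapsto X_{-t,0}(y/v_t(\lambda))$ is a subordinator with Laplace exponent $\theta\mapsto v_t(\theta)/v_t(\lambda)$, which converges pointwise to $\kappa_\lambda$ by Lemma \ref{scalelambda}, its finite-dimensional marginals converge jointly in law to those of the subordinator $W^\lambda$ (by independence of increments and L\'evy's continuity theorem). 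On the other side, the a.s.\ convergence of Lemma \ref{asconvlambda} at each fixed $x_i$ produces a joint a.s.\ limit. Passing to the limit in $t$ at continuity points and then extending by right-continuity of joint survival functions in each $y_i$, I obtain
\[
\mathbb{P}\big(\hat{W}^\lambda(x_i)>y_i,\ \forall i\big)=\mathbb{P}\big(W^\lambda(y_i)<x_i,\ \forall i\big)=\mathbb{P}\big((W^\lambda)^{-1}(x_i)>y_i,\ \forall i\big),
\]
which, since both processes are non-decreasing, determines the finite-dimensional laws.

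\textbf{Continuity and main obstacle.} Under $\int^\infty \ddr u/\Psi(u)=\infty$, Lemma \ref{levymeasure} gives $\nu_\lambda((0,\infty))=\infty$, so $W^\lambda$ is strictly increasing and $(W^\lambda)^{-1}$ is continuous almost surely. To propagate this to $\hat{W}^\lambda$ I fix $K>0$ and consider the dyadic oscillation
\[
\Omega_j:=\max_{0\le k<K2^j}\big(\hat{W}^\lambda((k+1)/2^j)-\hat{W}^\lambda(k/2^j)\big),
\]
which depends only on finitely many values of $\hat{W}^\lambda$ and is non-increasing in $j$. Its law equals that of the analogous quantity built from $(W^\lambda)^{-1}$, which tends to $0$ almost surely since a continuous function on $[0,K]$ is uniformly continuous there. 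Hence $\Omega_j\to 0$ in probability, and the monotonicity in $j$ upgrades this to an a.s.\ limit; combined with left-continuity and monotonicity of $\hat{W}^\lambda$, this forces uniform continuity on every $[0,K]$, and therefore continuity on $[0,\infty)$.

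\textbf{Hardest point.} The delicate step is precisely this last one: only the finite-dimensional distributions of $\hat{W}^\lambda$ and $(W^\lambda)^{-1}$ are known to coincide, so there is no pathwise coupling available, and the monotone dyadic oscillation device is what allows me to convert convergence in distribution of $\Omega_j$ to $0$ into almost-sure convergence, hence into pathwise continuity of $\hat{W}^\lambda$.
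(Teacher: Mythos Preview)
Your argument is correct and follows essentially the same route as the paper: both sides pass the Siegmund identity \eqref{dualitycsbp3} to the limit, using on one hand the almost-sure convergence of Lemma~\ref{asconvlambda} and on the other the finite-dimensional weak convergence of the rescaled subordinators $y\mapsto X_{-t,0}(y/v_t(\lambda))$ to $W^\lambda$ (the paper makes the limit pass through by invoking the no-drift property, so that $\mathbb{P}(W^\lambda(y)=x)=0$, which is slightly cleaner than your continuity-points-plus-right-continuity extension, but both are valid). For the continuity statement, the paper simply asserts in one line that equality of finite-dimensional laws with the continuous process $(W^\lambda)^{-1}$ forces $J^\lambda=\emptyset$; your monotone dyadic-oscillation device makes this implication rigorous and is a useful addition of detail rather than a different idea.
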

\begin{proof}
By independence and stationarity of the increments of $(X_{-t,0}(y/v_{t}(\lambda)),y\geq 0)$, the convergence in law of the one-dimensional marginal
$(X_{-t,0}(y/v_{t}(\lambda)),t\geq 0)$ as $t$ goes to $\infty$ towards $W^{\lambda}(y)$, established in \eqref{convergenceLE}, entails the convergence of the finite-dimensional marginals.  Since $W^{\lambda}$ has no drift, the range of the subordinator $W^{\lambda}$ contains almost surely no fixed point, see \cite[Proposition 1.9-(i)]{Bertoin1999}. Hence for any $x\in (0,\infty)$ and $y\in (0,\infty)$, $\mathbb{P}(W^{\lambda}(y)=x)=0$. The weak convergence of the finite-dimensional marginals of $(X_{-t,0}(y/v_{t}(\lambda),y\geq 0)$ entails thus that for any $0<y_1<\ldots<y_n$ and $0<x_1<\ldots<x_n$
\begin{align*}
&\underset{t\rightarrow \infty}{\lim}\mathbb{P}\big(X_{-t,0}(y_1/v_t(\lambda))\geq x_1,\ X_{-t,0}(y_2/v_t(\lambda))\geq x_2,\ldots, X_{-t,0}(y_n/v_t(\lambda))\geq x_n \big)\nonumber  \\
&=\mathbb{P}\big(W^{\lambda}(y_1)\geq x_1,\  W^{\lambda}(y_2)\geq x_2,\ \ldots, W^{\lambda}(y_n)\geq x_n\big).
\nonumber 
\end{align*}
%By Theorem \ref{cvthm}, for any $x$ and $y$,\[\mathbb{P}(v_t(\lambda)\hat{X}_t(x)\geq y)\underset{t\rightarrow \infty}{\longrightarrow} \mathbb{P}(\hat{W}^{\lambda}(x)\geq y).\]
By applying Lemma \ref{asconvlambda}, the definition of the flow $(\hat{X}_t(x),x\geq 0)$ and the duality relation \eqref{dualitycsbp3},  one gets the following identities 
\begin{align}
&\mathbb{P}\left(\hat{W}^{\lambda}(x_1)\leq y_1,\ \hat{W}^{\lambda}(x_2)\leq y_1,\ \ldots, \ \hat{W}^{\lambda}(x_n)\leq y_n\right) \\
&=\underset{t\rightarrow \infty}{\lim} \mathbb{P}\left(v_t(\lambda)\hat{X}_t(x_1)\leq y_1, v_t(\lambda)\hat{X}_t(x_2)\leq y_2,\ \ldots, v_t(\lambda)\hat{X}_t(x_n)\leq y_n\right)\nonumber  \\
&=\underset{t\rightarrow \infty}{\lim}\mathbb{P}\big(X_{-t,0}(y_1/v_t(\lambda))\geq x_1,\ X_{-t,0}(y_2/v_t(\lambda))\geq x_2,\ldots, X_{-t,0}(y_n/v_t(\lambda))\geq x_n \big)\nonumber  \\
&=\mathbb{P}\big(W^{\lambda}(y_1)\geq x_1,\  W^{\lambda}(y_2)\geq x_2,\ \ldots, W^{\lambda}(y_n)\geq x_n\big)
\nonumber \\
&=\mathbb{P}\left((W^{\lambda})^{-1}(x_1)\leq y_1,\ \ (W^{\lambda})^{-1}(x_2)\leq y_2,\ \ldots, \ (W^{\lambda})^{-1}(x_n)\leq y_n\right).\nonumber 
\end{align}
The fact that when $\int^{\infty}\frac{1}{\Psi(u)}\ddr u=\infty$, there are no jumps, i.e $J^{\lambda}=\emptyset$ a.s, comes from the fact the processes $(W^{\lambda})^{-1}$ and $\hat{W}^{\lambda}$, where $W^{\lambda}$ is a subordinator with infinite L\'{e}vy measure, have the same law. 
%Indeed, if $J^{\lambda }\neq \emptyset$, then $\mathbb{P}(W^{\lambda}(x)=W^{\lambda}(y) \text{ for some } x<y)>0$. The latter contradicts the fact that the L\'evy measure of 
Since there is no drift in $W^{\lambda}$, the sample paths of $\hat{W}^{\lambda}$ are pure singular continuous functions. \qed
\end{proof}
\begin{remark} Recall the expression of $\hat{W}^{\lambda}(x)$ in terms of $\hat{T}^{x}_{y_0}$ given in Lemma \ref{asconvlambda}.  Lemma \ref{lawofhatW} ensures that the process $W^{\lambda}$ defined at any $y\geq 0$ by  \[W^{\lambda}(y):=\inf\{x>0: \hat{W}^{\lambda}(x)>y\}=\inf\left\{x>0: \hat{T}^{x}_{y_0}<\left(\log c(\lambda,y_0)-\log y\right)/\Psi'(0+)\right\}\]
is a c\`adl\`ag subordinator with Laplace exponent $\kappa_{\lambda}$.
%, is a subordinator is to relate it to the first passage time $(\hat{T}_y,y\geq 0)$ 
\end{remark}
\begin{remark} When $\int^{\infty}\frac{\ddr u}{\Psi(u)}=\infty$,  for any time $t>0$, the subordinator $(X_{-t,0}(x),x\geq 0)$ has an infinite L\'evy measure,  the process $(\hat{X}_t(x),x\geq 0)$ is therefore continuous increasing. Since  its limit $(\hat{W}^{\lambda}(x),x\geq 0)$ is continuous, Dini's theorems ensure that the almost sure convergence  \eqref{asforall} holds true locally uniformly. 
\end{remark}

\noindent \textbf{Proof of Theorem \ref{mainthm}} The main theorem is obtained by combining Lemma \ref{modification}, Lemma \ref{lawofhatW} and Lemma \ref{levymeasure}. \qed

%\begin{lemma}\label{scalelambda} For any $\lambda>0$ and $x>0$,
%\[\frac{v_t(\lambda)}{v_t(\lambda')}\underset{t\rightarrow \infty}{\longrightarrow}e^{\Psi'(0+)\int_{\lambda'}^{\lambda}\frac{\ddr u}{\Psi(u)}}.\]
%\end{lemma}
%\begin{proof}
%On the one hand, 
%\begin{align*}
%\int_{v_t(\lambda')}^{v_t(\lambda)}\frac{\ddr u}{\Psi(u)}&\underset{t\rightarrow \infty}{\sim}\frac{1}{\Psi'(0+)}\int_{v_t(1/x)}^{v_t(\lambda)}\frac{\ddr u}{u}=\frac{1}{\Psi'(0+)}\log\left(\frac{v_t(\lambda)}{v_t(\lambda')}\right).\\
%\end{align*}
%On the other hand,
%\begin{align*}
%\int_{v_t(\lambda')}^{v_t(\lambda)}\frac{\ddr u}{\Psi(u)}&=\int_{v_t(\lambda')}^{\lambda'}\frac{\ddr u}{\Psi(u)}+\int_{\lambda'}^{\lambda}\frac{\ddr u}{\Psi(u)}+\int_{\lambda}^{v_t(\lambda)}\frac{\ddr u}{\Psi(u)}\\
%&=t+\int_{\lambda'}^{\lambda}\frac{\ddr u}{\Psi(u)}-t=\int_{\lambda'}^{\lambda}\frac{\ddr u}{\Psi(u)}.
%\end{align*}
%Therefore, $$\frac{v_t(\lambda)}{v_t(\lambda')}\underset{t\rightarrow \infty}{\longrightarrow} e^{\Psi'(0+)\int_{\lambda'}^{\lambda}\frac{\ddr u}{\Psi(u)}}.$$
%\qed
%\end{proof}

\subsection*{Proof of Corollary \ref{LlogL}}
Recall the statement of Corollary \ref{LlogL}. For any $\lambda>0$, by \eqref{v}, one has for any time $t$,
\[\int_{v_t(\lambda)}^{\lambda}\left(\frac{1}{\Psi'(0+)u}-\frac{1}{\Psi(u)}\right)  \ddr u= \frac{1}{\Psi'(0+)}\log\left(\frac{\lambda}{v_t(\lambda)}\right)-t=\frac{1}{\Psi'(0+)}\log\left(\frac{\lambda}{v_t(\lambda)}e^{-\Psi'(0+)t}\right).\]
Recall that $v_t(\lambda)\underset{t\rightarrow \infty}{\longrightarrow} 0$. Therefore, the asymptotics $v_t(\lambda)\underset{t\rightarrow \infty}{\sim} c_{\lambda}e^{-\Psi'(0+)t}$ holds for some constant $c_\lambda>0$  if and only if \begin{equation}\label{integral}\int_{0}^{\lambda}\left(\frac{1}{\Psi'(0+)u}-\frac{1}{\Psi(u)}\right)  \ddr u <\infty.
\end{equation}
One has, when this is the case,  $c_\lambda=\lambda e^{-\Psi'(0+)\int_{0}^{\lambda}\left(\frac{1}{\Psi'(0+)u}-\frac{1}{\Psi(u)}\right)\ddr u}$.
Since $\Psi(u)\underset{u\rightarrow 0}{\sim} \Psi'(0+)u$, the convergence \eqref{integral}  is equivalent to 
%\[
$\int_{0}^{\lambda}\left(\frac{\Psi(u)-\gamma u}{u^{2}}\right) \ddr u<\infty$,
%\]
where we recall that $\gamma=\Psi'(0+)$ is the linear drift in $\Psi$, see \eqref{branchingmechanism}. Simple calculations from the L\'{e}vy-Khintchine form \eqref{branchingmechanism} ensure that  the latter integral converges if and only if $\int^{\infty}u\log u\pi(\ddr u)<\infty$. We refer for instance to the calculations around Proposition 3.14 in \cite{MR2760602}. By Theorem \ref{mainthm}, almost surely for all $x\notin J^{\lambda}$,
\[e^{-\Psi'(0+)t}\hat{X}_t(x)\underset{t\rightarrow \infty}{\longrightarrow } \frac{1}{c_\lambda}\hat{W}^{\lambda}(x).\]
The process $(\frac{1}{c_\lambda}\hat{W}^{\lambda}(x),x\geq 0)$ is the inverse of the subordinator $(W^{\lambda}(c_\lambda x),x\geq 0)$, whose Laplace exponent is $\theta\mapsto c_\lambda\kappa_\lambda(\theta)$. Recall $\kappa_\lambda$, one easily checks that 
\begin{align*}
c_\lambda\kappa_\lambda(\theta)&=\lambda e^{-\Psi'(0+)\left[\int_{0}^{\lambda}\left(\frac{1}{\Psi'(0+)u}-\frac{1}{\Psi(u)}\right)\ddr u-\int_{\lambda}^{\theta}\frac{\ddr u}{\Psi(u)}\right]}
%&=\lambda e^{-\Psi'(0+)\left[\int_{0}^{q}\left(\frac{1}{\Psi'(0+)u}-\frac{1}{\Psi(u)}\right)\ddr u -\int_{\lambda}^{q}\frac{\ddr u}{\Psi'0+)u}\right]}
=\theta e^{-\Psi'(0+)\int_{0}^{\theta}\left(\frac{1}{\Psi'(0+)u}-\frac{1}{\Psi(u)}\right)\ddr u}.
\end{align*}  
\qed
\subsection*{Proof of Proposition \ref{dimension}}
Recall the statement of Proposition \ref{dimension} where $\mathscr{S}$ denotes the support of the random measure $\ddr \hat{W}^{\lambda}$. By Lemma \ref{scalelambda}, $\hat{W}^{\lambda}=c_{\lambda,\lambda'}\hat{W}^{\lambda'}$ almost surely, this entails that $\mathscr{S}$ does not depend on the parameter $\lambda$.  Moreover, we see by Lemma \ref{lawofhatW} that $\mathscr{S}$ is  the range of a subordinator $W^{\lambda}$ with Laplace exponent $\kappa_\lambda$. Having the Laplace exponent of $W^{\lambda}$ at hand, one can directly apply known results on the geometry of the range of a subordinator. By \cite[Theorem 5.1]{Bertoin1999}, for all $x>0$, almost surely 
\[\underline{\mathrm{dim}}_{H}(\mathscr{S}\cap [0,x])=\underline{\mathrm{ind}}(\kappa_\lambda)\text{ and }\overline{\mathrm{dim}}_{H}(\mathscr{S}\cap [0,x])=\overline{\mathrm{ind}}(\kappa_\lambda)\]
with 
\[\underline{\mathrm{ind}}(\kappa_\lambda):=\underset{q\rightarrow \infty}{\liminf}\frac{\log \kappa_\lambda(q)}{\log q} \text{ and } \overline{\mathrm{ind}}(\kappa_\lambda):=\underset{q\rightarrow \infty}{\limsup}\frac{\log \kappa_\lambda(q)}{\log q}.\]
Recall $\kappa_\lambda$, one gets
\[\underline{\mathrm{ind}}(\kappa_\lambda)=\underset{q\rightarrow \infty}{\liminf} \frac{\Psi'(0+)}{\log q} \int_{\lambda}^{q}\frac{\ddr u}{\Psi(u)}.\]
If $\Psi'(\infty):=\underset{u\rightarrow \infty}{\lim} \frac{\Psi(u)}{u}<\infty$, then we see that $\int_{\lambda}^{q}\frac{\ddr u}{\Psi(u)}\underset{q\rightarrow \infty}{\sim} \frac{1}{\Psi'(\infty)}\log q$ and the result is established. If now $\Psi'(\infty)=\infty$, then for any large $D>0$, for large $q$, 
$$\int_{\lambda}^{q}\frac{\ddr u}{\Psi(u)}\leq C_\lambda+\frac{\log q}{D},$$
for some constant $C_\lambda$. We see therefore that 
$\overline{\mathrm{ind}}(\kappa_\lambda)\leq \frac{\Psi'(0+)}{D}$. Since $D$ is arbitrarily large, one can conclude that $\overline{\mathrm{dim}}_{H}(\mathscr{S}\cap [0,x])=\frac{\Psi'(0+)}{\Psi'(\infty)}=0$ a.s. \qed

We now establish Proposition \ref{densityprop} and look for the density of $\hat{W}^{\lambda}(x)$ for fixed $x$.\\

\noindent \textbf{Proof of Proposition \ref{densityprop}}. Let $\lambda>0$ and $\theta>0$. Recall $\kappa_\lambda$ and denote by $\mathbbm{e}_\theta$, $\mathbbm{e}_q$ two independent exponential random variables with parameter $\theta$ and $q$ respectively. By Lemma \ref{lawofhatW}, we have that $\mathbb{P}(W^{\lambda}(\mathbbm{e}_\theta)\geq \mathbbm{e}_{q})=\mathbb{P}(\mathbbm{e}_{\theta}\geq \hat{W}^{\lambda}(\mathbbm{e}_{q}))$, or equivalently
\[\mathbb{E}[e^{-\theta \hat{W}^{\lambda}(\mathbbm{e}_{q})}]=1-\mathbb{E}[e^{-qW^{\lambda}(\mathbbm{e}_\theta)}]. \]
We deduce that
%\begin{align*}
%q\int_{0}^{\infty}\mathbb{E}[e^{-\theta v_t(\lambda) \hat{X}_t(x)}]e^{-qx}\ddr x=\mathbb{E}[e^{-\theta
% \mathbbm{e}_{v_t(q)}}]&=\frac{v_t(q)}{v_t(q)+\theta}=\frac{1}{1+\frac{\theta}{v_t(q)}}.
%\end{align*}
%Let $\lambda>0$, one has
\begin{align*}\label{doubleLaplacetransform}
\int_{0}^{\infty}\mathbb{E}[e^{-\theta \hat{W}^{\lambda}(x)}]e^{-qx}\ddr x&=\frac{1}{q}\left(1-\mathbb{E}[e^{-\kappa_{\lambda}(q)\mathbbm{e}_\theta}]\right)=\frac{1}{q}\left(1-\frac{\theta}{\kappa_{\lambda}(q)+\theta}\right)=\frac{1}{\theta}\frac{\kappa_\lambda(q)}{q}\frac{\theta}{\kappa_\lambda(q)+\theta}\\
&=\frac{1}{\theta}\frac{\kappa_\lambda(q)}{q}\mathbb{E}[e^{-\kappa_\lambda(q)\mathbbm{e}_\theta}]=\frac{1}{\theta}\int_{0}^{\infty}e^{-qu}\bar{\nu}_\lambda(u)\ddr u \int_{0}^{\infty}e^{-qz}\mathbb{P}(W^{\lambda}(\mathbbm{e}_{\theta})\in \ddr z), \nonumber
\end{align*}
where we have used that \[\mathbb{E}[e^{-qW^{\lambda}(\mathbbm{e}_\theta)}]=\mathbb{E}[e^{-\kappa_\lambda(q) \mathbbm{e}_\theta}]
\text{ and } \frac{\kappa_\lambda(q)}{q}=\int_{0}^{\infty}e^{-qu}\bar{\nu}_\lambda(u)\ddr u.\]
By the change of variable $x=u+z$, we obtain 
\begin{align*}\label{limitdoubleLaplacetransform2Nogrey}
\int_{0}^{\infty}\mathbb{E}[e^{-\theta \hat{W}^{\lambda}(x)}]e^{-qx}\ddr x
&=\frac{1}{\theta}\int_{0}^{\infty}e^{-qx}\int_{0}^{\infty}\bar{\nu}_\lambda(x-z)\mathbb{P}(W^{\lambda}(\mathbbm{e}_\lambda)\in \ddr z)
\end{align*}
and deduce that 
%$v_t(\lambda)\hat{X}_t(x)$ converges in law as $t$ goes to $\infty$  towards a random variable $\hat{W}^{\lambda}(x)$ whose Laplace transform is 
\begin{align*} \mathbb{E}[e^{-\theta \hat{W}^{\lambda}(x)}]&=\frac{1}{\theta}\int_{0}^{x}\bar{\nu}_\lambda(x-z)\mathbb{P}(W^{\lambda}(\mathbbm{e}_\theta)\in \ddr z)\\
&=\int_{0}^{\infty}e^{-\theta u} \int_{0}^{x}\bar{\nu}_\lambda (x-z)\mathbb{P}(W^{\lambda}(u)\in \ddr z) \ddr u.
\end{align*}
Thus, the density of $\hat{W}^{\lambda}(x)$ is
%\begin{equation*} 
$g^{\lambda}_x(u):= \int_{0}^{x}\bar{\nu}_\lambda(x-z)\mathbb{P}(W^{\lambda}(u)\in \ddr z).$
%\end{equation*} 
In the case $\int^{\infty}\frac{\ddr u}{\Psi(u)}<\infty$, $(W^{\infty}(u),u\geq 0)$ is a compound Poisson process with intensity $\nu_\infty$, and since $\bar{\nu}_\infty(0)=1$, the formula \eqref{densitygrey} can be plainly checked.\qed
%
%We have not addressed here critical CSBPs. In this case, the process $\hat{X}$ can be either transient or null recurrent. The function $R$ defined in Lemma \ref{regularvar} is not regularly varying at $\infty$ in general but \textit{rapidly varying}. almost sure asymptotic results are therefore more involved and are left for possible future works. We refer the reader to Pakes \cite[Theorem 9, (a)]{pakes2017} for a limit in law in the case of critical discrete branching processes. 

\noindent \textbf{Acknowledgements:} C.F's research is partially  supported by LABEX MME-DII (ANR11-LBX-0023-01). 
%M.M.'s research is supported by {\red{to complete}}.

%\bibliographystyle{amsalpha}
%\bibliography{bibli}

\end{document}